\newtheorem{theorem}{Theorem}
\newtheorem{proposition}[theorem]{Proposition}
\newtheorem{lemma}[theorem]{Lemma}
\newtheorem{corollary}{Corollary}
\newtheorem{remark}{Remark}
\newtheorem{conjecture}{Conjecture}
\newtheorem{problem}{Problem}
\newcommand{\cDD}{\mathcal{D}}
\newcommand{\cP}{\mathcal{P}}
\newcommand{\bR}{\mathbb{R}}
\newcommand{\bC}{\mathbb{C}}
\newcommand{\ee}{\end{equation}}
\newcommand {\al}{\alpha}
\newcommand {\la}{\lambda}
\newcommand{\be}{\beta}
\newcommand {\ga}{\gamma}
\newcommand {\De}{\Delta}
\newcommand{\LP}{\mathcal {L-P}}
\newcommand {\q}{\widetilde q}
\newcommand {\fS} {\mathfrak S}
\newcommand {\bN}{\mathbb N}
\begin{document}
\title[Hardy-Petrovitch-Hutchinson's problem and partial theta function]{Hardy-Petrovitch-Hutchinson's problem and partial theta function}

\author[V.~Kostov]{Vladimir Petrov Kostov}
\address{Universit\'e de Nice, Laboratoire de Math\'ematiques, 
Parc Valrose, 06108 Nice Cedex 2, France}
\email{kostov@math.unice.fr}

%\author[O.~Katkova]{Olga Katkova}

%\address{Department of Mechanics \& Mathematics, Kharkov National University, 4 Svobody Sq., Kharkov, 61077, Ukraine}
%\email{olga.m.katkova@univer.kharkov.ua}

\author[B. Shapiro]{Boris Shapiro}

\address{   Department of Mathematics,
            Stockholm University,
            S-10691, Stockholm, Sweden}
\email{shapiro@math.su.se}

%\author[A.~Vishnyakova]{Anna Vishnyakova}

%\address{Department of Mechanics \& Mathematics, Kharkov National University, 4 Svobody Sq., Kharkov, 61077, Ukraine}
%\email{anna.m.vishnyakova@univer.kharkov.ua}

\begin{abstract} In 1907 M.~Petrovitch \cite{Pe} initiated the study of a class of entire functions  all whose finite sections  are real-rooted polynomials. He was motivated by  previous studies \cite{La}  of E.~Laguerre on uniform limits of sequences of real-rooted polynomials and an interesting result of G.~H.~Hardy~\cite {Ha}. (The term 'section' meaning truncation although quite inappropriate seems to be  standard in the literature on this topic.)  An explicit description of this class in terms of the coefficients of a series is impossible since it is determined by an infinite number of determinantal inequalities one for each degree. However, interesting necessary or sufficient conditions can be formulated. In particular, J.~I.~Hutchinson \cite {Hu}  has shown that  an entire function  $p(x)=a_0+a_1x+...+a_nx^n+...$ with strictly positive coefficients has the property that any its finite segment $a_ix^i+a_{i+1}x^{i+1}+...+a_jx^{j}$ has all real roots if and only if for all $i=1,...,n-1$ one has $\frac{a_i^2}{a_{i-1}a_{i+1}}\ge 4$. In the present paper we give sharp lower bounds on the ratios $\frac{a_i^2}{a_{i-1}a_{i+1}},\;i=1,2, ... $ for the class considered by M.~Petrovitch.   In particular, we  show that the limit of these minima when $i\to \infty$ equals the inverse of the maximal positive value of the parameter for which the classical 
partial theta function belongs to the Laguerre-P\'olya class. We also explain the relation between Newton's and Hutchinson's inequalities and the logarithmic image of the set of all real-rooted polynomials with positive coefficients.

\end{abstract}
\dedicatory{To the memory of Vladimir Igorevich Arnold} 

\date{\today}
\keywords{real-rooted polynomias, section-hyperbolic polynomials, Laguerre-P\'olya class} 
\subjclass [2010] {Primary 30C15;  Secondary  12D10, 26C05}

\maketitle

\section{Introduction} 

In what follows we will use the terms 'real-rooted polynomial' and 'hyperbolic polynomial' as synonyms. 'Hyperbolic polynomials'  are  inherited from partial differential equations.    
Consider the space $\cP_n$ of polynomials of the form $p(x)=1+a_1x+...+a_nx^n$ with real coefficients. A polynomial $p(x)\in \cP_n$ with all positive coefficients is called {\em section-hyperbolic} if for all $i=1,...,n$ its section $1+a_1x+...+a_ix^i$ of degree $i$ is hyperbolic. %(Notice that $p_n(x)=p(x)$). 
Let $\De_n\subset \cP_n$ be the set of all section-hyperbolic polynomials of degree $n$ and let $\Delta=\bigcup_n\Delta_n$ be the set of all section-hyperbolic polynomials. (Notice that by a result of E.~Laguerre \cite {La} 
a formal power series whose sections belong to $\Delta$ is an entire function lying in the Laguerre-P\'olya class.)   We will call an entire function with positive coefficients all whose sections belong to $\Delta$ {\em section-hyperbolic}. The  following question was posed to the second author in April 2010 by Professors O.~Katkova and A.~Vishnyakova   who  attributed it to Professor I.~V.~Ostrovskii, comp. \cite{Ost}, \cite {Zh}.  

\begin{problem}[Hardy-Hutchinson-Petrovitch-Ostrovskii\footnote{The order of the names is chronological in accordance with the years of their contributions to the topic under consideration. According to late Vladimir Igorevich Arnold (whose students we both were)  if a mathematical concept is named after some mathematician this  in most of the cases is due to the fact that this person has {\bf never}Ê considered this particular concept.}]

For a given positive integer $i$ find or estimate  
$$m_i=inf_{p\in \De} \frac{a_i^2}{a_{i-1}a_{i+1}}.$$
\end{problem}Ê

Denote by $Pol_n$ the space of all monic real polynomials of degree $n$ and by $\Sigma_n\subset Pol_n$ the set of all such polynomials having all real and negative roots. Given a polynomial 
$p(x)=a_0+a_1x+...+a_nx^n,\; a_n\neq 0$  define its reverted polynomial  as $P(x)=a_0x^n+a_1x^{n-1}+...+a_n=x^np(1/x)$.  
Obviously,  reversion  is a diffeomorphism between $\cP_n$ and $Pol_n$. We say that a polynomial $P(x)=x^n+a_1x^{n-1}+...+a_n$ is {\em reverted section-hyperbolic} if its sections $x^n+...+a_ix^{n-i}$ from the back are hyperbolic for all $i=1,...,n$. 
Reversion  sends diffeomorphically the set $\Delta_n\subset \cP_n$ of all section-hyperbolic polynomials onto the set of all reverted section-hyperbolic polynomials. By a slight abuse of notation we denote the latter set by $\Delta_n\subset Pol_n$ and will freely use both interpretations. (In fact, the second interpretation can be already found in the original paper \cite{Pe}.) %In what follows we will mainly use the reverted interpretation of $\Delta_n\subset Pol_n$. 
Observe that the quantities $\frac{a_i^2}{a_{i-1}a_{i+1}}$ 
are preserved by  reversion  (up to a change of indices). 

Notice that since the natural projection $\pi_i: \Delta_{i+1}\to \Delta_i$  'forgetting' the leading monomial is surjective one has 
$$m_i=inf_{p\in \De} \frac{a_i^2}{a_{i-1}a_{i+1}}=inf_{p\in \De_{i+1}} \frac{a_i^2}{a_{i-1}a_{i+1}},$$
i.e., to determine $m_i$ it suffices to consider $\Delta_{i+1}$.  
Moreover, by Hutchinson's theorem, see \cite{Hu}, one has $m_i\le 4$. (Hutchinson's theorem was rediscovered 70 years later in \cite{Ku}.) Apparently Petrovitch knew that 
$m_1=4,\;m_2=\frac{27}{8}=3.375$ and $m_3\approx 3.264$, see pp. 42--43 of \cite {Pe} but he made no explicit claim.  On p.~331 of \cite{Hu}  Hutchinson writes  that the sequence  $\{m_i\}$ should be strictly decreasing to some unknown limit $m_\infty$ of which it is known that it should exceed $ 2$. 

The next  result was proved by O.~Katkova and A.~Visnyakova around 2006. 

\begin{theorem}\label{th:est} For a given positive integer $i$ one has 
$$m_i\ge 3.$$
\end{theorem}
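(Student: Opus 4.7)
The plan is to parameterize $\De_{i+1}$ by the roots of the polynomial and convert the desired inequality into a statement about power sums of those roots. Since by the remark in the text it suffices to consider degree exactly $i+1$, write $p(x)=\prod_{j=1}^{i+1}(1+\alpha_j x)$ with $\alpha_j>0$, set $\beta_j=1/\alpha_j$, and let $P_k=\sum_j\beta_j^k$. A direct computation of elementary symmetric functions gives
$$
\frac{a_i^2}{a_{i-1}a_{i+1}}=\frac{(\sum_j\beta_j)^2}{\sum_{j<k}\beta_j\beta_k}=\frac{2P_1^2}{P_1^2-P_2},
$$
so $m_i\ge 3$ is equivalent to the inequality $3P_2\ge P_1^2$ for every section-hyperbolic $p$.

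The first observation is that the hyperbolicity of the top section alone is insufficient. Cauchy--Schwarz gives $P_2\ge P_1^2/(i+1)$, which is equivalent to Newton's inequality at the top coefficient of $p_{i+1}$ and yields only $m_i\ge 2(i+1)/i$. This is strong enough for $i\le 2$ (and recovers the known $m_1=4$, $m_2\le 27/8$ on the side of upper bounds) but tends to $2$ as $i\to\infty$. Hence for $i\ge 3$ the hyperbolicity of the \emph{lower} sections must be invoked.

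The main step is therefore to exploit the section $p_i(x)=p(x)-a_{i+1}x^{i+1}$, which is itself hyperbolic with positive coefficients and factors as $\prod_{k=1}^i(1+\gamma_k x)$ for some $\gamma_k>0$. This gives the coupling identities
$$
e_{i-1}(\alpha_1,\dots,\alpha_{i+1})=e_{i-1}(\gamma_1,\dots,\gamma_i),\qquad e_i(\alpha_1,\dots,\alpha_{i+1})=\prod_{k=1}^i\gamma_k,
$$
tying the two root systems to the same middle pair $(a_{i-1},a_i)$. I would combine these with Newton's inequality for $p_i$ at its top position, namely $a_{i-1}^2\ge \tfrac{2i}{i-1}a_{i-2}a_i$, and iterate down the chain of hyperbolic sections $p_{i},p_{i-1},\dots,p_2$, aiming to eliminate the intermediate coefficients $a_1,\dots,a_{i-2}$ and obtain a pure inequality among $a_{i-1},a_i,a_{i+1}$.

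The principal obstacle is structural: the quantity being bounded involves $a_{i+1}$, which appears \emph{only} in the top section $p_{i+1}$, whereas the extra constraints one wishes to invoke come from \emph{lower} sections that do not see $a_{i+1}$ at all; so some nontrivial elimination must be performed to couple these two pieces of information. The extremal case $m_2=27/8$, attained at a polynomial whose degree-$2$ section has a double root, suggests a compactness/Lagrange-multiplier attack: one shows that at any minimizer of $a_i^2/(a_{i-1}a_{i+1})$ on $\De_{i+1}$ several of the constraining sections have coalescing roots, reducing the problem to an explicit finite-dimensional optimization whose value one then checks to be at least $3$ uniformly in $i$. Carrying this reduction out cleanly, without getting lost in the combinatorics of which constraints are active at the extremum, is where I expect the real work to lie.
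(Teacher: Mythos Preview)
Your proposal is not a proof: it correctly reformulates the target as $3P_2\ge P_1^2$ for the inverse roots of $p_{i+1}$, correctly observes that the naive Cauchy--Schwarz bound $P_2\ge P_1^2/(i+1)$ only recovers Newton's inequality and is too weak, and then stops at a vague plan (coupling lower sections, compactness, Lagrange multipliers) without carrying out a single step of the elimination you describe. The compactness/extremal route you sketch is in fact the content of Theorem~\ref{th:main}, which is considerably harder than Theorem~\ref{th:est} and is proved later in the paper; it is not needed here.

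The idea you are missing is a \emph{different} Cauchy--Schwarz, applied not once but to the roots of \emph{each} section $r_k$, $k=3,\dots,n$: with the vectors $(|x_j^{(k)}|^{1/2})_j$ and $(|x_j^{(k)}|^{3/2})_j$ one gets $P_1^{(k)}P_3^{(k)}\ge (P_2^{(k)})^2$, which via Newton's identities involves \emph{four} consecutive coefficients $a_{k-3},a_{k-2},a_{k-1},a_k$ and yields, after setting $\gamma_k=a_{k-1}/a_k$ and $\delta_k=\gamma_k/\gamma_{k-1}=a_{k-1}^2/(a_{k-2}a_k)$, the recursion
\[
\delta_k\delta_{k-1}-4\delta_{k-1}+3\ \ge\ 0\qquad(k=3,\dots,n).
\]
From the base case $\delta_2\ge 4$ (hyperbolicity of the quadratic section) one then argues by contradiction: if $j$ is the least index with $\delta_j<3$, then $\delta_{j-1}\ge 3>\delta_j$, and plugging into the recursion forces $\delta_j^2-4\delta_j+3>0$, i.e.\ $\delta_j<1$ or $\delta_j>3$; Newton's inequality for the hyperbolic section $r_j$ rules out $\delta_j<1$. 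That is the whole proof. Your attempt used $P_2\ge P_1^2/n$ on the top section only; the paper uses $P_1P_3\ge P_2^2$ on \emph{all} sections simultaneously, turning the problem into a one-line induction on the $\delta_k$.
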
 

\begin{remark}\label{rm:1} \rm {Observe that the quantities $\frac{a_i^2}{a_{i-1}a_{i+1}}$ can attain arbitrarily large values on $\De_n$ so it is their minimal values which are important. Also notice that  if one instead of $\De_n$ considers these quantities on the set $\Sigma_n$ of all monic polynomials with negative roots, then the famous Newton's inequalities claim that for any given $i=1,...,n-1$ one has 
\begin{equation}\label{eq:Newton}
\frac{a_i^2}{a_{i-1}a_{i+1}}\ge \frac{(n-i+1)(i+1)}{(n-i)i}>1, 
\end{equation} 
see e.g. \cite {Ni} and comp.  Proposition~\ref{prop:log. convexity}.
Moreover, the equality in \eqref{eq:Newton}  is attained exactly on polynomials with all real and coinciding roots. In particular, for appropriate choices of $n$ and $i$  the quantity ${a_i^2}/{a_{i-1}a_{i+1}}$ can be arbitrarily close to $1$.} 

\end{remark} 

Below we  solve the problem of Hardy-Petrovitch-Hutchinson-Ostrovskii by presenting 'explicitly'  the entire function which simultaneously  realizes all the above $m_i$. Namely, consider the following sequence of polynomials 
$p_1(x)=1+x,\; p_2(x)=1+x+x^2/4,\; p_3(x)=1+x+x^2/4+x^3/54,\; p_4(x)=1+x+x^2/4+x^3/54+(69-11\sqrt{33})x^4/13824, ...$ given by the following inductive procedure
$p_n(x)=p_{n-1}(x)+A_nx^n$, where $A_n$ is the maximal positive number such that $P_n(x)$ is hyperbolic. Denote by $p_\infty(x)=1+x+\sum_{n=2}^\infty A_nx^n$. (This series appears on p. 42 of \cite {Pe} and one can show that $p_\infty$ is an entire function.) 

\medskip  
The first result of this paper is as follows. 

\begin{theorem}Ê\label{th:main} For any positive integer $i$ one has 
$$m_i=inf_{p\in \De} \frac{a_i^2}{a_{i-1}a_{i+1}}= \frac{A_i^2}{A_{i-1}A_{i+1}}\;, $$
where $A_i$ are the above coefficients.  In other words, $m_i$ is attained at $p_{i+1}\in \Delta_{i+1}$ or, equivalently, the function $p_\infty(x)$ minimizes all $m_i$ simultaneously.  Moreover, $p_{i+1} $ is the unique (up to a scaling of the independent variable $x$) polynomial in $\Delta_{i+1}$  
minimizing  the quantity ${a_i^2}/{a_{i-1}a_{i+1}}$. 
\end{theorem}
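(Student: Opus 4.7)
The plan uses a variational / Lagrange-multiplier approach in four steps; the heart is the cascade of Step~3.

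\emph{Step 1 (Reduction and existence).} By the surjectivity of the projections $\pi_i$ noted before the statement, $m_i=\inf_{p\in\De_{i+1}}a_i^2/(a_{i-1}a_{i+1})$. The ratio is invariant under the scaling $x\mapsto\la x$, so after a normalization such as $a_1=1$ one works on an $i$-dimensional slice; Newton's inequality~\eqref{eq:Newton} together with standard compactness (the sections being hyperbolic with positive coefficients forces all $a_j$ to lie in bounded ranges on the slice) produces a minimizer $p^*\in\overline{\De_{i+1}}$.

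\emph{Step 2 (Top section carries a double root).} Let $B(a_1,\dots,a_i)$ denote the largest $a_{i+1}$ for which $1+a_1x+\cdots+a_{i+1}x^{i+1}$ is hyperbolic; on the locus $a_{i+1}=B$ the polynomial has a real double root $x_0$. Since strictly raising $a_{i+1}$ strictly lowers the ratio, at $p^*$ one has $a_{i+1}=B(a_1,\dots,a_i)$.

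\emph{Step 3 (Cascade: every section carries a double root).} Implicit differentiation of $p(x_0)=p'(x_0)=0$ yields the clean formula $\partial_{a_j}B=-x_0^{\,j-i-1}$. Substituting into $F(a_1,\dots,a_i):=a_i^2/(a_{i-1}B)$ makes each partial $\partial_{a_j}F$ with $j\le i-2$ a nonzero multiple of $x_0^{\,j-i-1}$, so $F$ has no critical point in the interior of $\De_i$: the minimizer must lie on the boundary of $\De_i$, meaning some lower section of $p^*$ has a double root. Iterating the same reasoning on each successive stratum, using repeatedly that the discriminant gradient at a simple double root $x_k$ of the degree-$k$ section has the Vandermonde form $\nabla\De_{p_k}\propto(1,x_k,x_k^2,\dots,x_k^k)$ (so that Lagrange combinations of different-degree discriminant gradients are very rigid), one concludes that every section of $p^*$ of degree $k\in\{2,\dots,i+1\}$ must carry a double root.

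\emph{Step 4 (Rigidity and identification).} A polynomial $q(x)=1+b_1x+\cdots+b_{i+1}x^{i+1}$ with positive coefficients, all of whose sections of degree $\ge 2$ carry a double root, is uniquely determined up to $x\mapsto\la x$: after normalizing $b_1=1$, the degree-$2$ condition forces $b_2=1/4=A_2$, and inductively, the discriminant of $p_K(x)+b_{K+1}x^{K+1}$, viewed as a polynomial in $b_{K+1}$ alone, has $A_{K+1}$ as its unique positive value in the hyperbolic range for which the degree-$(K+1)$ section has a double root. Hence $p^*=p_{i+1}$ up to scaling and $m_i=A_i^2/(A_{i-1}A_{i+1})$. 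The principal obstacle is the cascade of Step~3: at each stratum one must rule out the possibility that the active lower-section constraint is of the ``wrong'' degree, and one must check throughout that the double roots encountered remain simple so that the Vandermonde form of $\nabla\De$ applies. For the smallest values of $i$, where the index range $j\le i-2$ is empty, an auxiliary direct comparison of objective values replaces the clean no-critical-point argument.
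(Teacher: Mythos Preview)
Your Step~2 and the formula $\partial_{a_j}B=-x_0^{\,j-i-1}$ are correct, but the proof breaks down in Step~3, and you yourself flag this (``the principal obstacle is the cascade''). The sentence ``iterating the same reasoning on each successive stratum~\ldots one concludes that every section carries a double root'' is not an argument. Concretely: once you know the minimizer sits on some boundary face $\mathcal{D}_k\subset\partial\Delta_i$, you do not know which $k$. On $\mathcal{D}_{i+1}\cap\mathcal{D}_k$ the Lagrange condition reads $\nabla F=\mu\,(x_k,x_k^2,\dots,x_k^k,0,\dots,0)$. For coordinates $j>k$ the right side vanishes, so if $k\le i-3$ the component $j=i-2$ still gives the contradiction $\partial_{a_{i-2}}F\ne 0$. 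But when $k\in\{i-1,i\}$ the only ``free'' components are $\partial_{a_{i-1}}F$ and $\partial_{a_i}F$, and these are \emph{not} nonzero monomials in $x_0$: each is a sum of two terms of opposite sign and nothing you have written forces them to be nonzero, let alone incompatible with a single multiplier. The vague appeal to ``rigidity of Vandermonde combinations'' does not close this gap, and the difficulty compounds as more constraints accumulate. You also need every active double root to be simple for the Vandermonde normal to be valid; that is the content of Lemma~\ref{lm:6} in the paper, which you have not established.

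The paper avoids the cascade altogether. After your Step~2 it parametrizes $\mathcal{D}_n$ as a graph $a_n=a_n(a_2,\dots,a_{n-1})$ over $\Delta_{n-1}^1$ and computes, one by one, the partials $\partial\kappa/\partial a_j$ for $j=2,\dots,n-1$ on this graph, showing each is \emph{strictly negative}. The cases $j<n-2$ and $j=n-2$ are immediate from $\partial a_n/\partial a_j>0$ (proved in Lemma~\ref{lm:simplex}); the delicate case $j=n-1$ is handled by writing $a_n$ and $a_{n-1}$ as elementary symmetric functions of the roots and using Euler's identity for homogeneous functions to bound $2a_n-a_{n-1}\,\partial a_n/\partial a_{n-1}<a_n(2-n/2)\le 0$ for $n\ge 4$. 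Combined with Lemma~\ref{lm:simplex}(iii), which connects any point of $\overline{\Delta_n^1}$ to $P_n$ by a smooth path along which every coordinate is nondecreasing, this yields strict decrease of $\kappa$ along the path and hence both the minimum value and its uniqueness in one stroke. No stratum-by-stratum descent, no Lagrange multipliers, and no need to know in advance that the lower sections of the minimizer are themselves discriminantal.
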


By the above remark one can also conclude that $\frac{a_i^2}{a_{i-1}a_{i+1}}$ attains its minimum  at the monic polynomial $P_{i+1}\in Pol_{i+1}$ which is the reverted polynomial to $p_{i+1}$.   We call the inequalities of the form 
\begin{equation}\label{eq:Petrov}
\frac{a_i^2}{a_{i-1}a_{i+1}}\ge m_i
\end{equation} 
with $m_i$ defined above {\em Petrovitch's inequalities} and the inequalities of the form 
\begin{equation}\label{eq:Hutch}
\frac{a_i^2}{a_{i-1}a_{i+1}}\ge 4
\end{equation}
 {\em Hutchinson's inequalities}, see footnote above.  
Using Theorem~\ref{th:est} one can show  that $m_i$ are algebraic numbers and  calculate them  on computer with  an arbitrary  precision.  Namely, the 10 decimal places of the first 17 $m_i$'s are as follows: $m_1=4,\quad m_2= \frac {27}{8},\quad m_3= \frac{2(69 + 11 \sqrt {33})}{81}\approx 3.2639552867,\quad m_4\approx 3.2403064116,\quad m_5\approx 3.2351101647,\quad 
m_6\approx 3.2339623707,\quad$ $ m_7\approx 3.2337086596,\quad$ $ m_8\approx 3.2336525783,\quad$ $ 
m_9\approx 3.2336401824,\quad$ $m_{10}\approx 3.2336374426,\quad$ $ m_{11}\approx 3.2336368370\quad m_{12}\approx 3.2336367032,\quad m_{13}\approx 3.2336366736,\quad$ $ m_{14}\approx 3.2336366671,\quad$ 
$m_{15}\approx 3.2336366656,\quad$ $ m_{16}\approx 3.2336366653,\quad$ $ m_{17}\approx 3.2336366652.$ 
Notice that for all $i> 17$ all $m_i$ have their first 10 decimal places coinciding with that of $m_{17}$. Further calculations show that the next ten $m_i$ have the same 10 decimal places as $m_{17}$, they are monotone decreasing, and every second time the next decimal position stabilizes. Our next result confirms this behavior. 
\medskip

\begin{theorem}\label{th:monot} The sequence $\{m_i\}, i=1,2,...$ is strictly monotone decreasing. 
\end{theorem}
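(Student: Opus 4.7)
The plan is to use Theorem~\ref{th:main}, which identifies $m_i=A_i^2/(A_{i-1}A_{i+1})$ and characterizes $p_{i+1}$ as the unique (up to $x\mapsto x/\tau$) minimizer of $a_i^2/(a_{i-1}a_{i+1})$ on $\Delta_{i+1}$. The inequality $m_{n+1}<m_n$ is equivalent, after clearing denominators, to $A_{n+2}A_n^3 > A_{n+1}^3 A_{n-1}$; setting $t_0:=A_{n+1}^3 A_{n-1}/A_n^3$ this reads $A_{n+2}>t_0$. Since $A_{n+2}$ is, by construction, the maximal $t>0$ for which $p_{n+1}(x)+tx^{n+2}$ is hyperbolic, the target inequality is equivalent to the polynomial $q(x):=p_{n+1}(x)+t_0 x^{n+2}$ being \emph{strictly} hyperbolic, so that the top coefficient may be pushed slightly past $t_0$ without losing hyperbolicity.

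A direct substitution gives $a_{n+1}^2/(a_n a_{n+2})=A_{n+1}^2/(A_n t_0)=m_n$ at $q$, so the sequence of consecutive ratios of $q$ equals $(m_1,m_2,\dots,m_{n-1},m_n,m_n)$; every lower section of $q$ coincides with the corresponding $p_k\in\Delta_k$, and section-hyperbolicity of $q$ therefore reduces to hyperbolicity of $q$ itself with only simple roots. Consequently, assuming $m_{n+1}\ge m_n$, the polynomial $q$ either fails to be hyperbolic (if $t_0>A_{n+2}$) or coincides with $p_{n+2}$ and has a double root (if $t_0=A_{n+2}$).

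I would argue by contradiction under the hypothesis $m_{n+1}\ge m_n$, using the uniqueness clause of Theorem~\ref{th:main}. In this case $p_{n+2}$ simultaneously minimizes both $a_n^2/(a_{n-1}a_{n+1})$ (since its first $n+2$ coefficients coincide with those of $p_{n+1}$, so this ratio at $p_{n+2}$ equals $m_n$) and $a_{n+1}^2/(a_n a_{n+2})$ on $\Delta_{n+2}$. By the uniqueness-up-to-$x$-scaling statement, the minimizer set of the latter ratio is a single scaling orbit through $p_{n+2}$, along which the ratio is constant; any polynomial in $\Delta_{n+2}$ off this orbit yields a strictly larger value of $a_{n+1}^2/(a_n a_{n+2})$. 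The main obstacle---the technical heart of the proof---is to construct a polynomial $\tilde p\in\Delta_{n+2}$ that moves transversely off the scaling orbit of $p_{n+2}$ yet satisfies $a_{n+1}^2/(a_n a_{n+2})(\tilde p)<m_{n+1}$, contradicting this uniqueness. A natural candidate is the two-parameter family obtained by perturbing the coefficients of $x^{n+1}$ and $x^{n+2}$ along the joint double-root locus of the $(n+1)$- and $(n+2)$-sections; showing that this locus is nontrivially transverse to the scaling orbit, and that the perturbation strictly decreases the objective at first order in the degenerate case $m_{n+1}=m_n$, reduces to verifying the non-degeneracy of a $2\times 2$ Jacobian at the corresponding discriminant variety, which encodes the transversality of the two consecutive extremality constraints.
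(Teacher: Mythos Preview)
Your reformulation is correct and useful: $m_{n+1}<m_n$ is equivalent to $A_{n+2}>t_0:=A_{n+1}^3A_{n-1}/A_n^3$, and the computation $a_{n+1}^2/(a_na_{n+2})=m_n$ at $q=p_{n+1}+t_0x^{n+2}$ is right, as is the observation that all proper sections of $q$ coincide with the $p_k$. But the third paragraph is not a proof. The stated target---to produce $\tilde p\in\Delta_{n+2}$ with $a_{n+1}^2/(a_na_{n+2})(\tilde p)<m_{n+1}$---is impossible by the very definition of $m_{n+1}$. Even read charitably as ``find a second minimizer off the scaling orbit of $p_{n+2}$'', you do not carry out the construction: the sentence ``reduces to verifying the non-degeneracy of a $2\times2$ Jacobian'' is exactly where all the content would sit, and you neither compute that Jacobian nor argue why its nondegeneracy delivers the conclusion. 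There is also a concrete obstruction to the perturbation you sketch: the $(n{+}1)$-section of $p_{n+2}$ is $p_{n+1}$, which already lies on the discriminant, so increasing $A_{n+1}$ exits $\Delta_{n+2}$ immediately, while decreasing it moves the ratio the wrong way.

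The paper supplies precisely the missing ingredient, but by an explicit construction rather than a transversality argument. On the reverted side it builds $R^*=P_i+\varepsilon Q^*$ of degree $i{+}1$, where $Q^*$ is the unique monic polynomial whose $k$th back-truncation vanishes at the double root of $P_i^{(k)}$ for every $k\le i{-}2$ (a triangular linear system in the coefficients of $Q^*$). This forces \emph{every} truncation of $R^*$ to be strictly hyperbolic for small $\varepsilon>0$, so $R^*\in\Delta_{i+1}$, and the relevant ratio at $R^*$ tends to $m_{i-1}$ as $\varepsilon\to0$; hence $m_i\le m_{i-1}$. Strictness then does use your uniqueness observation, but only after rescaling $R^*$ into $Pol^1_{i+1}$ and noting that the rescaled family stays bounded away from $P_{i+1}$ (its lower coefficients tend to $0$), together with the monotone path to $P_{i+1}$ from Lemma~\ref{lm:simplex} and the proof of Theorem~\ref{th:main} along which the ratio strictly decreases. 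In short, the step you label ``the technical heart'' is exactly what is absent; what is needed is an actual family of competitors in $\Delta_{i+1}$ approaching the value $m_{i-1}$, and the paper provides one.
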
 

From Theorems~\ref{th:est} and ~\ref{th:monot} Ê we get that  the sequence $\{m_i\}$ has a limit which we denote by $m_\infty$.  The main result of this paper is the explicit description of $m_\infty$ and the related entire function.  To do this we define the  formal power series $\Psi(q,u)$ in the variables $(q,u)$ which we by a small abuse of notation call the {\em partial theta function} 
\begin{equation}\label{eq:Psi}
\Psi(q,u)=\sum_{j=0}^\infty q^{\binom {j+1}{2}}u^j.
\end{equation} 
This function already appears on p. 330 of \cite{Hu}. 

\begin{remark} \rm {The standard partial theta function is usually defined  by the series $\Theta(q,u)=\sum_{j=0}^\infty(-1)^jq^{\binom{j}{2}}u^j$, see e.g. \cite{An1}, \cite{An2}, \cite{Ra} and these two functions satisfy the obvious relation
\begin{equation}\label{eq:psitheta}
\Psi(q,u)=\Theta(q,-qu)
\end{equation}
which allows to translate their properties to one another. 
  A number of  beautiful identities which it satisfies was stated without proofs  in Ramanujan's  "lost" notebook which was found by  G.~E.~Andrews after 50 years and who put  significant effort in proving these identities.  New results about the sum and product of partial theta functions can be found in e.g. \cite{AnWa}. It is also of interest in statistical physics and combinatorics, see \cite{So}. }
\end{remark}

In what follows we consider $q$ as a parameter and $u$ as the main variable. 
One can  easily see that $\Psi(q,u)$ has a positive radius of convergence as a function of $u$ if and only if  $|q| \le 1$. If $|q|=1$, then $\Psi(q,u)$ has a  convergence radius equal to $1$ while  for any such $q$ with $|q|<1$  the function $\Psi(q,u)$ is  entire. Moreover for small positive $q$ the series $\Psi(q,u)$ considered as a function in $u$ belongs to the Laguerre-P\'olya class $\LP^+$, i.e. it has   all real and negative roots, see e.g. \cite{Le}, Ch. 8.  (The well-known  characterization of these functions was obtained almost hundred years ago in \cite{PS}.) Notice that for $\Psi(q,u)$ the quotient $a_j^2/a_{j-1}a_{j+1}=\left(q^{\binom {j}{2}}\right)^2/q^{\binom{j}{2}}q^{\binom {j+2}{2}}$ equals ${1}/{q}$. 

Recall that  $P_i$ denotes the reverted polynomial  for $p_i$ introduced above.  (Since the constant term of $p_i$ equals $1$ one gets that  $P_i$ is monic.) In Lemma~\ref{lm:6} below we show that each $P_i$ has all simple negative roots except for a single double root which has the minimal absolute value  among all roots of $P_i$. Since each  root of $p_i$ is the  inverse of the corresponding root of $P_i$ one gets that $p_i$ also has all real and negative roots except for a single double root which has the maximal absolute value among the roots of $p_i$. Denote by 
$\zeta_i$ the unique double root of  $P_i$. For a positive integer $i=1,2,...$ define the {\em scaled reverted polynomial}  Ê$\widetilde P_i(x)=P_i(-\zeta_{i}x)/P_i(0)$.  The scaling of $P_i$ is done in such a way that its double root is placed at $-1$ and its constant term equals $1$.  The main result of our paper is as follows. 

\medskip
\begin{theorem}\label{th:limit} The limit $m_\infty=\lim_{i\to \infty} m_i$ exists and  coincides with ${1}/{\q}$, where $\q>0$ is the maximal positive number for which the series $\Psi(q,u)$ belongs to $\LP^+$ as a function in $u$. Moreover, the sequence $\{\widetilde P_i\}$ of the scaled reverted polynomials converges  to $\Psi(\q ,-\tilde u x)$, where $\tilde u$ is the unique real double root of $\Psi(\tilde q,u)$, see Figure~\ref{fig2}. 
 \end{theorem}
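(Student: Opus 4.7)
The plan is to combine the estimate $m_i\geq 3$ from Theorem~\ref{th:est}, the monotonicity from Theorem~\ref{th:monot}, and the structural description from Theorem~\ref{th:main} with an analysis of the extremal polynomials $\widetilde{P}_i$ in the limit. Existence of $m_\infty$ is immediate, since $\{m_i\}$ is strictly decreasing and bounded below by $3$. To identify the value of $m_\infty$ I would prove two matching inequalities. The upper bound $m_\infty\leq 1/\q$ is easy: by definition $\Psi(\q,u)$ lies in $\LP^+$, so all its sections belong to $\Delta$; since every quotient $a_j^2/(a_{j-1}a_{j+1})$ for $\Psi(\q,u)$ equals $1/\q$, the infimum defining $m_i$ yields $m_i\leq 1/\q$ for every $i$, whence $m_\infty\leq 1/\q$.

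For the reverse inequality and the convergence claim, I would analyse subsequential limits of the scaled reverted polynomials $\widetilde{P}_i$. A direct calculation shows that the coefficient of $x^k$ in $\widetilde{P}_i$ equals $(A_{i-k}/A_i)(-\zeta_i)^k$, so that the ratio $\widetilde a_k^{\,2}/(\widetilde a_{k-1}\widetilde a_{k+1})$ for $\widetilde{P}_i$ reduces to $A_{i-k}^2/(A_{i-k-1}A_{i-k+1})=m_{i-k}$ (reversion and rescaling leave these ratios invariant up to a shift of indices, and then Theorem~\ref{th:main} identifies them). For each fixed $k$ this converges to $m_\infty$ as $i\to\infty$. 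Pre-compactness of $\{\widetilde{P}_i\}$ in the topology of locally uniform convergence follows from $\widetilde{P}_i\in\LP^+$, the normalization $\widetilde{P}_i(0)=1$, and the smallest absolute value root being pinned at $-1$ (Lemma~\ref{lm:6}); Hadamard-type factorization combined with Vieta-style bounds on the elementary symmetric functions then controls each coefficient uniformly in $i$.

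A subsequential limit $F$ is therefore an entire function in $\LP^+$ with $F(0)=1$, a double root at $-1$, and coefficients $b_k$ obeying $b_k^2/(b_{k-1}b_{k+1})=m_\infty$ for every $k\geq 1$. The unique solution of this recurrence with $b_0=1$ is $b_k=b_1^k m_\infty^{-k(k-1)/2}$, which after a trivial change of variable identifies $F$ with $\Psi(q^*,sx)$ for $q^*=1/m_\infty$ and a suitable $s>0$. Since $F\in\LP^+$, the defining maximality of $\q$ forces $q^*\leq\q$, giving $m_\infty\geq 1/\q$ and, combined with the upper bound, $m_\infty=1/\q$ and $q^*=\q$. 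The double root of $F$ at $-1$ together with $F(0)=1$ then pins down $s=-\tilde u$ and shows simultaneously that $\Psi(\q,u)$ does possess a real double root, which is $\tilde u$. Since every subsequential limit coincides with $\Psi(\q,-\tilde u x)$, the whole sequence $\widetilde{P}_i$ converges to this function.

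The main obstacle I anticipate is the compactness step together with excluding degenerate behavior of the scalings $\zeta_i$ (one must rule out $\zeta_i\to 0$ and $\zeta_i\to\infty$), which is equivalent to proving that the multiple-root locus of $\Psi(q,u)$ in $(q,u)$-space meets the real $u$-axis exactly at $q=\q$ and does so transversally enough for $-1/\zeta_i$ to converge to a genuine limit $\tilde u$. This is presumably where Lemma~\ref{lm:6} and a careful deformation argument on the discriminant of $\Psi(q,u)$ enter in an essential way.
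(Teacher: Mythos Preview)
Your overall strategy is genuinely different from the paper's and is attractive, but two of the stated justifications are wrong as written.

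First, the upper bound: membership in $\LP^+$ does \emph{not} imply section-hyperbolicity. For instance $(1+x)^3=1+3x+3x^2+x^3$ lies in $\LP^+$, yet its degree-$2$ section $1+3x+3x^2$ has negative discriminant. So from the bare definition of $\tilde q$ you cannot conclude that the sections of $\Psi(\tilde q,u)$ belong to $\Delta$; that is a separate (nontrivial) theorem of Katkova--Lobova--Vishnyakova, cited in the introduction but not invoked in your argument.

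Second, and more seriously, your compactness claim fails as stated: the conditions $\widetilde P_i(0)=1$, $\widetilde P_i\in\LP^+$, and ``rightmost root pinned at $-1$'' do \emph{not} bound the coefficients uniformly---again $(1+x)^n$ is a counterexample. What is missing is a bound on $\widetilde a_1=\sum_j 1/|\rho_j|$, i.e.\ quantitative separation of the remaining roots from $-1$. The paper establishes exactly this through a chain of hard lemmas (Lemmas~\ref{leq1/3}--\ref{summarise1} and Theorem~\ref{limit}): analysing equation~\eqref{apriori} for the rightmost critical point, it shows that the ratio $\xi_k/\xi_{k-1}$ of successive double-root positions lies in a shrinking sequence of intervals inside $(0.286\ldots,1/3)$ and converges to a limit $\lambda$ solving $\Phi(-\lambda)=\psi(\lambda)$. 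This simultaneously gives geometric spacing of the roots of $P_i$ (hence genuine compactness after rescaling), the convergence $m_i\to 1/\lambda$ (Theorem~\ref{limit2}), and the existence of the limiting entire function $V$ (Remark~\ref{rm:V}, Proposition~\ref{pr:V}); only afterwards is $V$ identified with $\Psi(\tilde q,-\tilde u x)$ via the functional equation (Theorem~\ref{th:relation}), forcing $\lambda=\tilde q$. In short, the paper does the hard convergence analysis first and the identification last; you try to reverse the order, but the analytic control on the root positions cannot be bypassed by the soft ingredients you list.
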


Notice that by \eqref{eq:psitheta} the function $\Theta(q,u)$ belongs to $\LP^+$ exactly on the same interval of values of $q$ as $\Psi(q,u)$ does, namely, $q\in (0,\q)$. The approximative value of $\q$  with 10 decimal places is $0.3092493386$, see Figure~\ref{fig2}. We will later show that $\tilde q$ is a root of transcendental equation~\eqref{eq:main}. Notice that the constant $1/\q\approx 3.2336$ has earlier appeared in the papers \cite{KLV1} and \cite{KLV2}, where the authors studied functions closely related to the above partial theta function.  Namely, one of  the main objects in paper~\cite{KLV1}  is the function 
$$g_a(x):=\sum_{j=0}^\infty \frac{x^k}{a^{k^2}},\; a>1.$$
Theorem~4 of \cite{KLV1} claims  that $g_a(x)$ has all hyperbolic sections, i.e. is section-hyperbolic  if and only if $a^2\ge 1/\q$.  (Similar statements can be found in a recent preprint \cite{Han}.)   
Again, notice that there exists a simple relation between $g_a(x)$ and $\Theta(q,u)$, namely
$$g_{\sqrt q 
}(u)=\theta(q,\sqrt{q}u)$$
and, therefore, the functions $g_{\sqrt q}(u)$, $\Psi(q,u)$ and  $\Theta(q,u)$ belong to $\LP^+$ exactly on the same interval $(0,\q)$ of values of $q$.  

Furthermore,  Theorem~2 of \cite{KLV2}  claims the following. 

\begin{theorem}\label{th:OlyaAnya}
Let $f(x)=\sum_{j=0}^\infty a_jx^j,\; a_j>0$ be an entire function and $S_n(x)=\sum_{j=0}^na_jx^j$ be its sections. Suppose that 
there exists a subsequence $\{n_j\}_{j=1}^\infty\subset \bN$ such that $S_{n_j}(x)$ is hyperbolic for $j=1,2,...$. If there exists $\delta_\infty(f)=\lim_{n\to\infty}\frac{a_{n}^2}{a_{n-1}a_{n+1}}$, then for any positive integer $m$ one has that  $\sum_{j=0}^m\frac{x^j}{j!(\sqrt{\delta_\infty})^{j^2}}$ is hyperbolic.
\end{theorem}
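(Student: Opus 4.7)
\textbf{Proof plan for Theorem~\ref{th:OlyaAnya}.}

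Set $q:=1/\delta_\infty$ and, after dividing by $a_0>0$, assume $a_0=1$. The strategy is: for each hyperbolic section $S_{n_k}$ construct a rescaled reverted polynomial whose coefficients converge term--by--term to a partial theta--like sequence, then Hadamard--multiply against the hyperbolic Taylor section of $e^x$ and pass to the limit. Write $b_n:=a_n/a_{n-1}$, let $P_n(y)=\sum_{j=0}^{n}a_{n-j}y^j$ be the reverted polynomial of $S_n$, and set
$$\widehat P_n(y):=\frac{1}{a_n}\,P_n(b_n y)=\sum_{j=0}^{n} c_j^{(n)}y^j,\qquad c_j^{(n)}:=\frac{a_{n-j}\,a_n^{\,j-1}}{a_{n-1}^{\,j}}.$$
A short calculation gives $c_0^{(n)}=c_1^{(n)}=1$ and $c_j^{(n)}/c_{j-1}^{(n)}=b_n/b_{n-j+1}$ for $j\ge 1$. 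Since $P_{n_k}$ is hyperbolic with all negative roots and $b_{n_k}>0$, the polynomial $\widehat P_{n_k}$ is of the same type.

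The analytic heart of the argument is the fixed--$j$ coefficient limit $c_j^{(n)}\to q^{j(j-1)/2}$ as $n\to\infty$ through $\bN$. The hypothesis $a_n^2/(a_{n-1}a_{n+1})\to 1/q$ is exactly $b_{n+1}/b_n\to q$, so telescoping a fixed, finite number of consecutive ratios yields $b_n/b_{n-i+1}\to q^{i-1}$ for each fixed $i\ge 1$, whence
$$c_j^{(n)}=\prod_{i=2}^{j}\frac{b_n}{b_{n-i+1}}\;\longrightarrow\;\prod_{i=2}^{j}q^{i-1}=q^{j(j-1)/2}.$$
Now fix $m\ge 1$ and choose $n_k\ge m$. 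The Taylor section $E_m(x):=\sum_{j=0}^{m}x^j/j!$ is a classical hyperbolic polynomial with only negative real roots, and the Schur--Malo theorem on Hadamard products---the coefficient--wise product of two polynomials each of whose roots are real and non--positive is again of that type---applied to $\widehat P_{n_k}$ and $E_m$ produces
$$R_{n_k}(x):=\sum_{j=0}^{m}\frac{c_j^{(n_k)}}{j!}\,x^j,$$
which is hyperbolic with negative roots and of degree exactly $m$ (since all its coefficients are positive).

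Letting $k\to\infty$, the coefficients of $R_{n_k}$ converge to those of $B_m(x):=\sum_{j=0}^{m}q^{j(j-1)/2}x^j/j!$; the positive leading coefficient $q^{m(m-1)/2}/m!$ is preserved in the limit, so $B_m$ has degree $m$ and, as a term--by--term limit of hyperbolic degree--$m$ polynomials (a closed condition on the coefficient space of degree--$m$ polynomials), is itself hyperbolic. A direct identity then gives
$$\sum_{j=0}^{m}\frac{x^j}{j!(\sqrt{\delta_\infty})^{j^2}}=\sum_{j=0}^{m}\frac{q^{j^2/2}}{j!}\,x^j=B_m(q^{1/2}x),$$
which is therefore hyperbolic as well, completing the argument. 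The main delicate point is the coefficient limit in the second paragraph: although it reduces to a finite product of converging ratios, one must use that the hypothesis controls $b_{n+1}/b_n$ for all large $n$ (not merely along the hyperbolic subsequence $\{n_k\}$), and then commute the limit $n\to\infty$ with the fixed, finite product.
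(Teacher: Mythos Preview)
The paper does not contain a proof of this theorem: it is quoted verbatim as ``Theorem~2 of \cite{KLV2}'' and used only as an input to a corollary, so there is no in-paper argument to compare yours against.

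On its own merits your argument is sound. The three ingredients---(1) reverting and rescaling the hyperbolic section so that the first two coefficients are normalized and the remaining ones converge to $q^{j(j-1)/2}$ via the telescoping identity $c_j^{(n)}=\prod_{i=2}^{j}b_n/b_{n-i+1}$, (2) taking the Hadamard product with the truncated exponential $E_m$, and (3) passing to the limit in the closed set of degree-$m$ real-rooted polynomials---fit together correctly. Two points worth making explicit for a polished version: first, the Hadamard-product step is exactly Schur's theorem (if $\sum a_kx^k$ and $\sum b_kx^k$ each have only real non-positive zeros then so does $\sum a_kb_kx^k$); one clean way to see it is that $\{1/k!\}$ is a multiplier sequence, so $\sum (c_k^{(n_k)}/k!)x^k$ already has only non-positive real zeros and hence $\{c_k^{(n_k)}\}$ is itself a multiplier sequence, which may then be applied to $E_m$. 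Second, the limit step uses $0<q<\infty$ (equivalently $0<\delta_\infty<\infty$), which is implicit in the statement since otherwise the target polynomial is undefined or degenerate; you should state this explicitly. With these small additions your write-up is a complete proof.
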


As a corollary of Theorem~\ref{th:OlyaAnya} one obtains that if $f(x)$  is section-hyperbolic (at least for all sufficiently large $n$) and $\delta_\infty(f)$ exists, then $\delta_\infty(f)\ge 1/\q$. 

\begin{figure}

\begin{center}
\includegraphics[scale=0.35]{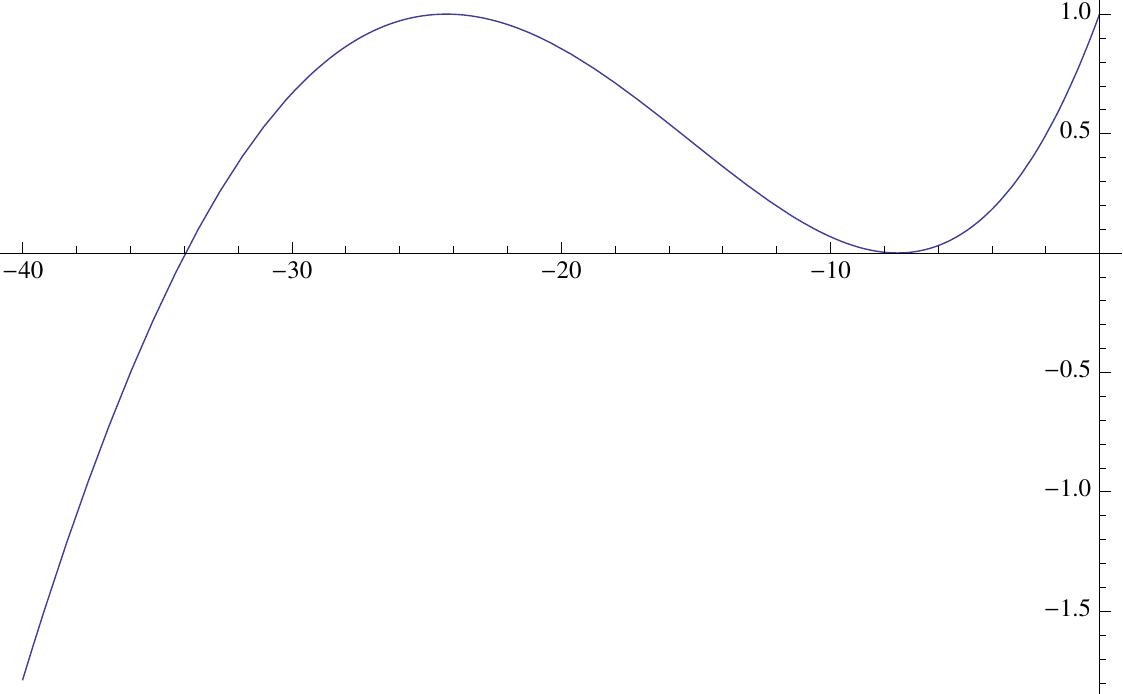} \hskip 1cm \includegraphics[scale=0.35]{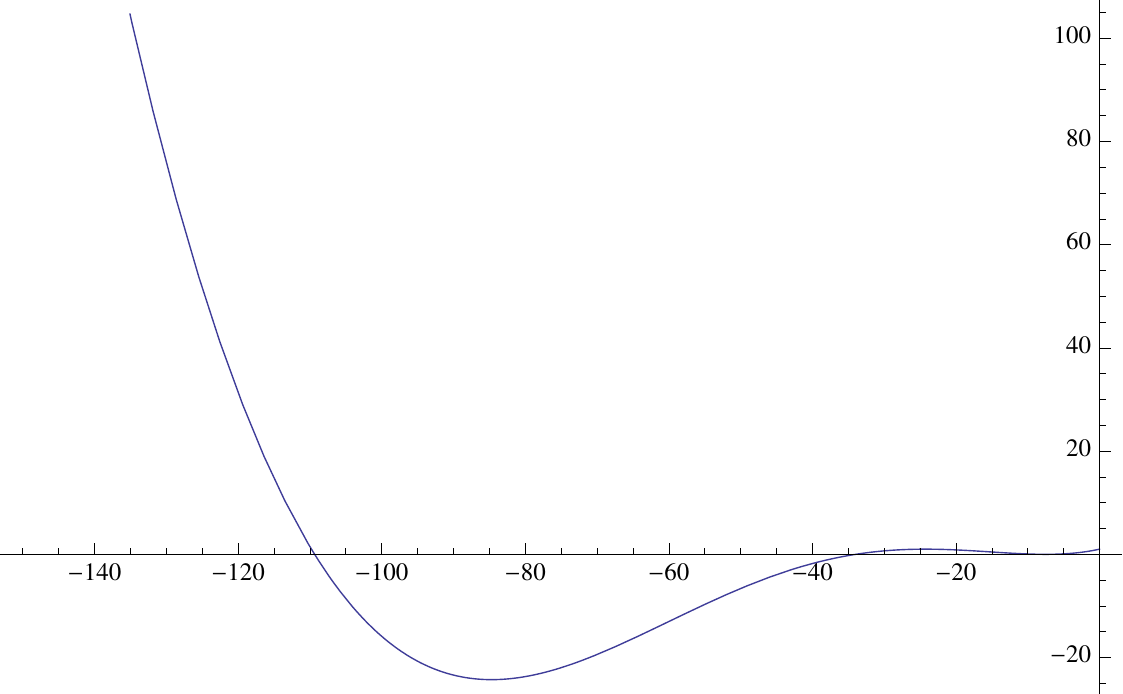}
\end{center}

\vskip 0.3cm

\caption {$\Psi(\q,u)$ in the intervals $[-40,0]$ and $[-150,0]$. (The value $\widetilde u$ of the negative double root of $\Psi(\q,u)$ with 10 decimal places is  $-7.5032559833$.)}
\label{fig2}
\end{figure}

Finally let us  mention a technical result of independent interest  very much in the spirit of the modern study of amoebas of complex hupersurfaces and, in particular, of discrimininants. 
 Denote   by $L\Sigma_n\subset \bR^n$ (respectively $L\Delta_n\subset \bR^n$)  the images of $\Sigma_n\subset Pol_n$ (respectively of $\Delta_n\subset Pol_n$) under taking coefficientwise logarithms.  

\begin{proposition}\label{prop:log. convexity} {\rm(i)} The  polyhedral cone given by Hutchinson's inequalities \eqref{eq:Hutch} (i.e. coinciding with the logarithmic image of the set of two-sided truncation hyperbolic polynomials) is the  maximal polyhedral cone contained in  $L\De_n$. The same cone is the maximal polyhedral cone contained  in $L\Sigma_n$. 

\noindent
{\rm(ii)} The minimal polyhedral cone containing  $L\De_k$ is given by Petrovitch's inequalities \eqref{eq:Petrov}Ê  while the minimal polyhedral cone containing $L\Sigma_n$ is given by Newton's inequalities \eqref{eq:Newton}, see Remark~\ref{rm:1}. 
\end{proposition}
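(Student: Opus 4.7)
Pass to logarithmic coordinates $b_i=\log a_i$ (with $b_0=0$). Each three-term inequality $a_i^2/(a_{i-1}a_{i+1})\ge c$ becomes the linear inequality $2b_i-b_{i-1}-b_{i+1}\ge\log c$, so every polyhedron in the proposition is cut out in $\mathbb{R}^n$ by $(n-1)$ such parallel half-spaces, one for each $i=1,\ldots,n-1$. Three of the four inclusions are packaged in already established results: the inclusion of the Hutchinson polyhedron $H_n$ in $L\De_n\cap L\Sigma_n$ is Hutchinson's theorem (the inequalities imply every two-sided segment $a_jx^j+\cdots+a_kx^k$ is hyperbolic, in particular every initial section and the full polynomial); the inclusion of $L\De_n$ in the Petrovitch polyhedron is the definition of $m_i$ from Theorem~\ref{th:main}; and the inclusion of $L\Sigma_n$ in the Newton polyhedron is classical.

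The minimality claims in (ii) are then immediate from sharpness: $m_i$ is attained at the polynomial $p_{i+1}\in\De_{i+1}\subseteq\De_n$ by Theorem~\ref{th:main}, so no strictly larger constant at index $i$ can be used in the Petrovitch polyhedron without excluding $p_{i+1}$; and equality in every Newton inequality is attained simultaneously at every index by $(1+rx)^n$ (coefficients $\binom{n}{i}r^i$, ratios $(n-i+1)(i+1)/((n-i)i)$), so no strictly larger constant at any index can be used in the Newton polyhedron.

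The substantive content is the maximality of $H_n$ in (i). Fix $i_0\in\{1,\ldots,n-1\}$ and $c^*<4$; I would construct a \emph{witness} polynomial $p$ with $a_{i_0}^2/(a_{i_0-1}a_{i_0+1})=c^*$, with all other three-term ratios as large as desired, and with $p\notin\Sigma_n$. Such a $p$ lies in every three-term polyhedron that weakens the $i_0$-th Hutchinson bound to $c^*$ regardless of the other constants, ruling out any such polyhedron as a subset of $L\Sigma_n$ (a fortiori of $L\De_n$). The construction proceeds by choosing the ``core'' coefficients $a_{i_0-1},a_{i_0},a_{i_0+1}$ so that the quadratic $a_{i_0-1}+a_{i_0}z+a_{i_0+1}z^2$ has strictly negative discriminant (which is possible precisely because $c^*<4$); by letting the remaining positive coefficients decay so fast that every three-term ratio at $i\ne i_0$ exceeds a preassigned threshold; and then applying the substitution $x=y/M$ with $M\to\infty$ so that $p$ degenerates to a fixed low-degree polynomial whose discriminant is dominated by a term proportional to $c^*-4<0$. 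A root-continuity argument then forces a non-real pair of roots in $p$, giving $p\notin\Sigma_n$, and inspection of the section of $p$ through the quadratic core gives $p\notin\De_n$.

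The main obstacle is this witness construction. One must simultaneously calibrate the background coefficients to achieve arbitrarily large three-term ratios at indices $i\ne i_0$ and ensure that the rescaled limit polynomial is strictly (not merely marginally) non-hyperbolic, so that its complex roots survive the perturbation from the vanishing background. A two-parameter scaling — first fixing a quadratic with complex roots at the three core indices, then shrinking $x\mapsto y/M$ — decouples these requirements cleanly, and a direct computation of the dominant term of the limit's discriminant (of the form $\kappa\,(c^*-4)$ with $\kappa>0$) provides the needed sign.
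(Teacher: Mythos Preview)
Your overall approach matches the paper's. The witness construction for part~(i) is exactly the content of Lemma~\ref{lm:Hutch}, and your treatment of part~(ii) via sharpness at $p_n$ and at $(1+x)^n$ is what the paper does as well (phrased there as placing the apex of a translate of the Hutchinson cone at the logarithmic image of $P_n$, respectively of $(x+1/n)^n$). One notational slip: $\Delta_{i+1}$ is not a subset of $\Delta_n$; the correct witness inside $\Delta_n$ is $p_n$ itself, whose first $i{+}2$ coefficients coincide with those of $p_{i+1}$ and hence realize $A_i^2/(A_{i-1}A_{i+1})=m_i$.

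There is, however, a real confusion in your degeneration step. The substitution $x=y/M$ is a linear change of variable: it preserves both hyperbolicity and every three-term ratio $a_i^2/(a_{i-1}a_{i+1})$, so it cannot make $p$ ``degenerate to a fixed low-degree polynomial'' in any sense that detects non-real roots. What actually does the work---and what the paper carries out explicitly---is letting the \emph{background coefficients themselves} depend on a small parameter. Fix the core $(a_{i_0-1},a_{i_0},a_{i_0+1})$ with $a_{i_0}^2<4a_{i_0-1}a_{i_0+1}$, and for $j\notin\{i_0-1,i_0,i_0+1\}$ set $a_j=b_j\varepsilon^{|j-i_0|-1}$ with $b_j>0$ chosen so that the ratios at $j\neq i_0$ are already $\ge 4$ when $\varepsilon=1$; a direct check shows these ratios remain $\ge 4$ for all $\varepsilon\in(0,1]$, and the two adjacent to the core in fact blow up. As $\varepsilon\to 0$ the polynomial $p$ converges coefficientwise to $g(x)=x^{i_0-1}(a_{i_0+1}x^2+a_{i_0}x+a_{i_0-1})$, and Hurwitz's theorem applied on small circles about the complex conjugate roots of $g$ forces $p\notin\Sigma_n$ for all small $\varepsilon>0$. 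Your ``decay'' step already carries this idea; the separate rescaling $x\mapsto y/M$ is vacuous and should be dropped.
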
  

Notice that Hutchinson's cone is, on the other hand, the recession cone of the logarithmic image of the set  sign-invariant hyperbolic polynomials, see \cite{PRS}.  A  fact similar to Proposition~\ref{prop:log. convexity} is proven in Theorem~F of \cite{KLV2}.

%\begin{remark} {\rm Recall that for a polynomial $p(x)=a_0+a_1x+...+a_nx^,\; a_i> 0$ with all rea l negative roots Newton's inequalities claim that 
%\begin{equation}\label{eq:Newton}
%\frac{a_i^2}{a_{i--1}a_{i+1}}\ge \frac{(i+1)(n+i-1)}{i(n+i-2)} 
%\end{equation}
%where equality occurs if and only if  all roots of $p(x)$ coincide, see e.g. \cite {Ni}}. 
%\end{remark}

\medskip
\noindent
{\it Acknowledments.}  The  authors want to thank Professors O.~Katkova and A.~Vishnyakova of Kharkov National University for the formulation of the problem and their proof of Theorem~1 which they kindly allowed us to include in the present paper. They also gave us important hints and contributed to the proof of Lemma~\ref{lm:finite}.  We are sincerely grateful to Professors A.~Eremenko and A.~Sokal and, especially, Professor G.~E.~Andrews for the valuable information about partial theta functions.  The second author  wants to acknowledge the hospitality of  Laboratoire de Math\'ematiques, Universit\'e de Nice  during his  visit %to C\^ote d'Azur  
in  April-May 2011 when this project was carried out. 

\medskip

\section{Proofs}Ê

We start with Theorem~\ref{th:est}. 

\begin{proof}

Take some polynomial $r_n(x)=a_0+a_1x+...+a_nx^n$ belonging to $\cP_n$ and 
set  $r_k(x)=a_0 + a_1 x + ...+a_k x^k, \  k=2, 3, \ldots
,n.$ By our assumption the polynomials $r_k(x)$ are hyperbolic for all
$k$. Set $\gamma_k:=\frac{a_{k-1}}{a_k},
\delta_k:=\frac{\gamma_k}{\gamma_{k-1}} =
\frac{a_{k-1}^2}{a_{k-2}a_{k}}.$

Let us fix an arbitrary $k= 3, 4, \ldots , n.$ Denote by
$0>x_1^{(k)}\ge x_2^{(k)}\ge ... \ge x_k^{(k)}$ the zeros of $r_k(x).$
Using the Cauchy inequality with $|x_j^{(k)}|^{1/2}$ and
$|x_j^{(k)}|^{3/2}$ we get

$$(x_1^{(k)}+x_2^{(k)}+...+x_k^{(k)}) ((x_1^{(k)})^3+(x_2^{(k)})^3+...+(x_k^{(k)})^3) \geq ((x_1^{(k)})^2+(x_2^{(k)})^2+...+(x_k^{(k)})^2)^2.$$

From the standard identities for (elementary) symmetric functions
we get
$$x_1^{(k)}+x_2^{(k)}+...+x_k^{(k)} = - \frac{a_{k-1}}{a_k} = -\gamma_k,$$
$$(x_1^{(k)})^2+(x_2^{(k)})^2+...+(x_k^{(k)})^2 = \left(\frac{a_{k-1}}{a_k}\right)^2 - 2
\frac{a_{k-2}}{a_k}=  \gamma_k^2 - 2 \gamma_k \gamma_{k-1},$$

$$(x_1^{(k)})^3+(x_2^{(k)})^3+...+(x_k^{(k)})^3 =  - \left(\frac{a_{k-1}}{a_k}\right)^3 +
 3 \frac{a_{k-1}}{a_k}\frac{a_{k-2}}{a_k} -3 \frac{a_{k-3}}{a_k} =
  - \gamma_k^3 + 3 \gamma_k^2 \gamma_{k-1} -3 \gamma_k \gamma_{k-1}\gamma_{k-2}.$$

Substituting these identities in the above inequality and dividing
by $\gamma_k^2 \gamma_{k-1}$ we obtain $$ \gamma_k - 4
\gamma_{k-1} + 3 \gamma_{k-2} \geq  0.$$

Dividing the latter inequality  by $\gamma_{k-2}$ and using
$\delta_j$'s we get the following inequality:

\begin{equation}\label{eq:2}
\delta_k \delta_{k-1} -4 \delta_{k-1} + 3 \geq 0.
\end{equation}

Since $k= 3, 4, \ldots , n$ is an arbitrary index, we get from
(\ref{eq:2}) the following system of inequalities

\begin{equation}\label{eq:2222}
\delta_k \delta_{k-1} -4 \delta_{k-1} + 3 \geq 0, \quad k= 3, 4,
\ldots , n.
\end{equation}
Since $r_2(x)$ is hyperbolic we have $\delta_2 \ge 4.$ Suppose
that the statement of the theorem is not true, and denote by $j$
the smallest index such that $\delta_j <3$, so that $\delta_{j-1}
\geq 3$ and $\delta_{j} < 3$ ($j=3, 4, \ldots, n$). We rewrite
(\ref{eq:2222}) for $k=j$ in the form
$$(\delta_j -4) \delta_{j-1} + 3 \geq 0 .$$
Since $\delta_j -4 <0 $, and $\delta_{j} < \delta_{j-1}$, the
above inequality implies
$$(\delta_j -4) \delta_{j} + 3 > 0 ,$$
whence $\delta_j \in (-\infty, 1) \bigcup (3, +\infty).$ By our
assumption $r_j(x)$ is a hyperbolic polynomial, thus $\delta_j \in
(-\infty, 1)$ is impossible. We conclude that $\delta_j\ge 3$.

\end{proof}

To  prove Theorem~\ref{th:main} we need some preliminaries. Observe that rescaling of the independent variable $x$ by an arbitrary positive constant acts on all spaces of polynomials we introduced above preserving the quantities $a_i^2/a_{n-1}a_n$. This action allows us to normalize  $a_1=1$ in $Pol_n$ and analogously $a_1=1$ in $\cP_n$ and, therefore, to reduce the number of parameters by  one. Define $\cP^1$ as the space of polynomials of the form $p(x)=1+x+a_2x^2+...+a_nx^n$ and $Pol_n^1$ as the space of polynomials of the form $P(x)=x^n+x^{n-1}+a_2x^{n-2}+...+a_n$. Notice that reversion  sends $\cP^1_n$ onto $Pol^1_n$ and that our main polynomials $p_i$ belong to $\cP_i^1$ while their reverted polynomials $P_i$ belong to $Pol^1_i$. From now on instead of working in $\De_n\subset Pol_n$ we will work in $\De_n^1\subset Pol_n^1$ which is the restriction of $\De_n$ to $Pol^1_n$. The above group action carries our proofs from one space to the other. 

Define the standard embedding $em_{j,n}: Pol_j^1\to Pol_n^1,\; j<n$ (respectively $Pol_j \to Pol_n$) given by multiplication of a monic polynomial of degree $j<n$ by $x^{n-j}$.  Obviously, the image  $em_{j,n}(Pol_j^1)\subset Pol_n^1$ coincides with the coordinate subspace of all monic polynomials having all  coefficients of degree less than $n-j$ vanishing.  Denote by $\cDD_j\subset Pol_j^1$ the standard discriminant consisting of all monic polynomials  of degree $j$  having at least one  real root of multiplicity at least $2$. Embedding $\cDD_j$ into $Pol_n^1$ using $em_{i,j}$ let us define the discriminant $\cDD_{j,n}\subset Pol_n^1$  by taking the trivial $(n-j)$-dimensional cylinder over $em_{i,j}(\cDD_j)$  along all coefficients of degree less than $(n-j)$.  Define $\Delta_n^1\subset Pol_n^1$ and $\Sigma_n^1\subset Pol_n^1$ as the restrictions of $\De_n$ and $\Sigma_n$ to $Pol_n^1$. Finally, consider the closure $\overline\De_n^1\subset Pol_n^1$ of the set $\De_n^1\subset Pol_n^1$. 

\begin{lemma}\label{lm:simplex}
{\rm (i)} The set $\overline \De_n^1$ has a natural stratification of an $(n-1)$-dimensional  simplex with vertices at $P_1=x^n+x^{n-1}, P_2=x^n+x^{n-1}+x^{n-2}/4,P_3, ....,P_n$. Different  $(n-2)$-dimensional (boundary) faces of $\overline \De_n^1$  belong to different $\cDD_{j,n},\; j=0,1,2,...,n$, see Figure~\ref{fig0}.  

\noindent
{\rm (ii)}Ê The  natural projection $\pi_n$  by 'forgetting' the constant term sends $\overline \De_n^1$ onto  $\overline\De_{n-1}^1$. 

\noindent
{\rm (iii)}Ê Any polynomial in $\overline \De_n^1$ can be connected to $P_n$ by a smooth path along which all coefficients are non-decreasing. 
\end{lemma}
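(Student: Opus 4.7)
The plan is to prove the three parts in the order (ii), (i), (iii), with each step built by induction on $n$. Part (ii) sets up the projection; part (i) uses it to present $\overline\De_n^1$ as a ``subgraph'' over $\overline\De_{n-1}^1$; and part (iii) lifts a monotone path from the base to the total space.

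For (ii), given $P = x^n + x^{n-1} + a_2 x^{n-2} + \cdots + a_n \in \overline\De_n^1$, I would set $\widetilde P = x^{n-1} + x^{n-2} + a_2 x^{n-3} + \cdots + a_{n-1}$, so that $\pi_n$ amounts to dropping the constant term and dividing by $x$. For every $i \le n-1$, the type-$i$ section of $P$ equals $x$ times the type-$i$ section of $\widetilde P$, and the two have the same hyperbolicity; hence $\widetilde P \in \overline\De_{n-1}^1$. Conversely, for any $\widetilde P \in \overline\De_{n-1}^1$ the polynomial $x \widetilde P$ (with $a_n = 0$) lies in $\overline\De_n^1$, giving surjectivity.

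For (i), I would identify $\overline\De_n^1$ with the subgraph $\{(\widetilde P, a_n) : \widetilde P \in \overline\De_{n-1}^1,\ 0 \le a_n \le A(\widetilde P)\}$, where $A(\widetilde P)$ denotes the maximal $a_n$ for which $x \widetilde P + a_n$ is hyperbolic. The key properties of $A$ I plan to verify are: it is continuous; it vanishes exactly on the face $\{a_{n-1} = 0\}$ of $\overline\De_{n-1}^1$ (because $x \widetilde P$ there has a double root at $0$ which is split into a complex pair by any positive shift); and it attains its maximum uniquely at $\widetilde P = P_{n-1}$, yielding the apex $(P_{n-1}, A(P_{n-1})) = P_n$. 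Combined with the inductive simplex structure of $\overline\De_{n-1}^1$, these properties exhibit $\overline\De_n^1$ as a topological $(n-1)$-simplex with vertices $P_1, \ldots, P_n$. The codimension-$1$ faces split into: the base $\{a_n = 0\}$ (the embedded $\overline\De_{n-1}^1$), the ``top'' face lying in $\cDD_{n,n}$ (the graph of $A$), and side faces which are graphs of $A$ over the remaining codimension-$1$ faces of $\overline\De_{n-1}^1$. The face $\{a_{n-1}=0\}$ of $\overline\De_{n-1}^1$ collapses into the base because $A$ vanishes there; by induction each of the $n$ resulting faces identifies with a distinct $\cDD_{j,n}$.

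For (iii), my plan is a two-stage construction. First, increase $a_n$ monotonically from its current value to $A(\pi_n(P))$, keeping $a_2, \ldots, a_{n-1}$ fixed; this lifts $P$ onto the top face. Second, use the inductive hypothesis to obtain a smooth monotone path $\widetilde P_t$ in $\overline\De_{n-1}^1$ from $\pi_n(P)$ to $P_{n-1}$, and follow it in $\overline\De_n^1$ by choosing $a_n(t)$ appropriately, terminating at $P_n$. The main obstacle is that the naive lift $a_n(t) = A(\widetilde P_t)$ need not be monotone: by the envelope-theorem identity $dA = -\alpha_*\,\widetilde Q(\alpha_*)\, d\epsilon$ (with $\alpha_* < 0$ the double-root location of $x \widetilde P + A$ and $\widetilde Q$ the perturbation direction), the partial derivative of $A$ with respect to the coefficient of $x^k$ in $\widetilde P$ equals $-\alpha_*^{k+1}$, which alternates in sign as $k$ varies. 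I plan to resolve this by establishing inductively the stronger geometric statement that at every $P \in \overline\De_n^1 \setminus \{P_n\}$ the tangent cone of $\overline\De_n^1$ meets the open positive orthant of $\bR^{n-1}$; combined with compactness and a continuous-selection argument this produces a piecewise-smooth monotone path to $P_n$, which can then be smoothed.
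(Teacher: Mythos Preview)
Your treatment of (ii) and (i) is essentially the paper's: both argue by induction, presenting $\overline\De_n^1$ as fibered over $\overline\De_{n-1}^1$ with fiber $[0,A(\widetilde P)]$ over each $\widetilde P$, and reading off the simplex stratification from this subgraph description.

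For (iii) your approach diverges from the paper's, and in doing so you put your finger on a genuine gap. The paper asserts that on the top face $\cDD_n$ one has $\partial a_n/\partial a_i>0$ for every $i=2,\dots,n-1$, citing the ``formula'' $\partial a_n/\partial a_i=\sum_j(\partial a_n/\partial(-x_j))/(\partial a_i/\partial(-x_j))$; this is not a valid chain-rule identity. Your envelope computation $\partial A/\partial a_i=-\alpha_*^{\,n-i}$ is the correct one, and for $n\ge 4$ these partial derivatives do alternate in sign (for instance $\partial A/\partial a_2=-\alpha_*^{\,n-2}<0$). So the paper's claim that $a_n$ is non-decreasing along every non-negative base direction is false as stated, and the monotonicity of $a_n$ along the naive lift of the inductive path is not automatic. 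Your recognition of this is the substantive difference between the two arguments.

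Your proposed repair --- show that at each $P\neq P_n$ the tangent cone of $\overline\De_n^1$ meets the \emph{open} positive orthant, then integrate via compactness and continuous selection --- is a sound strategy, but the inductive hypothesis as you phrase it is slightly too strong: at the vertex $P_{n-1}$ (where $a_2,\dots,a_{n-1}$ already sit at their maxima on $\overline\De_{n-1}^1$) every admissible direction has $da_2\le 0$, so only the \emph{closed} non-negative orthant is met there (via the ray $da_n>0$). You will want to weaken the hypothesis accordingly, and then be explicit about how the local existence of a non-negative, nonzero tangent direction is promoted to a global monotone path terminating at $P_n$ (a flow argument, or first establishing that $P_n$ is the coordinatewise maximum of $\overline\De_n^1$, would do). With those adjustments your outline gives a cleaner and more honest proof of (iii) than the paper's.
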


\begin{remark}\rm{The original set $\De_n$ (respectively $\overline \De_n$) is the cylinder over $\De_n^1$ (respectively $\overline \De_n$) obtained by the action of the group of rescaling of $x$ by  positive constants.}
\end{remark}

\begin{proof}[Proof of Lemma~\ref{lm:simplex}]
The first two statements are rather obvious and proved by induction. $\overline \De_{n-1}^1$ is naturally embedded in the hyperplane $a_n=0$ of $Pol_n^1$ using the multiplication of polynomials of degree $n-1$ by $x$. Then  $\overline \De_{n}^1$ is fibered over the image of $\overline \De_{n-1}^1$ in $Pol_n^1$ along the constant term. To prove the third statement we show that the face $\cDD_n$ of the boundary of $\overline \De_{n}^1$ can be expressed as $a_n=a_n(a_2,....,a_{n-1})$, where $(a_2,...,a_{n-1})\in \overline \De_{n-1}^1$.  Moreover, for each $i=2,...,n-1$ one has $\partial a_n/\partial a_i>0$ in the whole open $\De_{n-1}^1$.   Indeed, denote by $x_i<0$ the roots of $P(x)$.   The double root is denoted by  $x_{n-1}=x_n$. Since  $a_{i}>0$ for all $i=1,...,n$  it 
will be convenient to consider them as elementary symmetric functions  in  the positive quantities $-x_j$. 
One has $\partial a_n/\partial a_{i}=
\sum _{j=1}^{n-1}(\partial a_n/\partial (-x_j))/(\partial a_{i}/\partial (-x_j))$. 
The quantities $a_n$, $\partial a_n/\partial (-x_j)$ and $\partial a_{i}/\partial (-x_j)$ 
are given by  homogeneous polynomials with positive coefficients in  all $-x_j$. This fact implies that the direction derivative of the function $a_n$ is non-negative along any vector in  $\De_{n-1}^1$ with all non-negative coordinates.  Using this statement together with induction on $n$ we get that any polynomial in $\De_n^1$ can be connected to $P_n$ by a smooth path with non-decreasing  coordinates. 
\end{proof} 

\begin{remark} \rm{In Lemma~\ref{lm:6} below we will prove that each polynomial in $\De_n^1$ has either simple negative roots or at most one  double root (in which case it belongs to $\cDD_n$) which is the rightmost among all roots of the considered polynomial. } 
\end{remark}

\begin{figure}

\begin{center}
\includegraphics[scale=0.35]{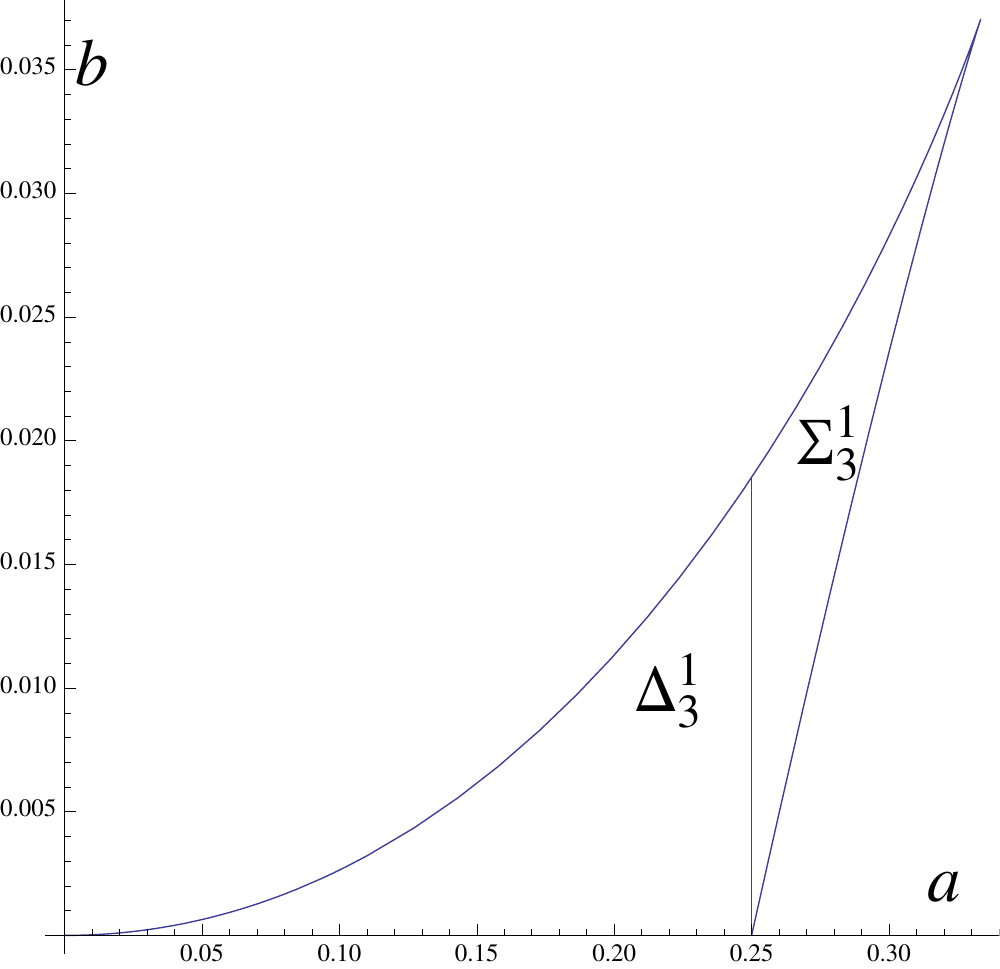}
\end{center}

\vskip 0.3cm

\caption {The domains $\Sigma_3^1$ and $\Delta_3^1$ for the family $x^3+x^2+ax+b$. (Notice that $\Sigma_3^1$  is the largest curvilinear triangle containing $\Delta_3^1$.)}
\label{fig0}
\end{figure}

\begin{proof} [Proof of Theorem~\ref{th:main}] 
Given some reverted section-hyperbolic polynomial $P(x)=x^n+x^{n-1}+a_2x^2+...+a_n$ of degree $n\geq 4$ consider the function $\kappa:=a_{n-1}^2/a_{n-2}a_n$. We want to show that $m_{n-1}=\min_{\De_n^1}\kappa$ is attained at $P_n(x)$ which is the reverted polynomial to  $p_n(x)$ defined in the Introduction.  
For fixed $a_2,..., a_{n-1}$ the function $\kappa$  is minimal when $a_n$ is maximal  in which case the polynomial $P(x)$ belongs to $\cDD_n$. Thus, we can restrict our consideration to $\De_n^1\ni P(x)\in \cDD_n$. Since each $P(x)$ can be connected to  $P_n(x)$ by a smooth 
path  along which each coefficient is non-decreasing it is enough to show that for all $i=2,...,n-1$ the partial derivative $\partial \kappa/\partial a_i$ is negative when $\kappa$ is restricted to $\cDD_n$. There are three different  cases to consider: (1) $i< n-2$; (2) $i=n-2$ and (3)  $i=n-1$. For $i<n-2$ one has 
$$\frac{\partial \kappa}{\partial a_i}=\frac{\partial \kappa}{\partial a_n}\frac{\partial a_n}{\partial a_i}=-\frac{a_{n-1}^2}{a_{n-2}a_n^2}\frac{\partial a_n}{\partial a_i}.$$
Since all $a_i>0$ and ${\partial a_n}/{\partial a_i}>0$ on $\cDD_n$ by Lemma~\ref{lm:simplex}  one has that case (1) is settled. 
Analogously, we have  
$$\frac{\partial \kappa}{\partial a_{n-2}}=-a_{n-1}^2\frac{a_n+a_{n-2}(\partial a_n/\partial a_{n-2})}{(a_{n-2}a_n)^2}~.$$
Again since all $a_i>0$ and ${\partial a_n}/{\partial a_{n-2}}>0$ on $\cDD_n$   case (2) is settled. 

Finally, one has 
$$\frac{\partial \kappa}{\partial a_{n-1}}=\frac{2a_{n-1}}{a_{n-2}a_n}-\frac{a_{n-1}^2(\partial a_n/\partial a_{n-1})}{a_{n-2}a_n^2}=
\frac{a_{n-1}(2a_n-a_{n-1}(\partial a_n/\partial a_{n-1}))}{(a_{n-2}a_n^2)}~~,~{\rm where}$$

$$a_{n-1}\frac{\partial a_n}{\partial a_{n-1}}=
a_{n-1}\sum _{i=1}^{n-1}\frac{\partial a_n}{\partial (-x_i)}\left/\frac{\partial a_{n-1}}{\partial (-x_i)}\right..$$
For $i\neq n-1$ one has $a_{n-1}=f_i+h_ig_i$, where $f_i$ and $g_i$ are homogeneous 
polynomials with positive coefficients depending on the positive 
variables $-x_k,\; k\neq i$. Therefore for $i\neq n-1$ one has 

\begin{equation}\label{neq}
a_{n-1}>-x_ig_i=-x_i\frac{\partial a_{n-1}}{\partial (-x_i)}~~{\quad\rm and\quad}~~ 
a_{n-1}\frac{\partial a_n}{\partial (-x_i)}\left/\frac{\partial a_{n-2}}{\partial (-x_i)}>-x_i\frac{\partial a_n}{\partial (-x_i)}\right..
\end{equation} 

For $i=n-1$ one has $a_{n-1}=-x_{n-1}v+x_{n-1}^2w$ and 
$$-x_{n-1}\frac{\partial a_{n-1}}{\partial (-x_{n-1})}=-x_{n-1}v+2x_{n-1}^2w.$$ 
Here $v$ and $w$ are given by homogeneous polynomials with positive coefficients in $-x_j$. (In what follows we will not need their explicit formulas.) 
Therefore, one has 
$$a_{n-1}>-\frac{x_{n-1}}{2}\frac{\partial a_{n-1}}{\partial (-x_{n-1})}.$$ 
Thus 

$$2a_n-a_{n-1}\frac{\partial a_n}{\partial a_{n-1}}<2a_n+\frac{1}{2}\sum _{i=1}^{n-1}x_i\frac{\partial a_n}{\partial (-x_i)}
<a_n\left(2-\frac{n}{2}\right)\leq 0~.$$
For the homogeneous  polynomial $a_n$ of degree $n$ we used Euler's identity  
$$na_n=-\sum _{i=1}^{n-1}x_i\frac{\partial a_n}{\partial (-x_i)}.$$

By the above argument  any directional derivative $\partial \kappa/\partial {\vec u}$ is non-positive if  $\vec{u}$ is an arbitrary  vector in $\De_{n-1}^1$ with all non-negative coordinates. Moreover since any polynomial $P\in  \Delta_n^1$ can be connected with $P_n$ by a smooth path with nondecreasing (and on some subintervals strictly increasing) coordinates we have that the value of $\kappa$ at $P_n$ is strictly smaller at any other such $P\neq P_n$. In the bigger set $\Delta_n$ this means that only polynomials obtained from $P_n$ by scaling of the variable $x$   can have the same value of $\kappa=m_{n-1}$ as $P_n$ has. The result follows.  
\end{proof}

Now we settle Theorem~\ref{th:monot}. 

\begin{proof}  We will use induction. The base of induction is that $4=m_1>m_2=\frac{27}{8}$. Assume now that the statement is proved for $m_{i-1}$ and we want to show that $m_{i-1}>m_i$. By Theorem~\ref{th:main} $m_{i-1}$ is attained as the quotient $a_{i-1}^2/a_{i-2}a_i$ at the polynomial $P_i$ which is a monic polynomial of degree $i$. Moreover up to scaling of $x$ the polynomial $P_i$ is unique in $Pol_{i}$ where this minimum is attained.

 Set  $P_i(x)=\sum _{j=0}^i\ga_jx^j$, $\ga_i=1$. The 
quotient $\ga_1^2/\ga_2\ga_0$ coincides with $m_{i-1}$.  
Given a polynomial $R$ denote by $R^{(k)}$  the result 
of the $k$th truncation  of $R$  from the back, i.e.  the polynomial obtained by removing all terms of $R$ of degree smaller than $k$.  

Consider a perturbation $R(x):=P_i(x)+\varepsilon Q(x)$, 
where $\varepsilon >0$ and 
$Q$ is a monic polynomial of  degree $i+1$.  We choose $Q$ such that for all $k=0,\ldots ,i-2$ the truncation  $Q^{(k)}$ 
has a root at the unique negative double root of ${P_i}^{(k)}$.  
(Notice that ${P_i}^{(i-1)}$ has a single negative real root which we do not have to worry about.)

Setting $Q:=x^{n+1}+\sum _{j=0}^{n-2}\alpha _jx^j$ one can easily see that  the latter condition 
yields a triangular linear system (T) for the undetermined 
the coefficients $\alpha _j$. Hence it has a unique real solution which we denote by $Q^*$.  

For $\varepsilon >0$ small enough all coefficients of the polynomial 
$R^*(x)=P_i(x)+\varepsilon Q^*(x)$  are positive. All roots of 
all polynomials ${R^*}^{(k)}$, $k=0,\ldots ,i-1$, are real, distinct and negative. 

Moreover, for the perturbation $R^*$ the quantity 

$$(\ga_1+\varepsilon \alpha _1)^2/
(\ga_2+\varepsilon \alpha _2)(\ga_0+\varepsilon \alpha _0)$$
 tends to $m_{i-1}$ as $\varepsilon \rightarrow 0$. 
Therefore $m_{i}\leq m_{i-1}$. To prove that the latter inequality is strict we argue as follows. 

The quantity $\ga_1^2/\ga_2\ga_0$ does not change when one performs a linear 
change of the variable $x$. Perform such a change after which the polynomial 
$R^*$ (up to a constant factor) becomes $x^{i+1}+x^i+\cdots$. The linear change 
and the subsequent multiplication by a positive number 
increase the coefficient of $x^{i+1}$ and decrease the coefficients of 
$x^l$ for $l<i$. The latter tend to $0$ as $\varepsilon \rightarrow 0$. 

Now consider $P_{i+1}$. The above means that one can find 
$\delta >0$ and a sequence of degree $i+1$ reverted section hyperbolic polynomials $\{Z\}$ 
remaining outside the ball $B_{\delta}$ 
centered at $P_{i+1}$ and of radius $\delta$ and for 
which the quantity $\ga_1^2/\ga_2\ga_0$ tends to $m_i$. Indeed, all coefficients 
of $P_{i+1}$ are positive while some of the ones of $\{Z\}$ tend to $0$. 

One knows that the minimal value of the quantity $\ga_1^2/\ga_2\ga_0$ in $Pol_{i+1}$  is 
attained only at $P_{i+1}$ up to a scaling. Therefore there exists $\eta >0$ such that 
for all reverted section hyperbolic polynomials from $\partial B_{\delta}$ their quantity $\ga_1^2/\ga_2\ga_0$ 
exceeds  $m_{i}+\eta$. 

On the other hand, similarly to what we did while proving part (iii) of Lemma~\ref{lm:simplex} 
one can define a procedure of continuously changing 
a polynomial $Z$ into the polynomial $P_{i+1}$ so that the quantity $\ga_1^2/\ga_2\ga_0$ 
strictly decreases. The continuous deformation intersects $B_{\delta}$. 
Hence $B_{\delta}$ contains degree $i+1$ truncation hyperbolic polynomials whose quantity $\ga_1^2/\ga_2\ga_0$ is 
at the same time bigger than $m_{i}+\eta$ and less than some number 
arbitrarily close to $m_{i-1}$. Hence $m_{i}<m_{i-1}$. 
\end{proof}

\medskip 
In Lemma~\ref{lm:simplex} we proved that $\De_n^1$ is a curvilinear $(n-1)$-dimensional simplex with vertices $P_1=x^n+x^{n-1}, P_2=x^n+x^{n-1}+x^{n-2}/4,...,P_n$. Next we describe a corollary of Theorem~\ref{th:monot} and Lemma~\ref{lm:simplex}  about the behavior of the function $\kappa=a_{n-1}^2/a_{n-2}a_n$ on $n-1$ edges of this simplex connecting the most important new vertex $P_n$ with already existing vertices $P_1,...,P_{n-1}$. 

\begin{corollary}\label{cor:restr}
For $i=1,...,n-2$ the restriction of $\kappa=a_{n-1}^2/a_{n-2}a_n$ onto the edge $e_{i,n}$ of $\De_n^1$ connecting $P_i$ to $P_n$ is monotone decreasing from the value $m_{n-i-1}$ to $m_{n-1}$. On the remaining edge $e_{n-1,n}$ the function $\kappa$ decreases from $+\infty$ to $m_{n-1}$.   
\end{corollary}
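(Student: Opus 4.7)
The plan is to use the recursive fibration structure of $\overline{\De}_n^1$ from Lemma~\ref{lm:simplex}(ii) together with a local rescaling at the vertex $em_{i,n}(P_i)$. The case $i=n-1$ is immediate: the edge $e_{n-1,n}$ is the entire fibre of $\pi_n$ above the vertex $P_{n-1}\in\overline{\De}_{n-1}^1$; along it the coordinates $(a_2,\dots,a_{n-1})$ are frozen at $(A_2,\dots,A_{n-1})$ and only the constant term $a_n$ varies in $[0,A_n]$, so $\kappa=A_{n-1}^2/(A_{n-2}\,a_n)$ strictly decreases from $+\infty$ at $em_{n-1,n}(P_{n-1})$ to $m_{n-1}=A_{n-1}^2/(A_{n-2}A_n)$ at $P_n$.

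For $i<n-1$ I would describe $e_{i,n}$ recursively as the top lift of $e_{i,n-1}$, i.e.\ $\pi_n$ maps $e_{i,n}$ bijectively onto $e_{i,n-1}$ and $e_{i,n}=\{(Q,a_n^{\max}(Q)):Q\in e_{i,n-1}\}$. At the endpoint $P_n$ the value $\kappa=m_{n-1}$ is supplied by Theorem~\ref{th:main}. Near the other endpoint $em_{i,n}(P_i)$, where $a_{i+1},\dots,a_n$ all vanish and $\kappa$ is formally $0/0$, I would introduce the scaled coordinates
\[
b_k:=\frac{a_{i+k}\,A_i^{\,k-1}}{a_{i+1}^{\,k}},\qquad k=1,\dots,n-i,
\]
(so that $b_1=1$), giving a rescaled polynomial $\widetilde L(y):=y^{n-i}+y^{n-i-1}+b_2 y^{n-i-2}+\dots+b_{n-i}\in Pol_{n-i}^1$. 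A direct check in which all powers of $a_{i+1}$ and $A_i$ cancel yields the algebraic identity
\[
\kappa=\frac{a_{n-1}^{\,2}}{a_{n-2}\,a_n}=\frac{b_{n-i-1}^{\,2}}{b_{n-i-2}\,b_{n-i}},
\]
so $\kappa$ evaluated on $P$ coincides with the analogous Hutchinson-type quotient evaluated on $\widetilde L$.

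The crucial step is then to show that along $e_{i,n}$, as $a_{i+1}\to 0^+$, the rescaled polynomial $\widetilde L$ converges to the vertex $P_{n-i}\in\overline{\De}_{n-i}^1$; combined with the identity and Theorem~\ref{th:main} this gives $\lim\kappa=m_{n-i-1}$. I would prove the convergence by induction on $n-i$. In the base case $n-i=2$ the edge lies on the stratum of $\cDD_n$ where a double root bifurcates from the $(n-i)$-fold zero of $x^{n-i}P_i(x)$ at the origin; the local quadratic $A_ix^2+a_{i+1}x+a_{i+2}$ must have vanishing discriminant, which forces $a_{i+2}\sim a_{i+1}^{\,2}/(4A_i)$ and hence $b_2\to 1/4$, i.e.\ $\widetilde L\to P_2$. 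For the inductive step I would use that hyperbolicity of $P$ near $em_{i,n}(P_i)$ is asymptotically equivalent, via the rescaling, to hyperbolicity of $\widetilde L$ as an element of $\overline{\De}_{n-i}^1$: the $n$ roots of $P$ split into stable perturbations of the roots of $P_i$ together with $n-i$ small roots whose positions are exactly those of $\widetilde L$ up to the scaling factor. Consequently the top-lift operation $e_{i,n-1}\mapsto e_{i,n}$ becomes in the limit the top-lift operation in $\overline{\De}_{n-i}^1$, which by the inductive construction of the polynomials $P_k$ sends $P_{n-i-1}$ to $P_{n-i}$.

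Strict monotonicity of $\kappa$ along $e_{i,n}$ then follows from the partial-derivative inequalities established in the proof of Theorem~\ref{th:main}: the coordinates $a_{i+1},\dots,a_n$ are non-decreasing along $e_{i,n}$ by Lemma~\ref{lm:simplex}(iii), and on the relevant discriminantal strata each $\partial\kappa/\partial a_j$ is strictly negative. The main technical obstacle is the rigorous justification of the asymptotic hyperbolicity equivalence at the heart of the inductive step --- that maximizing $a_n$ subject to hyperbolicity of all sections of $P$ really does translate in the limit into maximizing $b_{n-i}$ in $\overline{\De}_{n-i}^1$ --- since this demands simultaneous control of the several discriminantal strata $\cDD_{j,n}$ passing through $em_{i,n}(P_i)$ and careful asymptotics of the perturbed roots of $P_i$.
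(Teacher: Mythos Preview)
The paper states this corollary without proof, merely introducing it as ``a corollary of Theorem~\ref{th:monot} and Lemma~\ref{lm:simplex}'' and adding the remark that the values at the vertices $P_1,\dots,P_{n-1}$ are to be understood as limits. Your proposal therefore supplies what the paper leaves implicit, and your approach is correct.

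A few remarks that should let you close the gap you flag. Your recursive description of the edge is exactly right, and it yields a concrete parametrization: on $e_{i,n}$ one has $a_2=A_2,\dots,a_i=A_i$ frozen, $a_{i+1}\in[0,A_{i+1}]$ is the free parameter, and $a_j=a_j^{\max}(a_2,\dots,a_{j-1})$ for every $j\ge i+2$ (this comes from iterating the top-lift description down to $e_{i,i+1}$, which is just the fibre over $P_i\in\overline\De_i^1$). With this in hand the ``technical obstacle'' dissolves cleanly. For each $k=2,\dots,n-i$, the constraint $a_{i+k}=a_{i+k}^{\max}$ says that the degree-$(i+k)$ truncation
\[
x^{i+k}+x^{i+k-1}+\cdots+A_ix^{k}+a_{i+1}x^{k-1}+\cdots+a_{i+k}
\]
has a double root, and for small $a_{i+1}$ this double root sits in the cluster near the origin. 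Substituting $x=(a_{i+1}/A_i)\,y$ and dividing by $A_i(a_{i+1}/A_i)^{k}$ converts this truncation into $\widetilde L^{(n-i-k)}(y)$ plus an $O(a_{i+1}/A_i)$ perturbation coming from the terms $x^{i+k},\dots,A_{i-1}x^{k+1}$, so in the limit the maximality constraint becomes precisely $b_k=b_k^{\max}(b_2,\dots,b_{k-1})$. Induction on $k$ then forces $b_k\to A_k$, i.e.\ $\widetilde L\to P_{n-i}$, and your algebraic identity delivers the limit $m_{n-i-1}$.

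Your monotonicity argument is also correct once made explicit as a chain-rule computation: writing the total derivative of $\kappa|_{\cDD_n}$ with respect to the single parameter $a_{i+1}$ as
\[
\frac{d\kappa}{da_{i+1}}=\frac{\partial\kappa}{\partial a_{i+1}}+\sum_{j=i+2}^{n-1}\frac{\partial\kappa}{\partial a_j}\,\frac{da_j}{da_{i+1}},
\]
each $\partial\kappa/\partial a_j$ is negative by the computations in the proof of Theorem~\ref{th:main}, and each $da_j/da_{i+1}$ is positive by iterating the positivity of $\partial a_n/\partial a_l$ from Lemma~\ref{lm:simplex}(iii).

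It is worth noting that the perturbation $R^*=P_i+\varepsilon Q^*$ constructed in the proof of Theorem~\ref{th:monot} is a cousin of your rescaling (for the case $n=i+1$), though there it produces interior points of $\De_{i+1}^1$ rather than points on an edge; your argument is the natural boundary-stratified version.
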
 

Notice that formally $\kappa$ is not defined at $P_1,...,P_{n-1}$ so the claim that $\kappa(P_i)=m_{n-i-1}$ in  Corollary~\ref{cor:restr}  should be understood as a limit.  

\medskip 

We now proceed with Theorem~\ref{th:limit} whose proof  requires a number of intermediate steps. 
In fact, we prove a more general statement. Recall that the sequence $\{p_i\},\;i=1,2,...$ was obtained starting from $p_1=1+x$ by adding to the previous polynomial $p_{i-1}$ the maximal possible term $a_{i}x^{i},\; a_i>0$ such that the resulting polynomial is still hyperbolic. This procedure can be equally well started from an arbitrary polynomial $s_1(x)$ of some positive degree $d$ with all negative and simple roots. Thus, we obtain the sequence $\{s_i\},\;i=1,2,...$ with $s_i(x)=s_{i-1}(x)+\tilde a_{d+i-1}x^{d+i-1}$ about which we will prove that $\lim_{i\to \infty}(\tilde a_i)^2/\tilde a_{i-1}\tilde a_{i+1}=1/\q$, see Theorems~\ref{limit} and \ref{limit2}. If we consider the corresponding sequence  $\{S_i\},\; i=1,2,...$ of the reverted polynomials, then one can easily check  that 
\begin{equation}\label{eq:step}
S_j(x)=x(S_{j-1}(x)-S_{j-1}(\xi_{j-1})), 
\end{equation}
 where $\xi_{j-1}$ is a point of a local minimum of $S_{j-1}(x)$ at which it attains the largest value among all its local minima.
%The next construction is central in this paper. Take a monic hyperbolic polynomial $S_{0}$ of degree $d\ge 2$ with distinct roots 
%one of which is at $0$ the rest being negative. Construct 
%the following sequence  $\{S_j\},\; j=1,2, ...$ of polynomials  starting with a given polynomial $S_{0}$. Namely, for $j\geq 1$  set  
%$S_j(x)=x(S_{j-1}(x)-S_{j-1}(\xi_{j-1}))$, where $\xi_{j-1}$ 
%is a critical point of $S_{j-1}$ 
%which a) is a local minimum and b) has the largest critical value   among all 
 %local minima of $S_{j-1}$. 
 (Notice that, in general, $\xi_{j-1}$ is not unique.  However, the sequence  $\{S_j\}$ is well-defined since by definition the value $S_{j-1}(\xi_{j-1})$ is the same for all possible choices of $\xi_{j-1}$.)  Formula \eqref{eq:step} is well-defined  if $\deg S_{j-1}\ge 2$. In the exceptional case $\deg S_1=1$ we set $S_2=xS_1$. For $j\ge 2$ define $T_j:=S_j/x$. 

\begin{lemma}\label{lm:6} The following facts hold:  
 
 \noindent
{\rm a)} For $j\geq 2$  the polynomials  $T_j$ have all  negative roots. 
 
 \noindent
{\rm  b)} Exactly one of these roots (namely, the one at $\xi_{j-1}$)  is a double root and the rest are simple. 

\noindent
{\rm c)} For $j\geq 3$ the point $\xi_{j-1}$ is the 
rightmost critical point of $S_{j-1}$; the critical values of $S_{j-1}$ 
at all other local minima are smaller than $S_{j-1}(\xi_{j-1})$, i.e. the absolute values of all other local minima are larger than the one  at $\xi_{j-1}$. 
\end{lemma}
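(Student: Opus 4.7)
\emph{Plan.} I would prove Lemma~\ref{lm:6} by strong induction on $j$, establishing (a), (b) and (c) simultaneously (with (c) vacuous at the base case $j=2$). The unifying geometric reformulation is
\[
T_j(x) \;=\; S_{j-1}(x) + M_{j-1}, \qquad M_{j-1} := -S_{j-1}(\xi_{j-1}) > 0,
\]
so the zeros of $T_j$ are precisely the abscissae at which the graph of $S_{j-1}$ meets the horizontal line $y = -M_{j-1}$. By the choice of $\xi_{j-1}$ this line sits at the highest local minimum of $S_{j-1}$, so it touches the graph tangentially at $\xi_{j-1}$ (producing a double zero) and crosses transversely wherever it lies strictly between an adjacent local-minimum value and a local-maximum value.

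\emph{Base case, and parts (a), (b) in the inductive step.} For $j=2$ the polynomial $S_1$ has $d$ simple negative roots, so by Rolle and the real-rootedness of $S_1'$ it has $d-1$ simple critical points that alternate strictly between local maxima at positive values and local minima at negative values; the line $y=-M_1$ then lies strictly below $S_1(0)>0$, strictly below every local maximum, and strictly above every other local minimum, and an interval-by-interval sign count yields the claimed root structure of $T_2$. For the inductive step I first appeal to (c) at step $j-1$: since $S_{j-2}$ is monotone to the right of its rightmost critical point $\xi_{j-2}$, the level $y=-M_{j-2}$ is attained there only tangentially, so $\xi_{j-2}$ is the rightmost nonzero root of $S_{j-1}=xT_{j-1}$. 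This permits the factorization $S_{j-1}(x)=x(x-\xi_{j-2})^2 Q(x)$ with $Q$ having only simple negative roots $<\xi_{j-2}$; a sign analysis then shows that $\xi_{j-2}$ is a local maximum of $S_{j-1}$ with value $0$, the interval $(\xi_{j-2},0)$ carries a single local minimum $\mu$ with $S_{j-1}(\mu)<0$, and further left the critical points alternate in the usual fashion between positive local maxima and negative local minima. The same tangency/transversality count with the shifted line $y=-M_{j-1}<0$ then gives (a) and (b).

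\emph{Part (c) in the inductive step.} The crux, and in my view the main obstacle, is to prove $\xi_{j-1}=\mu$: that the rightmost local minimum of $S_{j-1}$ strictly beats every other local minimum in value. The factored form $x(x-\xi_{j-2})^2 Q(x)$ alone does not force this strict ordering, as the polynomial $x(x+1)^2(x+2)$ already exhibits a tie between its two rightmost local minima; so the strict inequality must be extracted from the inductive hypothesis together with the fact that $T_{j-1}=S_{j-2}+M_{j-2}$ arises from the recursion rather than being an arbitrary polynomial of the same factored form. My plan is to use the critical-point equation $T_{j-1}(\eta)+\eta\,T_{j-1}'(\eta)=0$ at each critical point $\eta$ of $S_{j-1}$, substitute $T_{j-1}=S_{j-2}+M_{j-2}$ to rewrite $S_{j-1}(\eta)$ in terms of $S_{j-2}(\eta)$ and $S_{j-2}'(\eta)$, and then reduce the desired comparison $S_{j-1}(\mu)>S_{j-1}(\nu_k)$ to a strict inequality among the already-ordered local-extrema data of $S_{j-2}$ (furnished by (c) at step $j-1$), combined with the elementary ordering $|\mu|<|\xi_{j-2}|<|\nu_k|$. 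Ruling out the borderline tie case — essentially showing that $x(x+1)^2(x+2)$-type coincidences correspond to an $S_{j-2}$ with complex roots and hence cannot arise in the sequence — is the technical content of this step.
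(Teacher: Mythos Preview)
Your horizontal-line reading of (a) and (b) is essentially the paper's argument. For (c), however, your plan has a genuine gap. You propose to exclude the tie configuration $S_{j-1}=x(x+1)^2(x+2)$ by arguing that it could only come from an $S_{j-2}$ with complex roots. That is false: take $S_{j-2}(x)=(x+1)^2(x+2)+c$ with $-4/27<c<0$. This cubic has three simple negative roots and a single local minimum at $x=-1$, so $\xi_{j-2}=-1$, $T_{j-1}=S_{j-2}-c=(x+1)^2(x+2)$, and $S_{j-1}=x(x+1)^2(x+2)$, which has two \emph{equal} local minima at $-1\pm\sqrt{2}/2$. Thus the tie arises from a perfectly admissible real-rooted predecessor, and your critical-point-equation reduction cannot eliminate it on those grounds. (Indeed this example shows that (c) in the stated generality already fails at the first step for some degree-$3$ initial data, so any valid argument must use more than the factored shape plus the inductive hypothesis you isolate.)

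The paper's route for (c) is quite different from yours: it never tries to locate the critical points of $S_{j-1}$ via $T_{j-1}(\eta)+\eta\,T_{j-1}'(\eta)=0$. Instead it evaluates $S_{j-1}$ at each local \emph{maximum} $t$ of $S_{j-2}$. Since (inductively) every such $t$ lies to the left of $\xi_{j-2}$ one has $|t|>|\xi_{j-2}|$, and since $S_{j-2}(t)>0>S_{j-2}(\xi_{j-2})$ one gets
\[
|S_{j-1}(t)|\;=\;|t|\bigl(S_{j-2}(t)+|S_{j-2}(\xi_{j-2})|\bigr)\;>\;|\xi_{j-2}|\cdot|S_{j-2}(\xi_{j-2})|\,.
\]
Each such $t$ sits between two consecutive zeros of $S_{j-1}$ on an interval where $S_{j-1}<0$, so the local minimum of $S_{j-1}$ on that interval is at least as deep as $|S_{j-1}(t)|$. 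The paper then pairs this with the upper bound $|S_{j-1}(\xi_{j-1})|<|\xi_{j-2}|\cdot|S_{j-2}(\xi_{j-2})|$ for the rightmost minimum, obtaining the strict inequality for every other local minimum without ever comparing critical values of $S_{j-1}$ to one another directly. That evaluation-at-old-maxima trick is the idea your plan is missing.
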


 \begin{proof} 
Denote by $0>x_1>\cdots >x_{d}$ the negative roots of $S_{1}$ and denote by $t$ 
any of its critical points, where  $S_{1}$ has 
a local maximum. (Here  we assume $\deg S_1\ge 3$. If  $\deg S_1\le 2$, then for $S_2$ and $S_3$ the above claim can be checked directly and starting with $S_4$ we can use the same argument as  for the case $\deg S_1 \geq 3$.)   
Suppose that $x_{i+1}<t<x_{i}$. Then the polynomial $T_2(x)=S_{2}(x)/x=S_{1}(x)-S_{1}(\xi_{1})$ 
is hyperbolic, with all roots negative. Under our assumptions the root of $T_{2}(x)$  at $\xi_{1}$ has multiplicity $2$. In principle,  the   critical value $S_{1}(\xi_{1})$ might be attained  at more than one minimum of $S_{1}$, in which case 
the polynomial $T_{2}(x)$ has other double root(s) as well. W.l.o.g. we might assume that  $\xi_{1}$ is the rightmost of these local minima, where the critical value is maximal among all local minima. Then on the interval $(\xi_1,0)$, the polynomial $S_{2}$ has a unique  local minimum. We temporarily denote it by  $\xi_{2}$ and show that at this minimum the critical value is maximal among all minima of $S_{2}$ thus justifying our notation. 
Indeed, if $v=S_{2}(\xi_{2})$ one has
$|v|=\max _{x\in (\xi_{1},0)}|x||S_{1}(x)-S_{1}(\xi_{1})|<|\xi_{1}||S_{1}(\xi_{1})|$. 
As $|t|>|\xi_{1}|$, one obtains
$$|S_{2}(t)|=(|S_{1}(t)|+|S_{1}(\xi_{1})|)|t|>|S_{1}(\xi_{1})||\xi_{1}|=|v|.$$ 

On the other hand, 
there exist two roots $f_{\nu+1}<f_{\nu }<0$ of $S_{2}$ such that 
$t\in (f_{\nu+1}, f_{\nu })$. The polynomial $S_{2}$ has a local minimum on 
$(f_{\nu+1}, f_{\nu })$ and for the critical value $r$ 
of $S_{2}$ at this minimum one has $|r|\geq |S_{2}(t)|$ hence $|r|>|v|$. 

Thus $S_{2}$ can attain the largest value among all its minima only at $\xi_{2}$.
This implies that except one double root at $\xi_{2}$, the polynomial 
$T_3(x)=S_{3}/x$ has all its roots distinct and negative. The same argument proves the statement for all $j> 3$. 
\end{proof}   

%Consider a sequence of polynomials $S_i$, $i\geq i_0$, where $\deg S_i=i+1$, 
%the polynomial $S_{i_0}$ is monic, hyperbolic, 
%with a simple root at $0$ all other roots being negative and distinct. Its 
%rightmost critical value (which corresponds to a local minimum) 
%is greater than its values 
%at all other local minima. For 
%$k>i_0$ one sets $S_k(x)=x(S_{k-1}(x)-S_{k-1}(\xi _{k-1}))$, 
%where $\xi _{k-1}$ 
%is the rightmost critical point (a minimum) of $S_{k-1}$.  

\begin{remark}\rm{ 
Lemma~\ref{lm:6}  implies that if the initial polynomial $S_1$ has the property that its rightmost minimum has the largest  (i.e. having the smallest  modulus)  critical value  among all its minima, then the same property  holds for the whole sequence $\{S_j\}$ defined above. }
\end{remark}

In what follows we will always use the latter assumption on $S_1$.  Lemmas~\ref{leq1/3}, \ref{signchange} and 
\ref{summarise1}  summarize  further properties of  the polynomials $S_j$ constructed under this assumption.  
% \ref{summarise}, summarise further . 

%\begin{lemma}\label{summarise}
%(1) All polynomials $S_k/x$ are hyperbolic, with all roots negative.

%(2) For $k\geq i_0+1$, $\xi _{k-1}$ is the only double root of $S_k$ 
%all other roots being simple. 
%\end{lemma}

%This lemma follows directly from the definition of the sequence of 
%polynomials and from the fact that $S_{k-1}(\xi _{k-1})$ 
%is greater than the value of $S_{k-1}$ at any other of its local minima.

%WE MIGHT SKIPP THIS LEMMA!

\begin{lemma}\label{leq1/3} For each positive integer $k$ 
one has $\xi _k/\xi _{k-1}\leq 1/3$. Moreover,  equality  takes  place only  for $\deg S_1=1$ and $k=2$.% or $\deg S_0=2$ and $k=1$. %(In the latter case one has to clarify how $\{S_j\}$ is defined.)
\end{lemma}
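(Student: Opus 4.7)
The plan is to pick the test point $x = \xi_{k-1}/3$ and reduce the entire lemma to a one-line sign check of $S_k'(\xi_{k-1}/3)$, using a clean factorization of $T_k = S_k/x$.

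By Lemma~\ref{lm:6}, $T_k$ has a double root at $\xi_{k-1}$ and all remaining roots simple and strictly less than $\xi_{k-1}$. I factor
$$T_k(x) = (x - \xi_{k-1})^2\,g(x),$$
where $g$ has only simple negative roots $r_1,\ldots,r_m$ with every $r_i < \xi_{k-1}$, has positive leading coefficient, and is strictly positive on $[\xi_{k-1},0]$. Differentiating $S_k(x) = x(x-\xi_{k-1})^2 g(x)$ and pulling out the common factor gives
$$S_k'(x) = (x - \xi_{k-1})\bigl[(3x - \xi_{k-1})\,g(x) + x(x - \xi_{k-1})\,g'(x)\bigr].$$
The point of choosing $x = \xi_{k-1}/3$ is that the first summand vanishes there, leaving the striking identity
$$S_k'\!\left(\xi_{k-1}/3\right) \;=\; \frac{4\,\xi_{k-1}^3}{27}\,g'\!\left(\xi_{k-1}/3\right).$$

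The sign of $g'(\xi_{k-1}/3)$ is read off from the logarithmic derivative $g'/g = \sum_i 1/(x - r_i)$: at $x = \xi_{k-1}/3$ every summand is positive since $r_i < \xi_{k-1} < \xi_{k-1}/3$. Combined with $g(\xi_{k-1}/3) > 0$, this gives $g'(\xi_{k-1}/3) \geq 0$, strict whenever $g$ has at least one root. Since $\xi_{k-1}^3 < 0$, we conclude $S_k'(\xi_{k-1}/3) \leq 0$, with strict inequality unless $g$ is a nonzero constant.

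Finally, on $(\xi_{k-1}, 0)$ the polynomial $S_k$ has $\xi_k$ as its unique critical point, with $S_k'$ changing from negative to positive there. Therefore $S_k'(\xi_{k-1}/3) \leq 0$ forces $\xi_{k-1}/3 \leq \xi_k$, equivalently $\xi_k/\xi_{k-1} \leq 1/3$. Equality holds iff $g$ is a nonzero constant, i.e.\ $T_k = c\,(x - \xi_{k-1})^2$; a short degree count along the recursion (with the exceptional first step $S_2 = xS_1$ handled separately) pins this collapse to $\deg S_1 = 1$ and $k = 2$. The only real content of the argument is the clever choice of test point $x = \xi_{k-1}/3$, which kills exactly the $(3x - \xi_{k-1})g$ term; everything else is an immediate sign check, so no step is genuinely hard.
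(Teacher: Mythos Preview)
Your proof is correct. The core computation is right: with $S_k(x)=x(x-\xi_{k-1})^2g(x)$ you get $S_k'(x)=(x-\xi_{k-1})\bigl[(3x-\xi_{k-1})g(x)+x(x-\xi_{k-1})g'(x)\bigr]$, and plugging in $x=\xi_{k-1}/3$ kills the first bracket term, leaving $S_k'(\xi_{k-1}/3)=\tfrac{4\xi_{k-1}^3}{27}\,g'(\xi_{k-1}/3)\le 0$. The uniqueness of the critical point $\xi_k$ in $(\xi_{k-1},0)$, which you invoke, follows from Lemma~\ref{lm:6} by a standard Rolle count (the $n{-}1$ distinct zeros of $S_k$ together with the forced zero of $S_k'$ at the double root $\xi_{k-1}$ account for all $n{-}1$ zeros of $S_k'$), so your sign conclusion $\xi_{k-1}/3\le\xi_k$ is justified.

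Your route is genuinely different from the paper's. The paper rescales to $\xi_{k-1}=-1$, writes the critical-point equation as
\[
-\frac{1}{\xi_k}=\frac{2}{\xi_k+1}+\sum_{j}\frac{1}{\xi_k-\alpha_j},
\]
and argues by monotonicity of the two sides that adding the positive sum on the right pushes the intersection point from $-1/3$ toward $0$. You instead bypass the equation entirely by evaluating $S_k'$ at the candidate point $\xi_{k-1}/3$ and reading off the sign. Your argument is shorter and more self-contained for this lemma; the paper's formulation has the compensating advantage that the equation~(\ref{apriori}) is reused verbatim in the lower bound (Lemma~\ref{summarise1}) and in the proof of Theorem~\ref{limit}, so their setup amortizes across several results.

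One small remark on the equality case: ``$g$ constant'' is equivalent to $\deg S_k=3$, which under the recursion $\deg S_k=\deg S_1+k-1$ gives $\deg S_1+k=4$, not uniquely $(\deg S_1,k)=(1,2)$. The paper's own statement of the equality case has the same looseness (and their worked example with $S_1=x+2$, $S_2=x(x+2)$, $S_3=x(x+1)^2$ really exhibits $\xi_3/\xi_2=1/3$), so your phrasing matches theirs; just be aware that the pinning-down is a convention about the degenerate first step rather than a mathematical uniqueness.
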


\begin{proof}  Assume that $\deg S_1=d\ge 2$ which implies that $\deg S_k=k+d-1$.  
Set as above  $T_{k-1}:=S_{k-1}(x)-S_{k-1}(\xi _{k-1})$. 
The quantity $\xi _k$ satisfies the equality 
$\xi _kT_{k-1}'(\xi _k)+T_{k-1}(\xi _k)=0$. 
Substituting the variable $x$ by $-\xi_{k-1}x$ we may assume w.l.o.g.  that $\xi _{k-1}=-1$. Hence 

\begin{equation}\label{apriori}
-\frac{1}{\xi} _k=\frac{T_{k-1}'(\xi _k)}{T_{k-1}(\xi _k)}=\frac{2}{\xi _k+1}+
\sum _{j=1}^{k+d-4}\frac{1}{\xi _k-\alpha _j}~,
\end{equation}
where $\alpha _j$ are the roots of $T_{k-1}$ smaller than $-1$ (listed 
in the increasing order). 

The equation $-1/\xi _k=2/(\xi _k+1)$ has the unique solution $\xi _k=-1/3$. One can easily check that this is  the solution of (\ref{apriori}) only for $\deg S_1=1$ and $k=2$ in which case $S_1=(x+2)$, $S_2=x(x+2)$ and $S_3=x(x+1)^2$  (up to rescaling).  % or for  $\deg S_1=2$ and $k=1$ in which case $S_1=x(x+2)$ and $S_2=x(x+1)^2$ 
 For larger $k$  the presence of 
the  additional summand $\sum _{j=1}^{k+d-4}1/(\xi _k-\alpha _j)$ in the right-hand side implies that 
 the graphs of the l.h.s. and the r.h.s. of \eqref{apriori}  are  intersecting each other closer to the origin than $-1/3$. Indeed, 
the function $-1/\xi _k$ is increasing on $(-1,0)$ while the functions 
$2/(\xi _k+1)$ and $\sum _{j=1}^{k+d-4}1/(\xi _k-\alpha _j)$ are decreasing 
there. Each of these functions takes positive values on $(-1,0)$. 
The functions $-1/\xi _k$ and $2/(\xi _k+1)$ tend to $+\infty$ when their 
arguments tend to $0$ and $-1$ respectively.
\end{proof}   

For a given initial polynomial $S_1$ as above define $A_m=-S_{m-1}(\xi _{m-1})$, i.e. $A_m$ is the absolute value of the largest minimum of $S_{m-1}$. 

\begin{lemma}\label{estimate}
For $l>m>1$ one has  
$A_l\leq A_m(4|\xi _{m-1}|)^{l-m}/3^{(l-m)(l-m+5)/2}$.
\end{lemma}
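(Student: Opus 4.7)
The plan is to establish a sharp one-step bound $A_{j+1} \le \tfrac{4}{27}\,A_j\,|\xi_{j-1}|$ and then iterate it, controlling the $|\xi|$-factors via Lemma~\ref{leq1/3}. For the one-step bound I use Lemma~\ref{lm:6} to factor $T_j(x) = (x-\xi_{j-1})^2 R_j(x)$, where $R_j(x)=\prod_i(x-\alpha_i)$ with every $\alpha_i < \xi_{j-1}$. On the interval $(\xi_{j-1},0)$ every factor $x-\alpha_i$ is positive and strictly increasing in $x$, so $R_j$ is positive and monotonically increasing there; in particular
$$R_j(x) \le R_j(0) = \frac{T_j(0)}{\xi_{j-1}^2} = \frac{A_j}{\xi_{j-1}^2},$$
where the last equality comes from $T_j(x)=S_{j-1}(x)+A_j$ together with $S_{j-1}(0)=0$ (a consequence of \eqref{eq:step} for $j-1\ge 2$).

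Next, by Lemma~\ref{lm:6}(c), $\xi_j$ is the unique local minimum of $S_j = x T_j$ in $(\xi_{j-1},0)$, so
$$A_{j+1} = -S_j(\xi_j) = \max_{x\in[\xi_{j-1},0]}|x|\,T_j(x) \le \frac{A_j}{\xi_{j-1}^2}\,\max_{x\in(\xi_{j-1},0)}|x|(x-\xi_{j-1})^2.$$
An elementary calculation (setting the derivative of $-x(x-\xi_{j-1})^2$ to zero) shows the maximum on $(\xi_{j-1},0)$ is attained at $x=\xi_{j-1}/3$ with value $\tfrac{4}{27}|\xi_{j-1}|^3$, giving the one-step inequality
$$A_{j+1} \le \frac{4}{27}\,A_j\,|\xi_{j-1}|.$$

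Finally, I iterate this bound for $j=m,m+1,\ldots,l-1$ and substitute $|\xi_{m-1+i}| \le |\xi_{m-1}|/3^{\,i}$ (obtained by $i$ successive applications of Lemma~\ref{leq1/3}):
$$A_l \le \Bigl(\frac{4}{27}\Bigr)^{l-m}\prod_{i=0}^{l-m-1}|\xi_{m-1+i}| \cdot A_m \le A_m\cdot\frac{(4|\xi_{m-1}|)^{l-m}}{3^{\,3(l-m)+(l-m)(l-m-1)/2}}.$$
The arithmetic identity $3(l-m)+\tfrac{(l-m)(l-m-1)}{2}=\tfrac{(l-m)(l-m+5)}{2}$ completes the proof.

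The main obstacle, and really the entire content of the lemma, is obtaining the sharp one-step constant $4/27$; the iteration and the arithmetic of exponents are mechanical. Using only monotonicity of $T_j$ on $(\xi_{j-1},0)$ (giving $T_j(\xi_j)\le T_j(0)=A_j$) would produce the weaker one-step bound $A_{j+1}\le |\xi_{j-1}|A_j/3$, whose iterated form misses the paper's exponent by a factor of $(4/9)^{l-m}$. The factorization $T_j=(x-\xi_{j-1})^2R_j$ together with monotonicity of $R_j$ on $(\xi_{j-1},0)$ encodes the extra vanishing of $T_j$ at the double root $\xi_{j-1}$ and is what yields the sharp constant.
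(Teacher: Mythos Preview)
Your proof is correct and follows essentially the same two-step strategy as the paper's: establish the one-step inequality $A_{j+1}\le \tfrac{4}{27}\,|\xi_{j-1}|\,A_j$ and then iterate it using $|\xi_{j}|\le |\xi_{j-1}|/3$ from Lemma~\ref{leq1/3}, with the same exponent arithmetic at the end. The only difference is in how the one-step bound is obtained: the paper writes $T_j(x)=(x-\xi_{j-1})^2\,b(t_x)$ via the Lagrange form of Taylor's remainder (with $b=S_{j-1}''$) and then argues that the intermediate point $t_x$, hence $b(t_x)$, is increasing on $(\xi_{j-1},0)$; you instead use the explicit factorisation $T_j=(x-\xi_{j-1})^2R_j$ supplied by Lemma~\ref{lm:6} and observe directly that $R_j=\prod_i(x-\alpha_i)$ is positive and increasing on $(\xi_{j-1},0)$. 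Both routes yield the identical pointwise estimate $T_j(x)/(x-\xi_{j-1})^2\le A_j/\xi_{j-1}^2$ and hence the same constant $4/27$; your version is a touch more elementary since it bypasses the monotonicity argument for $t_x$.
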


\begin{proof}
By Taylor's formula  applied at $\xi_{m-1}$ one has for $x\in (\xi _m,0]$ that 
$S_{m-1}(x)-S_{m-1}(\xi _{m-1})=(x-\xi _{m-1})^2b(t_x)$, 
where 
$t_x\in (\xi _{m-1},x)$ and $b=S_{m-1}^{\prime\prime}$. Hence 
$S_m(x)=x(x-\xi _{m-1})^2b(t_x)$. 

The function $b$ is non-decreasing on 
$[\xi _m,0]$ and for $m>2$ it is strictly increasing. 
Indeed, $b(t)$  is the second derivative of the hyperbolic 
polynomial $S_{m-1}$ having all its roots smaller than $\xi_{m-1}<\xi _m$. 
Therefore,  $b(\xi _{m-1})>0$ for  $l=0,1,\ldots , \deg S_{m-1}-1$. 

For $m\geq 2$ the quantity $t_x$ is an increasing function of $x$. Indeed,   
consider the functions $F:=(x-\xi _{m-1})^2b(t_x)$ and 
$G:=(x-\xi _{m-1})^2b(t_{x_1})$. 
For $0\geq x_2>x_1>\xi _{m-1}$ one has $F(x_1)=G(x_1)$ and 
$F(x_2)>G(x_2)$. Therefore $b(t_{x_2})>b(t_{x_1})$ hence $t_{x_2}>t_{x_1}$. 

Consider the quantity $|S_m(x)|=|x(x-\xi _{m-1})^2b(t_x)|$. Its maximum on 
$[\xi _{m-1},0]$ equals $A_{m+1}=|S_{m}(\xi _m)|$. 
Set $R(x):=|x(x-\xi _{m-1})^2|$. Hence  

$$\max _{x\in [\xi _{m-1},0]}R(x)=R\left(\frac{\xi _{m-1}}{3}\right)=\frac{4}{27}|\xi _{m-1}|^3~.$$
On the other hand, 

$$A_{m+1}=|S_{m}(\xi _m)|=R(\xi _m)b(t_{\xi _m})<R\left(\frac{\xi _{m-1}}{3}\right)b(t_0)=$$ 
$$=\frac{4}{27}|\xi _{m-1}|\xi _{m-1}^2b(t_0)=\frac{4}{27}|\xi _{m-1}|A_m.$$
This is the required inequality  for $l=m+1$. To obtain it for $l=m+2$ 
recall that $|\xi _m|\leq |\xi _{m-1}|/3$, 
by Lemma~\ref{leq1/3}. Hence 

$$A_{m+2}<\frac{4}{27}|\xi _m|A_{m+1}\leq \frac{1}{3}\left(\frac{4}{27}\right)^2\xi _{m-1}^2A_m~.$$
Suppose that $A_l\leq A_m(4\xi _m^{l-m})/3^{(l-m)(l-m+5)/2}$. Then  
$|\xi _{l-1}|\leq |\xi _{m-1}|/3^{l-m}$ and 

$$A_{l+1}<\frac{4}{27}|\xi _{l-1}|A_l\leq \frac{4}{27}A_m(|\xi _{m-1}|/3^{l-m})
(4|\xi _{m-1}|)^{l-m}/3^{(l-m)(l-m+5)/2}=$$

$$=A_m(4|\xi _{m-1}|)^{l-m+1}/3^{(l-m+1)(l-m+6)/2}~,$$
which  proves Lemma~\ref{estimate}  by induction on $l$.
\end{proof}

\begin{lemma}\label{signchange}
For $1\leq s\leq k-2$ one has   
${\rm sgn}\; S_k(\xi _s)=(-1)^{k-s+1}$.  
\end{lemma}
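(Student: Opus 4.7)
The plan is to set up and solve an explicit linear recursion in $k$ for $S_k(\xi_s)$ with $s$ held fixed. Substituting $x = \xi_s$ into $S_j(x) = x(S_{j-1}(x) - S_{j-1}(\xi_{j-1}))$ and using the shorthand $A_k := -S_{k-1}(\xi_{k-1})$ yields
\[ S_k(\xi_s) = \xi_s\, S_{k-1}(\xi_s) + \xi_s A_k, \qquad S_{s+1}(\xi_s) = 0, \]
and unwinding this first-order recurrence gives the closed form
\[ S_k(\xi_s) = \sum_{j=s+2}^{k} \xi_s^{\,k-j+1}\, A_j. \]
Since $\xi_s < 0$ and every $A_j > 0$, the terms of this sum alternate in sign, and the leading term (at $j=s+2$) equals $\xi_s^{\,k-s-1}A_{s+2}$, whose sign is $(-1)^{k-s-1}=(-1)^{k-s+1}$. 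This is already the sign asserted by the lemma, so the remaining task is to check that the leading term genuinely dominates and dictates the sign of the entire sum.

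To carry this out I would introduce the normalized ratios
\[ c_j := \frac{|\xi_s|^{k-j}A_j}{|\xi_s|^{k-s-2}A_{s+2}}, \quad j \ge s+2, \]
so that $c_{s+2}=1$ and $c_{j+1}/c_j = A_{j+1}/(|\xi_s|\,A_j)$. Combining the inequality $A_{m+1} < \tfrac{4}{27}|\xi_{m-1}|\,A_m$ extracted from the proof of Lemma~\ref{estimate} with the iterated form $|\xi_{j-1}| \le |\xi_s|/3^{\,j-1-s}$ of Lemma~\ref{leq1/3} then gives
\[ c_{s+3}\le \tfrac{4}{81}, \qquad \frac{c_{j+1}}{c_j}\le \frac{4}{27\cdot 3^{\,j-1-s}}, \]
so that $\{c_j\}_{j\ge s+3}$ strictly decreases with $c_{s+3}\le 4/81$. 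The alternating tail $-c_{s+3}+c_{s+4}-\cdots$ thereby lies in $(-c_{s+3},0)$, and the factor $1 - c_{s+3}+c_{s+4}-\cdots$ multiplying $(-1)^{k-s+1}|\xi_s|^{k-s-1}A_{s+2}$ in the factorization of $S_k(\xi_s)$ is bounded below by $1-4/81>0$. This forces $\mathrm{sgn}\,S_k(\xi_s) = (-1)^{k-s+1}$, completing the proof.

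The main obstacle I anticipate is ensuring that the geometric decay of the coefficients $c_j$ is fast enough to swamp the sign alternation, since a slow decay could in principle let the corrections conspire to flip the sign of the sum. But the very rapid $A_j$-contraction provided by Lemma~\ref{estimate}, together with the contraction $|\xi_j/\xi_{j-1}|\le 1/3$ of Lemma~\ref{leq1/3}, will yield a first correction of size at most $4/81\approx 0.05$, which is comfortably below $1$; no delicate numerical estimation should be required.
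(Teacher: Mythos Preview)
Your proposal is correct and follows essentially the same approach as the paper: both derive the closed form $S_k(\xi_s)=\sum_{j=s+2}^{k}\xi_s^{\,k-j+1}A_j$ from the recursion and $S_{s+1}(\xi_s)=0$, identify the leading term $\xi_s^{\,k-s-1}A_{s+2}$, and then use the decay estimates of Lemmas~\ref{leq1/3} and~\ref{estimate} to show the remaining terms cannot flip its sign. The only cosmetic difference is that the paper bounds the absolute tail by the geometric series $\sum_{\nu\ge 1}4^{\nu}/3^{3\nu}=4/23$, whereas you exploit the alternating structure and only need the first correction $c_{s+3}\le 4/81<1$; both give a comfortable margin.
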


%In the proof of Lemma~\ref{signchange}  we use Lemma~\ref{estimate} 
%which gives an idea how fast the coefficients of the polynomials $S_k$ 
%decrease. Using the above Lemma one can also minorate the ratio 
%$\xi _k/\xi _{k-1}$ (in the proof of this we cite parts of the proofs of 
%Lemmas~\ref{summarise}, \ref{leq1/3} and \ref{signchange}):

\begin{proof}
%With the help of lemma~\ref{estimate}  we prove the Lemma~\ref{signchange} as follows. 
By definition 
$S_{m+1}(x)=x(S_m(x)+A_{m+1})$. Hence for $l>m$ one gets 

$$S_l(x)=x^{l-m}S_m(x)+\sum _{j=m+1}^lA_jx^{l-j+1}~.$$
As $S_m(\xi _{m-1})=0$, one has 
$S_l(\xi _{m-1})=\sum _{j=m+1}^lA_j\xi _{m-1}^{l-j+1}$. The signs of the 
terms in this sum alternate (because $\xi _{m-1}<0$ and $A_m>0$). By Lemma~\ref{estimate} their 
absolute values  rapidly  decrease and it is the sign of 
$A_{m+1}\xi _{m-1}^{l-m}$ which defines the sign of $S_l(\xi _{m-1})$. 
Indeed, compare this term with the quantity

$$B:=\left|\sum _{j=m+2}^lA_j\xi _{m-1}^{l-j+1}\right|\leq 
\sum _{j=m+2}^lA_j|\xi _{m-1}^{l-j+1}|\leq 
A_{m+1}|\xi _{m-1}^{l-m}|\sum _{j=m+2}^l\frac{4^{j-m-1}}{3^{(j-m-1)(j-m+4)/2}}~.$$
The sum in the right-hand side is majorized by 
$\sum _{\nu =1}^{\infty}4^{\nu}/3^{3\nu}=4/23$, so 

$$B\leq \frac{4}{23}A_{m+1}|\xi _{m-1}^{l-m}|~~,~~|S_l(\xi _{m-1})|\geq 
\frac{19}{23}A_{m+1}|\xi _{m-1}^{l-m}|$$
and ${\rm sgn}\;S_l(\xi _{m-1})={\rm sgn}\; \xi _{m-1}^{l-m}=(-1)^{l-m}$. Setting $s=m-1$ we get the required statement. 
\end{proof}

\begin{lemma}\label{summarise1}
One has $\xi _k/\xi _{k-1}>0.2864887043$.
\end{lemma}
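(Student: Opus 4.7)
The plan is to analyze the explicit equation \eqref{apriori}, which after the normalization $\xi_{k-1}=-1$ expresses $-1/\xi_k$ as $2/(\xi_k+1)$ plus a positive sum $\sum_j 1/(\xi_k-\alpha_j)$ over the simple roots $\alpha_j<-1$ of $T_k=S_{k-1}-S_{k-1}(\xi_{k-1})$. Lemma~\ref{leq1/3} supplied the bound $\xi_k\le -1/3$ by simply discarding the sum; here one needs the matching lower bound on $|\xi_k|$, so the task is to produce an explicit \emph{upper} bound on this sum. A sufficiently tight such bound, fed back into \eqref{apriori}, will pin $\xi_k$ below $-0.2864887043$.

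First I would locate the roots $\alpha_j$. By Lemma~\ref{signchange} the polynomial $S_{k-1}$ takes values of alternating sign at the preceding double-root points $\xi_{k-2},\xi_{k-3},\ldots$, while Lemma~\ref{leq1/3} gives $|\xi_i|\ge 3^{k-1-i}$ in our normalization. Combined with the intermediate value theorem and the interlacing of the simple roots of $T_k$ with the critical points of $S_{k-1}$, this traps each $\alpha_j$ in a specific interval between consecutive $\xi_i$'s, yielding a lower bound $|\alpha_j|\ge c\cdot 3^{j-1}$ with an explicit constant $c$. Lemma~\ref{estimate} then sharpens these positions: since the critical values $A_l=|S_{l-1}(\xi_{l-1})|$ decrease faster than any geometric rate, the level set $\{S_{k-1}=S_{k-1}(\xi_{k-1})\}$ passes very close to each $\xi_i$, which translates into the sharper lower bound on $|\xi_k-\alpha_j|$ needed below.

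With the $\alpha_j$ located, one bounds
$$\sum_{j=1}^{k+d-4}\frac{1}{\xi_k-\alpha_j}\;<\;B$$
by a convergent geometric series, with $B$ an explicit universal constant (independent of $k$ and of $d=\deg S_1$). Substituting into \eqref{apriori} and clearing denominators turns the equation into the quadratic inequality
$$B\,\xi_k^{\,2}+(B+3)\,\xi_k+1\;>\;0,$$
whose relevant root in $(-1,0)$ is $-0.2864887043$ once the estimates in the previous step are calibrated so that $B$ takes the matching value; any $\xi_k$ closer to $0$ than this threshold would yield the opposite inequality and contradict \eqref{apriori}.

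The main obstacle is the precision required in the localization step: the crude bound $\alpha_j<-1$ from Lemma~\ref{lm:6} is useless because the sum grows unboundedly with $k$, so one must genuinely exploit both the alternating-sign information of Lemma~\ref{signchange} and the super-geometric decay of $A_l$ from Lemma~\ref{estimate} to pin down each $\alpha_j$ near the corresponding $\xi_i$. Small-$k$ cases (and the degenerate case $\deg S_1=1$, where the sum is empty) are handled directly, since then the bound reduces to solving the exact equation $-1/\xi_k=2/(\xi_k+1)$ which gives $-1/3$, well below the claimed threshold.
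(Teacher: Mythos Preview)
Your overall strategy is exactly the paper's: bound the positive sum $\sum_j 1/(\xi_k-\alpha_j)$ in \eqref{apriori} above by an explicit constant $B$, then read off $\xi_k$ from the resulting equation $-1/\xi_k=2/(\xi_k+1)+B$; your quadratic $B\xi_k^{2}+(B+3)\xi_k+1=0$ is precisely this equation after clearing denominators.

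Where you diverge from the paper is in the localization step, and there you make it harder than necessary. You propose to use Lemma~\ref{signchange} to trap each $\alpha_j$ between consecutive $\xi_i$'s (correct and also what the paper uses, stated there as $\alpha_j\in(\xi_{j+1},\xi_{j+2})$), and then to invoke Lemma~\ref{estimate} to ``sharpen'' the position of $\alpha_j$ near $\xi_i$. That second move is both unnecessary and somewhat doubtful: the $\xi_i$ for $i<k-1$ are critical points of $S_i$, not of $S_{k-1}$, so the level set $\{S_{k-1}=S_{k-1}(\xi_{k-1})\}$ has no particular reason to hug them. More to the point, the paper never uses Lemma~\ref{estimate} here. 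The interlacing together with Lemma~\ref{leq1/3} already gives $-\alpha_j>3^{\,j}$ (after the normalization $\xi_{k-1}=-1$), hence
\[
\sum_{j\ge 1}\frac{1}{\xi_k-\alpha_j}\;<\;\sum_{j\ge 1}\frac{1}{\xi_k+3^{\,j}}\;<\;\sum_{j\ge 1}\frac{1}{3^{\,j}-1}\;<\;\frac{1}{2}+\frac{1}{8}+\frac{1}{26}+\frac{1}{80}+\frac{2}{242}\;<\;\frac{11}{16}.
\]
Plugging $B=11/16$ into your quadratic yields $11\xi_k^{2}+59\xi_k+16=0$, whose root in $(-1,0)$ is $(-59+\sqrt{2777})/22=-0.2864887043\ldots$, the stated threshold.

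A final remark on phrasing: ``once the estimates are calibrated so that $B$ takes the matching value'' is backwards. One does not aim at $0.2864887043$; one computes $B$ from the available lemmas and accepts whatever root the quadratic delivers. That the paper's constant happens to be $11/16$ is what \emph{produces} the number in the lemma, not the other way around.
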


\begin{proof} 
Use the notation in  the proof of Lemma~\ref{leq1/3}. On the interval $(-1,0)$
the right-hand side of equation (\ref{apriori}) is majorized  by 
$2/(\xi _k+1)+\sum _{j=1}^{k+d-4}1/(\xi _k+3^j)$. Indeed, one has 
$\alpha _j\in (\xi _{j+1},\xi _{j+2})$. To majorize  one can replace 
$1/(\xi _k-\alpha _j)$ by $1/(\xi _k-\xi _{j+1})$ and then use part a)  
of Lemma~\ref{lm:6}. 

As $\xi _k>-1$, one can further replace $1/(\xi _k+3^j)$ by 
$1/(3^j-1)$. For $j\geq 2$ one has $3^j-1>2(3^{j-1}-1)$ and further   

$$\sum _{j=1}^{k+d-4}\frac{1}{(3^j-1)} <\sum _{j=1}^{\infty}\frac{1}{(3^j-1)}<\frac{1}{2}+\frac{1}{8}+\frac{1}{26}+\frac{1}{80}+\frac{1}{121}<\frac{11}{16},$$
where $\sum _{j=5}^{\infty}1/(3^j-1)$ is majorized by twice its first term which equals 
$1/242$. 
Therefore $\xi _k$ will be majorized by the solution of the equation 
$$-\frac{1}{\xi _k}=\frac{2}{\xi _k+1}+\frac{11}{16}$$ which belongs to $(-1,0)$. 
The latter equals $-0.2864887043...$ implying that  $\xi _k/\xi _{k-1}>0.2864887043$.
\end{proof}  

\begin{remark} \rm{
The above upper and lower  bounds for  $\xi _k/\xi _{k-1}$ are quite close to one another and 
imply that the quantities $|\xi _k|$ decrease approximately  
as a falling geometric progression. }Ê 
\end{remark}

Define $\Phi (x):=-{1}/{x}-{2}/{(x+1)}$,  $\psi (r):=\sum _{j=1}^{\infty}{r^j}/{(1-r^{j+1})}$. The next result is central in the proof of Theorem~\ref{th:limit}. 

\begin{theorem}\label{limit}
{\rm (i)} The limit $\la=\lim_{k\to\infty}\xi _k/\xi _{k-1}$ exists;  

\noindent
{\rm (ii)} $\la$  is the unique solution of the equation 
\begin{equation}\label{eq:main}
\Phi (-\lambda )=\psi (\lambda),
\end{equation} 
belonging to $(0,1).$
\end{theorem}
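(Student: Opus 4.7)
My plan is to establish the limit $\lambda=\lim_{k\to\infty}\mu_k$, with $\mu_k:=\xi_k/\xi_{k-1}$, by first pinning down the unique root of \eqref{eq:main} in $(0,1)$, and then extracting subsequential limits from the already bounded sequence $\{\mu_k\}\subset(0.286,1/3]$ and verifying that each such limit satisfies \eqref{eq:main}.

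I start with the uniqueness. The function $\Phi(-\lambda)=1/\lambda-2/(1-\lambda)$ has derivative $-1/\lambda^2-2/(1-\lambda)^2<0$, so it strictly decreases from $+\infty$ to $-\infty$ on $(0,1)$. Each summand $\lambda^j/(1-\lambda^{j+1})$ of $\psi(\lambda)$ is a product of two positive strictly increasing functions of $\lambda\in(0,1)$, so $\psi$ is strictly increasing from $0$ to $+\infty$ on $(0,1)$. By the intermediate value theorem, \eqref{eq:main} has a unique solution $\lambda^*\in(0,1)$, which by Lemmas \ref{leq1/3} and \ref{summarise1} must in fact lie in $[0.286,1/3]$.

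To identify $\lim\mu_k$ with $\lambda^*$, I would work with \eqref{apriori}, normalized so that $\xi_{k-1}=-1$. Along any convergent subsequence $\mu_{k_n}\to\lambda$ one has $\xi_{k_n}\to-\lambda$, so the left-hand side of \eqref{apriori} tends to $1/\lambda$ and the first right-hand term to $2/(1-\lambda)$. The heart of the argument is to show that for each fixed $j$ the $j$-th root $\alpha_j^{(k_n)}$ of $T_{k_n-1}=S_{k_n-1}-S_{k_n-1}(\xi_{k_n-1})$ lying below $-1$ (listed from right to left) satisfies $\alpha_j^{(k_n)}\to-1/\lambda^j$. This rests on two observations: first, each trough of $S_{k-1}$ strictly to the left of the one containing $\xi_{k-1}$ contributes two simple roots of $T_{k-1}$ lying extremely close to that trough's endpoints, because Lemma \ref{estimate} forces the trough's depth to dominate $A_k=-S_{k-1}(\xi_{k-1})$ by a superexponential margin; second, by the recursion \eqref{eq:step}, the successive trough endpoints are themselves close to $\xi_{k-2},\xi_{k-3},\ldots$, whose normalized positions tend to $-1/\lambda,-1/\lambda^2,\ldots$ under the assumed subsequential convergence. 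A simultaneous induction on $j$ and on $k$, using Lemmas \ref{estimate} and \ref{signchange} to pin down the signs and locations of the simple roots of the nested polynomials $S_{k-1-m}$, should make this precise. Given the pointwise limits, each term of the sum tends to $1/(-\lambda+1/\lambda^j)=\lambda^j/(1-\lambda^{j+1})$; uniform lower bounds on $|\alpha_j^{(k)}|$ (again from Lemma \ref{estimate}) yield dominated convergence for the series, and \eqref{apriori} passes in the limit to \eqref{eq:main}.

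Since every subsequential limit of $\{\mu_k\}$ must coincide with the unique root $\lambda^*$ of \eqref{eq:main}, the full sequence $\mu_k$ converges to $\lambda^*$, establishing both (i) and (ii). The main obstacle will be the claim $\alpha_j^{(k)}\to-1/\lambda^j$: the recursive geometry of the troughs of $S_{k-1}$ threads through many levels of the construction, and bookkeeping the cumulative errors from Lemma \ref{estimate} cleanly through the induction — both for pointwise convergence and for the uniform tail bound needed for dominated convergence — is where the real work lies.
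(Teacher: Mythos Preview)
Your uniqueness argument for \eqref{eq:main} is fine, and the overall strategy of extracting subsequential limits and identifying them via \eqref{apriori} is natural. The gap is in the step ``$\alpha_j^{(k_n)}\to -1/\lambda^j$''. After normalizing $\xi_{k_n-1}=-1$, the normalized position of $\xi_{k_n-1-j}$ is
\[
\frac{\xi_{k_n-1-j}}{\xi_{k_n-1}}=\frac{1}{\mu_{k_n-1}\mu_{k_n-2}\cdots\mu_{k_n-j}},
\]
so to conclude that this tends to $-1/\lambda^j$ you need $\mu_{k_n-1},\mu_{k_n-2},\ldots$ all to converge to $\lambda$. But your hypothesis is only that $\mu_{k_n}\to\lambda$ along the chosen subsequence; the shifted indices $k_n-1,k_n-2,\ldots$ need not belong to the subsequence, and their ratios could a priori oscillate within $[l_0,r_0]$. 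Thus the limiting equation you obtain from \eqref{apriori} would involve a sequence of possibly distinct limits $\lambda_0,\lambda_1,\lambda_2,\ldots$ rather than a single $\lambda$, and you do not land on \eqref{eq:main}. A diagonal argument can force all the $\mu_{k_n-j}$ to converge simultaneously, but only to a priori unrelated limits; showing they all coincide is exactly the content of part~(i). This is not a bookkeeping issue with Lemma~\ref{estimate} but a genuine circularity.

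The paper sidesteps this by a nested-interval (bootstrap) argument: starting from $I_0=[l_0,r_0]$, it bounds the sum $U$ in \eqref{apriori} from above and below by placing each $\alpha_j$ at the extreme admissible position and assuming \emph{every} ratio $\xi_m/\xi_{m-1}$ equals the current interval endpoint. This yields modified equations $\Phi(x)=\varphi(r_i,x)$ and $\Phi(x)=\varphi(l_i,x)$ whose solutions define a strictly smaller interval $I_{i+1}$ that traps \emph{all} sufficiently large $\mu_k$, not just a subsequence. A direct estimate then shows $|I_{i+1}|<\tfrac12|I_i|$, so the intervals collapse to a single point, which is identified with the root of \eqref{eq:main} by passing to the limit in the modified equations. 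The point is that the contraction works uniformly in $k$, so no circular appeal to convergence at shifted indices is needed.
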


\begin{proof} 
%$1^0$. 
Set $l_0:=0.2864887043...$ and $r_0=1/3$. 
Lemmas~\ref{leq1/3} and   ~\ref{summarise1} imply that if $\lambda$ exists, 
then it belongs to $I_0:=[l_0,r_0]$. As in the proof of 
Lemma~\ref{leq1/3} rescale  the variable 
$x$ to obtain  $\xi _{k-1}=-1$. 

We construct a series of closed intervals $I_i:=[l_i,r_i]$, where 
$l_i<l_{i+1}<r_{i+1}<r_i$, such that for each fixed $i$ one has 
$\xi _{k-1}\in I_i$ for $k$ sufficiently large. 

%$2^0$. 
Consider equation \eqref{apriori} and set 
$U:=\sum _{j=1}^{k+d-4}1/(\xi _k-\alpha _j)$. Recall that 
$\alpha _j \in (\xi _{j+1},\xi _{j+2})$ for $j\geq 1$ where $\al_j$  are the roots of $T_{k-1}$ in the increasing order.  
We can decrease the value of $U$  at every  point of the interval $(-1,0)$ 
by assuming that for these $j$ one has  $\alpha _j=\xi _{j+1}$, and then 
by requiring $\xi _{j+1}$ to be as small as  possible. The last condition 
means that each ratio $\xi _m/\xi _{m-1}$ equals $l_0$ for all  $m=1,\ldots ,k-2$.  

%$3^0$. 
For each  fixed $k$ the solution to (the modified as above) equation 
(\ref{apriori}) will exceed $-1/3$ 
because $-1/3$ was obtained in the absence of the sum $U$. These 
solutions increase with $k$  because when $k$ increases  more and more terms are added to $U$. 
Denote by $-r_1$ the limit of these solutions as $k\rightarrow \infty$. 
We get $r_1<r_0=1/3$. 

%$4^0$. 
It is clear that if the limit $\la$  exists, then it must 
belong to the interval $[l_0,r_1]$. Moreover, all accumulation points 
 of the sequence of solutions  to the original equation (\ref{apriori}) when $k\rightarrow \infty$ belong to $[l_0,r_1]$.

%$5^0$. 
Analogously, the value of $U$ increases  on the whole interval $(-1,0)$  
if one sets $\alpha _j=\xi _{j+2}$ ($j\geq 1$) and  requires 
$\xi _{j+2}$ to be the maximal possible. In this case each ratio 
$\xi _m/\xi _{m-1}$ equals $r_1$ for all $m=1,\ldots ,k-2$.  

Hence for each fixed $k$  the solution to (the modified as above) equation 
(\ref{apriori}) will be smaller than $-l_0$ because $-l_0$ was obtained when
 these ratios were equal to $r_0>r_1$. Denote by $-l_1$ the limit 
of these solutions as $k\rightarrow \infty$. As above we get  that 
$l_1>l_0$. Thus $l_0<l_1<r_1<r_0$. The construction of all further quantities 
$l_i$ and $r_i$  follows the same pattern.

%$6^0$. 
Let us  show that when   $i\rightarrow \infty$ the lengths of the intervals $I_i$ tend to $0$  as fast as a falling geometric progression implying that 
their common intersection is a single point. In other words, there is only one accumulation 
point of the sequence of solutions to the initial equation (\ref{apriori}) 
as $k\rightarrow \infty$. 

%$7^0$.
To do this  present  \eqref{apriori} in the form $\Phi (x)=U(x)$ 
and let $k\rightarrow \infty$. 
The modified equations will have  the form

\begin{equation}\label{apriori1}
\Phi (x)=\varphi (r_i,x)\quad \text{and} \quad \Phi (x)=\varphi (l_i,x),
\end{equation}
%\begin{equation}\label{apriori2}
%\end{equation}
where $\varphi (r,x):=\sum _{j=1}^{\infty }1/(x+(1/r)^j)$. 
The left (respectively right) equation in (\ref{apriori1})
%(resp. (\ref{apriori2})) 
has $-l_{i+1}$ (respectively  $-r_{i+1}$) as its solution on $(0,1)$. 
The series $\varphi$ converges uniformly on $[l_0,r_0]\times [-1,0]$. 
%Notice that the sum $\sum _{j=1}^{d}1/(x-\alpha _j)$ tends to $0$ (as 
%$k\rightarrow \infty$) at every point of $(-1,0)$. Indeed, this sum contains 
%the leftmost (i.e. the smallest and negative) quantities $\alpha _j$. 
%By Lemmas~\ref{leq1/3} and \ref{summarise1} 
%their modules increase with $k$ as geometric progressions.    

%$8^0$. 
Each of the functions $\varphi (r_i,x)$ and $\varphi (l_i,x)$ 
is decreasing on $(0,1)$. For each fixed $x\in (0,1)$  one has 
$\varphi (r_i,x)>\varphi (l_i,x)$. Therefore, the intersection points 
of the graph 
of $\Phi (x)$ with that of $\varphi (r_i,x)$ and $\varphi (l_i,x)$ 
belong to the rectangle 
$[-r_i,-l_i]\times [\varphi (l_i,-l_i), \varphi (r_i,-r_i)]$. 

For $x\in [-r_0,-l_0]$ one has $1/x^2\geq 9$ and $2/(x+1)^2>\frac{32}{9}>3$, hence 
$|\Phi '(x)|>12$. Therefore 

$$|r_{i+1}-l_{i+1}|<\frac{1}{12}|\varphi (r_i,-r_i)-\varphi (l_i,-l_i)|~.$$

To simplify  the notation we write $r$  instead of $r_i$ and $l$ instead of  $l_i$. Set 

$$M:=\varphi (r,-r)-\varphi (l,-l)=
\sum _{j=1}^{\infty}(r^j-l^j)/((1-r^{j+1})(1-l^{j+1}))~.$$
For each $j$ there exists $\theta _j\in (l,r)$ such that 
$r^j-l^j=j\theta _j^{j-1}(r-l)<jr^{j-1}(r-l)$. As $l\leq r\leq 1/3$, one has 
$(1-r^{j+1})(1-l^{j+1})>(2/3)^2=4/9$. Thus $0\leq M\leq ({9}/{4})jr^{j-1}(r-l)$.
Recall that $\sum _{j=1}^{\infty}jr^{j-1}=1/(1-r)^2\leq 9/4$. Therefore, 

$$|r_{i+1}-l_{i+1}|<|\varphi (r_i,-r_i)-\varphi (l_i,-l_i)|\leq 
\frac{81}{12\cdot 16}(r_i-l_i)<\frac{r_i-l_i}{2}~.$$
This proves part (i) of the theorem.

%$9^0$. 
To settle part (ii) one has to observe that the solution 
$-l_{i+1}$ to equation (\ref{apriori1}) and the parameter $-r_i$ both tend 
to $-\lambda$, while 

$$\varphi (r,-r)=\sum _{j=1}^{\infty }\frac{r^j}{1-r^{j+1}} 
=\psi (r)~.$$ 
Therefore, $\lambda$ solves equation~\eqref{eq:main}.
\end{proof}   

\begin{remark}\label{rm:V}\rm{
The number $\lambda$ is defined as limit when  $k\rightarrow \infty$ 
of the critical points of the function $xT_k(-\xi_kx)$ belonging to $(0,1)$.
Equation (\ref{apriori}) has a solution in every interval 
$(\alpha _j,\alpha _{j+1})$. For every fixed $j-k$  there exists the limit 
as $k\rightarrow \infty$ of the solution belonging to 
$(\alpha _{j-k},\alpha _{j-k+1})$. The proof is just the same as the one 
in the above theorem. Denote all these solutions by 
$\zeta _1>\zeta _2>\cdots$. %One has $\lim _{k\rightarrow \infty}\xi _k$. 

The quantities $|\alpha _j|$ are growing as a geometric progression. 
Therefore the infinite product 
$$W:=(x+1)\prod _{j=1}^{\infty}\left(1-\frac{x}{\zeta _j}\right)$$ 
is an entire function of genus $0$. Hence for every fixed $j-k$  there exists 
$\widetilde \al_j=\lim_ {k\rightarrow \infty} \alpha_{j-k}$. Define the entire
function $V$ by the relation  
\begin{equation} \label{eq:V}
V(x):=\int _{-1}^xW(t)dt\left/\int _{-1}^0W(t)dt\right.
%V:=\int _{-1}^xW(t)dt
\end{equation}
Then the above quantities $\widetilde \al_j$  are the zeros of the function $V$. 
Notice that with the above normalization one gets $V(0)=1$.} 
\end{remark}

\begin{theorem}\label{limit2}
The quantity $A_m^2/(A_{m-1}A_{m+1})$ 
tends to $1/\lambda$ as $m\rightarrow \infty$.
\end{theorem}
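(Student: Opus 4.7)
The plan is to derive an explicit expression for $A_m^2/(A_{m-1}A_{m+1})$ which isolates two factors: a convergent ratio involving $\xi_{m-1}/\xi_m$ supplied by Theorem~\ref{limit}, and a geometric correction whose limit must be shown to equal $1$.

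First, by Lemma~\ref{lm:6} I factor $T_m(x):=S_m(x)/x=S_{m-1}(x)+A_m$ as $T_m(x)=(x-\xi_{m-1})^2 R_m(x)$, where $R_m$ has only simple negative roots strictly to the left of $\xi_{m-1}$. Reading off constant terms in $T_m$ yields $A_m=\xi_{m-1}^2 R_m(0)$, and the identity $A_{m+1}=-S_m(\xi_m)=-\xi_m T_m(\xi_m)$ gives $A_{m+1}=-\xi_m(\xi_m-\xi_{m-1})^2 R_m(\xi_m)$. Dividing and then forming the ratio at two consecutive indices produces
\[
\frac{A_m^2}{A_{m-1}A_{m+1}} \;=\; \frac{\xi_{m-1}}{\xi_m}\cdot\frac{(\xi_{m-1}/\xi_{m-2}-1)^2}{(\xi_m/\xi_{m-1}-1)^2}\cdot\frac{R_{m-1}(\xi_{m-1})/R_{m-1}(0)}{R_m(\xi_m)/R_m(0)}.
\]
Theorem~\ref{limit} gives $\xi_k/\xi_{k-1}\to\lambda\in(0,1)$, so the first factor tends to $1/\lambda$ and, since $\lambda\ne 1$, the second tends to $1$. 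The theorem thus reduces to the claim that $\rho_m:=R_m(\xi_m)/R_m(0)$ has a positive finite limit independent of the index shift.

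To establish this, I introduce the rescaled polynomials $\widetilde R_m(y):=R_m(-\xi_{m-1}y)/R_m(0)$ and aim to prove $\widetilde R_m\to\widetilde R_\infty$ uniformly on compact subsets of $\mathbb{C}$, where $\widetilde R_\infty$ is an entire function with $\widetilde R_\infty(0)=1$ and $\widetilde R_\infty(-\lambda)>0$. Writing $\widetilde R_m(y)=\prod_j(1+y/\beta_j^{(m)})$ with $\beta_j^{(m)}:=\alpha_j^{(m)}/\xi_{m-1}>1$, the scaled roots satisfy rescaled versions of the fixed-point equation \eqref{apriori}, and the same contractive iteration used in the proof of Theorem~\ref{limit} (applied one level deeper to each simple root in turn) produces limits $\widetilde\beta_j:=\lim_m\beta_j^{(m)}$ for every fixed $j$. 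Lemmas~\ref{leq1/3} and \ref{summarise1} force the $\widetilde\beta_j$ to grow at least geometrically (with ratio bounded below by $1/\lambda>3$), so $\widetilde R_\infty(y):=\prod_j(1+y/\widetilde\beta_j)$ defines an entire function of genus zero; this is essentially the function whose primitive is the $V$ of Remark~\ref{rm:V}. A dominated-convergence estimate on the logarithms, using these geometric bounds to control the tail of the product uniformly in $m$, then gives the desired uniform convergence.

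Granting this convergence, $R_m(\xi_m)/R_m(0)=\widetilde R_m(-\xi_m/\xi_{m-1})\to \widetilde R_\infty(-\lambda)$ and the analogous limit with index $m-1$ evaluates to the same positive number $\widetilde R_\infty(-\lambda)$ (positive because each factor $1-\lambda/\widetilde\beta_j\ge 1-\lambda/3^j$ is positive and the product converges absolutely). Hence the third factor of the displayed identity tends to $1$, and combining the three limits yields $A_m^2/(A_{m-1}A_{m+1})\to 1/\lambda$. The main obstacle is the uniform convergence of the $\widetilde R_m$: it requires both the contractive-iteration analysis of each scaled simple root and careful tail control, and it is this step that carries the real weight of the argument.
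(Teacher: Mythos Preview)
Your argument is correct and follows essentially the same route as the paper: both proofs factor out the double root of $T_m$ at $\xi_{m-1}$, obtain the identical three-factor decomposition (the paper writes $\xi_m/\xi_{m+1}\cdot Y_m/Y_{m+1}\cdot F_m/F_{m+1}$, which matches your display after an index shift), dispose of the first two factors via Theorem~\ref{limit}, and then argue that the third factor tends to $1$ by appealing to uniform convergence of the rescaled cofactors to a fixed entire function. The only cosmetic difference is that you use the exact polynomial cofactor $R_m$ while the paper uses the Taylor-remainder representation $S_{m-1}''(\xi_{m-1}+\theta\cdot)$; your $\widetilde R_\infty$ is, up to normalization, the function $W$ (equivalently $V''$) of Remark~\ref{rm:V}, and the convergence step you flag as the main obstacle is exactly what the paper offloads to that remark.
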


\begin{proof} 
By Taylor's formula of order $2$ applied 
at $\xi _{m-1}$ the polynomial $T_{m-1}$ has the form 
$T(x)=(x-\xi _{m-1})^2S_{m-1}''(\xi _{m-1}+
\theta (x-\xi _{m-1})(x-\xi _{m-1}))$, 
where $\theta (x-\xi _{m-1})\in (0,1)$. Notice that for $x>\xi _{m-1}$, 
$\theta (x-\xi _{m-1})$ is uniquely defined. Indeed, $S_{m-1}''$ is 
increasing for $x>\xi _{m-1}$ as the second derivative of 
a hyperbolic polynomial with all roots smaller than $\xi _{m-1}$.  

Thus $A_{m-1}=T_{m-1}(0)=\xi _{m-1}^2S_{m-1}''(\xi _{m-1}+
\theta (-\xi _{m-1})(-\xi _{m-1}))$. 
Notice that  $A_m=\max _{[\xi _{m-1},0]}|x(x-\xi _{m-1})^2S_{m-1}''(\xi _{m-1}+
\theta (x-\xi _{m-1})(x-\xi _{m-1}))|$. The maximum is attained at $\xi _m$ and $\xi _m/\xi _{m-1}\rightarrow \lambda$ as 
$m\rightarrow \infty$. Thus 

$$\frac{A_m}{A_{m-1}}=\xi _m\frac{(\xi _m-\xi _{m-1})^2}{\xi _{m-1}^2}
\frac{S_{m-1}''(\xi _{m-1}+
\theta (\xi _m-\xi _{m-1})(\xi _m-\xi _{m-1}))}{S_{m-1}''(\xi _{m-1}+
\theta (-\xi _{m-1})(-\xi _{m-1}))}~.$$
The first quotient to the right (denoted by $Y_m$)  
tends to $(\lambda -1)^2$. Denote by $F_m$ the 
second quotient. In the same way one shows that 
$A_{m+1}/A_m=\xi _{m+1}Y_{m+1}F_{m+1}$. Hence 

$$\frac{A_m^2}{A_{m-1}A_{m+1}}=\frac{\xi _m}{\xi _{m+1}}\frac{Y_m}{Y_{m+1}}\frac{F_m}{F_{m+1}},$$ 
with $\lim _{m\rightarrow \infty}(\xi _m/\xi _{m+1})=1/\lambda$ and 
$\lim _{m\rightarrow \infty}(Y_m/Y_{m+1})=(\lambda -1)^2/(\lambda -1)^2=1$. 
So to prove the theorem we need  to show that $\lim _{m\rightarrow \infty}(F_m/F_{m+1})=1$.

The latter statement follows from the above remark. Indeed, set 
$x\mapsto |\xi _{m-1}|x$ (resp. $x\mapsto |\xi _{m}|x$). Then the 
function $T_{m-1}$ (resp. $T_m$), in the limit as $m\rightarrow \infty$, 
becomes the function $V$ defined by \eqref{eq:V}. 
The numerators of the quotients $F_m$ and $F_{m+1}$ tend to 
$V''(-1+\theta (-\lambda +1)(-\lambda +1))$ while the denominators 
tend to $V''(-1+\theta (1))$. Hence $F_m/F_{m+1}\rightarrow 1$ as $m\rightarrow
\infty$.
%The numerators and the  denominators of the fractions $F_m$ and $F_{m+1}$ tend as 
%$m\rightarrow \infty$ to the same respective functions, evaluated at the same 
%respective values of the arguments. THIS IS VERY IMPORTANT. PROBABLY NEEDS BETTER EXPLANATION!
\end{proof} 

\begin{proposition}\label{pr:V}
The function $V$ defined by \eqref{eq:V} enjoys the following properties:
\begin{itemize}
\item[\rm (i)] $V(0)=1,\; V(-1)=V'(-1)=0$;\\
\item[\rm(ii)] $V$ belongs to the Laguerre-P\'olya class $\LP^+$;\\
\item[\rm (iii)] $V$ satisfies the functional relation:  $V(x)=1+{xV(\lambda x)}/{V(-\lambda )}.$
\end{itemize}
\end{proposition}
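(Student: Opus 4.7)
The plan is to prove (i) directly from the integral definition, to derive (iii) by passing to the limit in the natural polynomial recursion for the approximating sequence, and to obtain (ii) from the fact that $V$ is a uniform limit on compacts of polynomials in $\LP^+$.

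For (i), direct inspection of the definition $V(x) = \int_{-1}^x W(t)\, dt \big/ C$ with $C := \int_{-1}^0 W(t)\, dt$ gives $V(0) = 1$ and $V(-1) = 0$. Since $V'(x) = W(x)/C$ and the factor $(x+1)$ of $W$ vanishes at $x = -1$, we also have $V'(-1) = 0$.

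For (iii), I would introduce the rescaled polynomials $\widetilde T_j(y) := T_j(-\xi_{j-1}y)/T_j(0)$. By Lemma~\ref{lm:6} these lie in $\LP^+$, satisfy $\widetilde T_j(0) = 1$, and have a double zero at $y = -1$. Combining $S_j = xT_j$ with $T_{j+1}(x) = S_j(x) - S_j(\xi_j)$ yields the polynomial recursion $T_{j+1}(x) = xT_j(x) - \xi_j T_j(\xi_j)$. Substituting $x = -\xi_j y$ and dividing by $T_{j+1}(0) = -\xi_j T_j(\xi_j)$, with $\lambda_j := \xi_j/\xi_{j-1}$, one obtains
\[
\widetilde T_{j+1}(y) \;=\; 1 + \frac{y\,\widetilde T_j(\lambda_j y)}{\widetilde T_j(-\lambda_j)}.
\]
Letting $j \to \infty$, using $\lambda_j \to \lambda$ from Theorem~\ref{limit} and the convergence $\widetilde T_j \to V$ uniformly on compact subsets of $\bC$, yields $V(y) = 1 + yV(\lambda y)/V(-\lambda)$. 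One verifies $V(-\lambda)\neq 0$ directly from the integral formula, using that $W$ has constant sign on $(-1,-\lambda)$ because $\zeta_1 = -\lambda$ is the unique zero of $W$ in $(-1,0)$.

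Part (ii) then follows from the same convergence: each $\widetilde T_j$ lies in $\LP^+$, and $\LP^+$ is closed under uniform convergence on compacta by the classical Laguerre--P\'olya theorem. The main obstacle is establishing this uniform convergence $\widetilde T_j \to V$. The plan for that step is to combine the geometric bounds $|\xi_j|/|\xi_{j-1}| \in [l_0, 1/3]$ from Lemmas~\ref{leq1/3} and \ref{summarise1} with the decay estimates of Lemma~\ref{estimate} to show that for each fixed index $i$ the rescaled root $-\alpha_i^{(j)}/\xi_{j-1}$ of $\widetilde T_j$ converges to the $\widetilde\alpha_i$ of Remark~\ref{rm:V}, while the tail of the Hadamard product $\widetilde T_j(y) = (y+1)^2\prod_i (1 - y/\widetilde\alpha_i^{(j)})$ is uniformly dominated by a geometric series. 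The resulting entire limit function shares with $V$ the same set of simple negative zeros, a double zero at $-1$, and the normalization at $0$, whence the two coincide.
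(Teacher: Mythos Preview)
Your argument is essentially the paper's own: part (i) from the integral definition, part (iii) by rewriting the recursion $T_{j+1}(x)=xT_j(x)-\xi_jT_j(\xi_j)$ in rescaled form and letting $j\to\infty$ with $\lambda_j\to\lambda$, and part (ii) from the uniform-on-compacta convergence of the rescaled $T_j$'s, controlled by the geometric growth of the root moduli. One small slip: $-\lambda$ is \emph{not} a zero of $W$ (it is a critical point of $xV(x)$, not of $V$); by construction all the $\zeta_j$ lie below $-1$, so $W$ has no zeros whatsoever in $(-1,0)$, which makes your conclusion $V(-\lambda)\neq 0$ hold for an even simpler reason than you state.
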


Notice that the latter relation implies that for any choice of $\lambda$ one has $V(0)=1$ and $V(-1)=0$. On the other hand, $V'(-1)=0$ is an additional condition which together with ${\rm(ii)}$ determines $\lambda$. 

\begin{proof}[Proof of Proposition~\ref{pr:V}]
Part (i) follows from the definition of the function $V$, see Remark~\ref{rm:V}. 
To prove part (ii) notice that  
the functions $V$ and $W$ are limits of sequences of hyperbolic
polynomials $H_k$ with negative roots. 
If the roots are numbered in the order of 
increasing  absolute values, 
then for every fixed $j$ the root $\alpha _j$ 
has a finite limit when $k\to \infty$. 

The modules of the roots increase faster than a geometric progression 
with ratio $2.6$. 
Hence the sequence of polynomials is uniformly convergent on any compact set 
$\Omega$. 
Indeed, consider the product $\prod _{i=N}^{\infty}(1-x/\alpha _i)$. The 
module of its logarithm is majorized by $C\sum _{i=N}^{\infty}|x/\alpha _i|$ 
(where $C>0$ depends only on the set $\Omega$) which 
is arbitrarily and uniformly on $\Omega$ small  
if one chooses $N$ sufficiently large. 
To see this  notice that $|\ln (1+y)|<|y|+|y|^2+|y|^3+\cdots$, 
where $y=-x/\alpha _i$, for $i$ large enough the sum 
(which equals $|y|/(1-|y|)$) is smaller than $2|y|$.  
The limit of such a sequence belongs to the class $\LP^+$ 
by definition.  

To prove part (iii) one has to recall the equality 
$S_{m+1}(x)=x(S_m(x)-S_m(\xi _m))$ or, equivalently, 
$T_{m+1}(x)=xT_m(x)-\xi _mT_m(\xi _m)$. The function $V$ is the limit when 
$m\rightarrow \infty$ of the polynomials $T_m(-\xi_{m-1}x)$  multiplied by a nonzero constant so that $T_m(0)=1$.  
Rescaling sends the double root of $T_m(x)$  to  $-1$. 
\end{proof}

In order to finish the proof of Theorem~\ref{th:main} we study the problem which entire functions satisfy the properties given in  Proposition~\ref{pr:V}. The following definition is crucial for our further considerations. Recall that $\Psi(q,u)=\sum_{j=0}^\infty q^{\binom{j+1}{2}}u^j.$  
We say that a pair $(\hat q,\hat u)$ is {\em critical} for $\Psi(q,u)$ if $|\hat q|<1$ and $\Psi(\hat q,u)$ as a function of $u$ has a double root at $\hat u$.  

\begin{theorem}\label{th:relation} There exists  an analytic in a disk $|x|\le r,\; r>1$ function $V(x)$ satisfying the relation 
\begin{equation}\label{eq:rel}
V(x)=-\hat u\hat q(xV(\hat qx)+V(-\hat q))
\end{equation} 
for some $\hat u\in \bC^*$ and $|\hat q|<1$ as well as 
the  boundary condition $V'(-1)=0$ 
if and only if the pair $(\hat q, \hat u)$ is critical. 
\end{theorem}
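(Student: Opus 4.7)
The plan is to substitute a power series expansion for $V$ into the functional relation, reduce all hypotheses on $V$ to algebraic conditions on the coefficients, and then match these conditions with the definition of a critical pair via the one-step identity
\[
\Psi(q,u)=1+qu\,\Psi(q,qu),
\]
which I would first prove by reindexing: the exponent $\binom{j+1}{2}+j+1=\binom{j+2}{2}$ shifts the series by one slot.

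For the \emph{only if} direction I would write $V(x)=\sum_{k\ge 0}c_k x^k$ and plug into $V(x)=-\hat u\hat q(xV(\hat q x)+V(-\hat q))$. Matching coefficients immediately yields $c_0=-\hat u\hat q\,V(-\hat q)$ together with the one-term recurrence $c_k=-\hat u\hat q^k c_{k-1}$ for $k\ge 1$. Solving gives $c_k=c_0(-\hat u)^k\hat q^{\binom{k+1}{2}}$, so $V(x)=c_0\,\Psi(\hat q,-\hat u x)$. A nontrivial $V$ has $c_0\ne 0$, and then the compatibility equation for $c_0$ reads $1=-\hat u\hat q\,\Psi(\hat q,\hat u\hat q)$; combined with the one-step identity at $u=\hat u$ this is equivalent to $\Psi(\hat q,\hat u)=0$. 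Finally, since $V(x)=c_0\Psi(\hat q,-\hat u x)$, the condition $V'(-1)=0$ becomes $-c_0\hat u\,\partial_u\Psi(\hat q,\hat u)=0$, giving $\partial_u\Psi(\hat q,\hat u)=0$. Thus $(\hat q,\hat u)$ is critical.

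For the converse, given a critical pair $(\hat q,\hat u)$ with $|\hat q|<1$, I would define $V(x):=\Psi(\hat q,-\hat u x)$. Since $|\hat q|<1$ the series converges for every $x\in\bC$, so $V$ is entire and, in particular, analytic on any disk $|x|\le r$. The double root of $\Psi(\hat q,\cdot)$ at $\hat u$ immediately yields $V(-1)=V'(-1)=0$. Applying the one-step identity at $u=-\hat u x$ rewrites $V(x)=1-\hat u\hat q x\,V(\hat q x)$; setting this equal to $-\hat u\hat q(xV(\hat q x)+V(-\hat q))$ collapses to the single equality $1=-\hat u\hat q\,V(-\hat q)=-\hat u\hat q\,\Psi(\hat q,\hat u\hat q)$, which is exactly the one-step identity at $u=\hat u$ together with $\Psi(\hat q,\hat u)=0$.

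The main obstacle, and in fact the only nontrivial ingredient, is bookkeeping: one must track three different evaluations $u=\hat u$, $u=\hat u\hat q$, and $u=-\hat u x$ of $\Psi(\hat q,\cdot)$ and recognise that the forward and backward implications both hinge on the same one-step identity. There is no genuine analytic difficulty, because $|\hat q|<1$ forces every partial theta series appearing in the argument to be entire in its second variable, so all power-series manipulations and evaluations at $x=-1$, $x=-\hat q$ are legitimate.
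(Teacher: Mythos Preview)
Your proposal is correct and follows essentially the same route as the paper: substitute a power series for $V$, read off the recursion $c_k=-\hat u\hat q^{\,k}c_{k-1}$ to identify $V(x)=c_0\,\Psi(\hat q,-\hat u x)$, convert the constant-term compatibility $1=-\hat u\hat q\,\Psi(\hat q,\hat u\hat q)$ into $\Psi(\hat q,\hat u)=0$, and then use $V'(-1)=0$ to obtain $\partial_u\Psi(\hat q,\hat u)=0$. The only cosmetic difference is that you isolate and name the one-step identity $\Psi(q,u)=1+qu\,\Psi(q,qu)$ up front and invoke it symmetrically in both directions, whereas the paper normalizes $\beta_0=1$ and derives the equivalence $\Psi(q,qu)=-1/(qu)\Leftrightarrow\Psi(q,u)=0$ by an ad hoc expansion; the content is identical.
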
 

Notice that \eqref{eq:rel} is exactly the relation {\rm (iii)} of Proposition~\ref{pr:V} with an undefined scalar factor $\hat u$.

\begin{proof}[Proof of Theorem~\ref{th:relation}] 
Assume that a function $V(x)$  analytic in some disk $|x|\le r,\; r>1$ satisfies the relation $V(x)=-uq(xV(qx)+V(-q))$ for some fixed $u\neq 0$ and $q\neq 0$. W.l.o.g. we can assume $V(0)=1$ which is equivalent to $\be_0=1$. Substituting the power series $V(x)=\sum_{j=0}^\infty \be_jx^j$ in the latter relation one gets 
$$\sum_{j=0}^\infty \be_jx^j=-quV(-q)-qux\sum_{j=0}^\infty \be_jq^jx^j.$$
%which immediately implies $a_0=0$ since its r.h.s. is divisible by $x$. Further one can always assume $a_1=1$ since the relation is invariant under rescaling and setting $a_1=0$ we  only get the trivial solution. 
Comparing the coefficients at equal 
powers in the latter relation  we get the system of equalities
\begin{equation}\label{eq:1}
-quV(-q)=1, \quad \text{and} 
\end{equation}

$$ \be_1=-qu,\;\be_2=-q^2u\be_1,\; \be_3=-q^3u\be_2,\;... \;, \be_k=-q^{k}u\be_{k-1}, ...$$
implying that $\be_j=q^{\binom {j+1}{2}}(-u)^{j},\; j=1, 2,3,...$. Substituting these coefficients in $V(x)$ we get 
 $V(x)=\sum_{j=0}^\infty q^{\binom{j+1}{2}}(-ux)^j.$  In terms  of the partial theta  function $\Psi(q,u)$ given by \eqref{eq:Psi} one gets  $V(x)=\Psi(q,-ux)$ and condition \eqref{eq:1} takes the form  $\Psi(q,qu)=-\frac{1}{qu}$.  Let us show that  it is  equivalent to $\Psi(q,u)=0$. Indeed, expanding $\Psi(q,qu)=-\frac{1}{qu}$ we get 
 $qu\sum_{j=0}^\infty q^{\binom {j+1}{2}}(qu)^j=-1 \Leftrightarrow 1+ \sum_{j=0}q^{\binom {j+1}{2}}(qu)^{j+1}=0 \Leftrightarrow 1+ \sum_{j=0}q^{\binom {j+2}{2}}u^{j+1}=0 \Leftrightarrow \Psi(q,u)=0.$  Notice that  for the power series expressing $V(x)$ to have a  convergence radius exceeding $1$ it is necessary and sufficient to have $|q|<1$ in which case $V(x)$ is entire. Now we use the last boundary condition $V'(-1)=0$. With $V(x)=\Psi(q,-ux)$ we get $V'(x)=-u\partial\Psi(q,-ux)$, where $\partial$ stands for the partial derivative w.r.t. second argument. Finally, $V'(-1)=-u\Psi'_u(q,u)$.  Thus one gets the  system 
 $$\begin{cases}\Psi(q,u)=0\\ -u\Psi'_u(q,u)=0\end{cases}  \Leftrightarrow  \begin{cases}\Psi(q,u)=0\\ \Psi'_u(q,u)=0\end{cases},$$
 since $u=0$ is never a solution of $\Psi(q,u)=0$. 
 
An arbitrary solution $(\hat q,\hat u)$ of  the latter system is exactly a critical pair in the above definition, i.e. $\hat q$ is such that the function $\Psi(\hat q, u)$ as a function of $u$ has a double root at $\hat u$. 
\end{proof}

\medskip

Denote by $g_q(x) =\sum_{k=0}^\infty q^{k^2x^k},\; 0 < q < 1$ 
the partial theta-function (in this form), and by
 $S_n(q,x) =\sum_{k=0}^nq^{k^2}x^k$ 
its $n$th Taylor section. We will need the following lemma.

\begin{lemma}\label{lm:finite} For every real $q\in (0, 1)$ there exists a number $m\in \bN$ such that for
all $n\ge 2m+2$ the number of non-real zeros of $S_n(q,x)$ is not greater than $2m+2$.
\end{lemma}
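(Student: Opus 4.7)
The plan is to compare $S_n(q,x)$ with $g_q(x)$ on bounded sets and, in the large-modulus range, with the Jacobi theta function obtained by a rescaling $x=-q^{-(2k+1)}t$. The basic estimate is that the remainder $g_q(x)-S_n(q,x)=\sum_{k\ge n+1}q^{k^2}x^k$ decays super-exponentially on any fixed disk $|x|\le T$, so $S_n\to g_q$ uniformly on compacta, and by Hurwitz's theorem the zeros of $S_n$ in $\{|x|\le T\}$ converge in multiplicity to the zeros of $g_q$ there.

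First I would show that $g_q$ itself has only finitely many non-real zeros. Substituting $x=-q^{-(2k+1)}t$ into $g_q$, reindexing $j=k+\ell$, and dividing out the dominant factor $q^{-k(k+1)}(-t)^k$ rewrites $g_q$ as the one-sided bilateral series $\sum_{\ell\ge -k}(-1)^\ell q^{\ell(\ell-1)}t^\ell$. As $k\to\infty$ this converges, uniformly on compacta in $t$, to $F(t):=\sum_{\ell\in\mathbb{Z}}(-1)^\ell q^{\ell(\ell-1)}t^\ell$, and the Jacobi triple product identifies $F$ with $\prod_{n\ge 1}(1-q^{2n})(1-tq^{2n-2})(1-q^{2n}/t)$, whose zeros are exactly the positive reals $t=q^{2m}$, $m\in\mathbb{Z}$. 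By Hurwitz in the $t$-variable, for all sufficiently large $k$ the function $g_q$ has a unique zero in each annulus $q^{-2k}<|x|<q^{-2k-2}$ and it is real negative. Hence all non-real zeros of $g_q$ lie in some disk $|x|\le T$; let $2m_0$ be their number.

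Next I would use a self-reciprocity of the sections themselves: a direct computation gives $x^n S_n(q,1/x)=\sum_{j=0}^n q^{(n-j)^2}x^j=q^{n^2}S_n(q,q^{-2n}x)$, so the involution $y\mapsto q^{-2n}/y$ permutes the zero set of $S_n$. Combined with complex conjugation, the non-real zeros come in quadruples $\{y,\bar y,q^{-2n}/y,q^{-2n}/\bar y\}$ (or in pairs on the self-conjugate circle $|y|=q^{-n}$). In particular, every non-real zero of $S_n$ with $|x|\ge q^{-2n}/T$ is the reflection of one with $|x|\le T$, so those two regions contribute at most $2m_0+2m_0$ non-real zeros in total, using Hurwitz and the previous paragraph.

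It remains to rule out non-real zeros of $S_n$ in the middle annulus $T<|x|<q^{-2n}/T$. Applying the same rescaling $x=-q^{-(2k+1)}t$ now to $S_n$ produces the finite bilateral sum $\sum_{\ell=-k}^{n-k}(-1)^\ell q^{\ell(\ell-1)}t^\ell$. For $\min(k,n-k)\to\infty$ this converges, uniformly on each fixed compact subset of $\mathbb{C}\setminus\{0\}$, to the same bi-infinite theta $F(t)$, so Hurwitz (in $t$) forces every zero of $S_n$ lying in the annulus $q^{-2k}<|x|<q^{-2k-2}$ to be real and negative for all $K\le k\le n-K$, provided $K$ is taken fixed and large and $n$ is large. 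Combining the three regions yields at most $4m_0$ non-real zeros of $S_n$, and choosing $m$ with $2m+2\ge 4m_0$ finishes the proof. The main obstacle is the uniformity in $k$ of the Hurwitz approximation in the middle range: one needs tail estimates for the bilateral sum that are uniform on compact subsets of the $t$-plane, in order to prevent a non-real zero from drifting between adjacent annuli as $n$ grows.
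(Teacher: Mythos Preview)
Your approach works and is genuinely different from the paper's. The paper gives a purely real, elementary sign-change argument: from the identity $\prod_{k\ge 1}\frac{1-q^{2k}}{1+q^k}=1+2\sum_{k\ge 1}(-1)^kq^{k^2}>0$ it picks an odd $m$ with $1+2\sum_{k=1}^m(-1)^kq^{k^2}>0$, then verifies directly that $(-1)^kS_n(q,-q^{-2k})>0$ for every $k$ with $m+1\le k\le n-m-1$, by splitting $\sum_{j=0}^n(-1)^{j-k}q^{(j-k)^2}$ into two alternating tails (whose signs are read off from their extreme terms) and a middle block equal to the positive partial sum above. This produces $n-2m-2$ sign changes on the negative axis, hence that many real zeros, and the bound on non-real zeros follows.

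Your route---Jacobi triple product, Hurwitz, and the self-reciprocity $x^nS_n(q,1/x)=q^{n^2}S_n(q,q^{-2n}x)$---is more structural and explains \emph{why} the zeros cluster near $-q^{-2k}$: after rescaling they converge to the real zeros of the full bilateral theta $F(t)$. (Amusingly, the paper's test points $x=-q^{-2k}$ are exactly your points $t=q$, so the two arguments probe the same locations with different tools.) Two small corrections. First, the bound ``$2m_0+2m_0$'' for the two end regions should be $2M$, with $M$ the \emph{total} number of zeros of $g_q$ in $\{|x|\le T\}$, since Hurwitz alone does not prevent a real (possibly multiple) zero of $g_q$ from being approached by a conjugate pair of zeros of $S_n$; this is harmless for the conclusion. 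Second, the ``main obstacle'' you flag is not really one: on the fixed annulus $q\le |t|\le q^{-1}$ the tails satisfy $\sum_{\ell>n-k}q^{\ell(\ell-1)}|t|^\ell\le\sum_{\ell>n-k}q^{\ell^2-2\ell}$ and $\sum_{\ell<-k}q^{\ell(\ell-1)}|t|^\ell\le\sum_{\ell<-k}q^{\ell^2}$, so $\sup_{q\le|t|\le q^{-1}}|H_{k,n}-F|$ depends only on $\min(k,n-k)$, and a single threshold $K$ (independent of $n$) suffices. The paper's argument is shorter and avoids complex analysis; yours situates the lemma in the theta-function framework and yields the finiteness of non-real zeros of $g_q$ along the way, which the paper obtains only afterward as a corollary of the lemma.
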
 

\begin{proof}  We will use the following well-known identity for $|q|<1$ 
$$\prod_{k=1}^\infty\frac{1-q^{2k}}{1 + q^k} = 1 + 2 \sum_{k=1}^\infty(-1)^kq^{k^2}$$
(see e.g. \cite{PSz}, Chapter 1, Problem 56). By this identity we have
$$1 + 2\sum_{k=1}^\infty(-1)^kq^{k^2}> 0,\;  q \in (0, 1).$$ 
Thus for every $q \in (0, 1)$ there exists $m = 2s+1 \in  \bN$ such that the inequality holds
\begin{equation}\label{eq:hjlp}
\quad 1 + 2\sum_{k=1}^m(-1)^kq^{k^2}> 0.
\end{equation}
Then for every $n\ge 2m + 2$ we have for all $k,\; m + 1 \le k \le n - m - 1$:
$$(-1)^kS_n(q,-\frac{1}{q^{2k}}) =\sum_{j=0}^n(-1)^{j-k}q^{j^2-2kj} = q^{-k^2}\sum_{j=0}^n
(-1)^{j-k}q^{(j-k)2}=$$
$$=q^{-k^2}\left(\sum_{j=0}^{k-m-1}(-1)^{j-k}q^{(j-k)^2}+
\sum_{j=k-m}^{k+m}(-1)^{j-k}q^{(j-k)^2}+\sum_{j=k+m+1}^n(-1)^{j-k}q^{(j-k)^2}\right)=$$
$$=: \Sigma_1 + \Sigma_2 + \Sigma_3.$$
We see that the summands in $\Sigma_1$ are alternating in signs and their moduli
are increasing. So the sign of $\Sigma_1$ coincides with the sign of the $(k - m - 1)$-
th summand. Therefore,  $\rm{sign}\,\Sigma_1 = (-1)^{k-m-1-k} = (-1)^{-m-1} = (-1)^{-2s-2} = 1$. Analogously the summands in 
$\Sigma_3$ are alternating in signs and their moduli are decreasing. So the sign of $\Sigma_3$ coincides with the sign of the $(k + m + 1)$-th summand. In other words, 
$\rm{sign}\, \Sigma_3 = (-1)^{k+m+1-k} = (-1)^{m+1} = (-1)^{2s+2} = 1$. Therefore $\Sigma_1 \ge 0$ and $\Sigma_3 \ge 0.$ 
By \eqref{eq:hjlp} we have
$$\Sigma_2 = q^{-k^2}\sum_{j=k-m}^{k+m}(-1)^{j-k}q^{(j-k)^2}= q^{-k^2}\left(1 + 2\sum_{k=1}^m(-1)^kq^{k^2}\right)> 0.$$
Therefore for every $k, m+1 \le  k \le  n-m-1$, we have $(-1)^kS_n(q,-\frac{ 1}{q^{2k}}) > 0$. Thus
for $n \ge 2m+2$ the polynomial $S_n(q,x)$ has not less than $n-2m-2$ real zeros (and
the number of non-real zeros of $S_n(q,x)$ is not greater than $2m + 2$).
\end{proof}

\begin{corollary}  For every real $q\in (0,1)$  the functions $g_q(x)$ and $\Psi(q,x)$  have a finite number of
non-real zeros. Moreover, the number of non-real zeros is a non-decreasing function of $q$.
\end{corollary}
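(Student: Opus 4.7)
The plan has two parts: finiteness via Hurwitz's theorem and Lemma~\ref{lm:finite}, and monotonicity via a local bifurcation analysis.

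\textbf{Finiteness.} Fix $q\in(0,1)$. The superexponential decay of $q^{k^2}$ makes $g_q$ entire of order~$0$, so the Taylor sections $S_n(q,\cdot)$ converge to $g_q$ uniformly on every compact subset of $\bC$. By Hurwitz's theorem, on any disk $|x|\le R$ whose boundary avoids the zeros of $g_q$, the zeros of $S_n(q,\cdot)$ converge with multiplicity to those of $g_q$ as $n\to\infty$. Since a limit of real zeros is real, every non-real zero of $g_q$ inside $|x|\le R$ is approached by non-real zeros of $S_n(q,\cdot)$, whose number is bounded by the constant $2m+2$ of Lemma~\ref{lm:finite} (depending only on $q$). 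Letting $R\to\infty$ yields at most $2m+2$ non-real zeros of $g_q$ in total. The identity $\binom{j+1}{2}=j(j+1)/2$ gives
\[
\Psi(Q^2,u)=\sum_{j\ge 0}Q^{j^2+j}u^j=\sum_{j\ge 0}Q^{j^2}(Qu)^j=g_Q(Qu),
\]
so the zeros of $\Psi(q,\cdot)$ are obtained from those of $g_{\sqrt q}$ by the real scaling $u\mapsto\sqrt q\,u$, and the bound transfers.

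\textbf{Monotonicity.} The zeros of the real-analytic family $\{g_q\}_{q\in(0,1)}$ depend continuously on $q$, so the count $N(q)$ of non-real zeros is locally constant away from parameters $q_0$ at which $g_{q_0}$ has a multiple real zero $\rho$. The key observation is the identity
\[
q\,\partial_q g_q(x)=(x\partial_x)^2 g_q(x)=x^2g_q''(x)+xg_q'(x),
\]
verified term-by-term on each monomial $q^{k^2}x^k$. Combined with $g_{q_0}(\rho)=g_{q_0}'(\rho)=0$ this gives $\partial_q g_q(\rho)\big|_{q=q_0}=\rho^2 g_{q_0}''(\rho)/q_0$. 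Expanding $g_q(x)$ to second order at $(q_0,\rho)$ and solving for the nearby pair of zeros then yields the bifurcation relation
\[
(x-\rho)^2\approx-\frac{2\,\partial_q g_{q_0}(\rho)}{g_{q_0}''(\rho)}(q-q_0)=-\frac{2\rho^2}{q_0}(q-q_0),
\]
in which the sign ambiguity coming from $g_{q_0}''(\rho)$ cancels out. Hence the two zeros are real for $q<q_0$ and form a complex conjugate pair for $q>q_0$, so $N(q)$ jumps upward by $2$ at every simple double collision. Via the scaling from the finiteness paragraph the same conclusion transfers to $\Psi(q,\cdot)$.

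\textbf{Main obstacle.} The bifurcation analysis above only treats generic collisions, namely simple double real zeros. Degenerate collisions (real zeros of multiplicity $\ge 3$ or two double zeros coalescing at the same $q_0$) lie on the locus $\{g_q=g_q'=g_q''=0\}$, an overdetermined system of three equations in the two unknowns $(q,x)$, hence correspond to isolated parameters $q_0\in(0,1)$. Showing that $N(q)$ remains non-decreasing through each such exceptional $q_0$ requires a Newton--Puiseux expansion of $g_q(x)$ near $(q_0,\rho)$ and a careful count of which local branches of the zero set are real on each side of $q_0$; I expect this bookkeeping to be the principal technical difficulty in turning the sketch into a complete proof.
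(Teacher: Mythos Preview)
Your finiteness argument via Hurwitz's theorem and Lemma~\ref{lm:finite} is correct and is exactly what the paper does.

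For monotonicity the paper takes a completely different and much shorter route: it invokes the fact that for every $q\in(0,1)$ the sequence $(q^{n^2})_{n\ge 0}$ is a \emph{complex zero decreasing sequence} (CZDS); see \cite{CC}. For $0<q_1<q_2<1$ one passes from $S_n(q_2,\cdot)$ to $S_n(q_1,\cdot)$, and from $g_{q_2}$ to $g_{q_1}$ (an entire function of genus~$0$ with only finitely many non-real zeros), by termwise multiplication by the CZDS $\bigl((q_1/q_2)^{n^2}\bigr)_{n\ge 0}$, which by definition cannot increase the number of non-real zeros. That single global stroke handles all $q$ at once and requires no genericity hypotheses, no bifurcation expansion, and no case analysis.

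Your bifurcation approach, by contrast, is incomplete on two fronts. First, as you yourself flag, collisions of order $\ge 3$ are untreated; your heuristic that such $q_0$ form an isolated set is plausible but not proved (a real-analytic variety in two real variables may well project onto an interval), and even granting discreteness you still owe a Newton--Puiseux count showing $N$ cannot drop across each such $q_0$. Second---and not flagged---the assertion that $N(q)$ is locally constant away from real collisions presupposes that a complex-conjugate pair of zeros cannot wander off to infinity (or appear from infinity) as $q$ varies. For an entire function with infinitely many zeros this is not automatic; you would need, say, a bound on the moduli of the non-real zeros that is locally uniform in $q$, which must again be extracted from Lemma~\ref{lm:finite} or from the Hadamard product. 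The CZDS argument bypasses both difficulties entirely. Your identity $q\,\partial_q g_q=(x\partial_x)^2 g_q$ is a nice observation that explains \emph{why} generic collisions go the right way, but it does not by itself yield a complete proof.
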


\begin{proof} Since $\Psi(q,x)$ is obtained from $g_q(x)$ by rescaling of $x$ it suffices to consider $g_q(x)$ only. To prove the first statement fix an arbitrary  $q\in(0,1)$. By Lemma~\ref{lm:finite} we know that there exists $m$ such that 
the number of non-real roots of any Taylor section $S_n(q,x)$ for sufficiently large $n$ does not exceed $m$. Assume that the function $g_q(x)$ has  $l > m$ non-real roots. Take small circles surrounding these roots and not intersecting the real axis. By the Hurwitz theorem all Taylor sections with large $n$ should have exactly  $l$ roots in the union of disks  bounded by these $l$ circles. Contradiction.  

To prove the second statementÊ consider the sequence of Taylor sections. We  prove  that the number of real roots of any section $S_n(q,x)$  and $g_q(x)$ itself is a monotone non-increasing function of $q\in (0,1)$. Indeed, the sequence $q^{n^2}$ is a complex zero decreasing sequence (CZDS) for any $q\in (0,1)$, see e.g. \cite{CC}. Thus for any $0<q_1<q_2<1$ one has that to obtain the section with the value of parameter $q_1$ from that of $q_2$ one has to multiply the coefficients of the former by $(q_1/q_2)^{n^2}$. Since the latter sequence is CZDS the result for sections follows. For $g_q(x)$  the same argument applies since it has only finitely many non-real zeros and is of genus $0$. 
\end{proof}

Finally to finish the proof of Theorem~\ref{th:main} notice that  by Proposition~\ref{pr:V} and Theorem~\ref{th:relation} %and Proposition~\ref{pr:infinit}  
the function $V(x)$ satisfies the functional relation {\rm(iii)}, $V'(-1)=0$ and belongs to $\LP^+$ which implies that  $\la=\q$ and the function $V$ equals $\Psi(\q,-\widetilde ux)$ where $\widetilde u$ is the double root of the function $\Psi(\q,x)$. \qed

\begin{remark}
{\rm A somewhat mysterious equation~\eqref{eq:main} describes the set of all critical points of the limiting function $\Psi(\q, -\tilde u x)$ of which $\tilde u$ is the only critical point belonging to the interval $(0,1)$.Ê }
\end{remark}

\medskip
Notice that  $\Psi(q,x)$ belongs to $\LP^+$ if and only if 
$q\in (0,\q]$ where $\q$ is the constant appearing in Theorem~\ref{th:limit}, comp. Theorem 4 of \cite{KLV1}. 
The following additional statements are very plausible but since we do not need them we have not put enough effort in proving them. 

\begin{conjecture}\label{conj:extras} 

{\rm  (i)} Enumerating the negative roots of  $\Psi(q,u)$ as $-r_1(q)<-r_2(q)<-r_3(q)<... $ in the order of increasing absolute value we get $\lim_{N\to \infty}\frac{r_{N+1}(q)}{r_N(q)}=1/q.$ 

\noindent
{\rm  (ii)} Analogously, if $-r_1'<-r_2'<-r_3'<...$ stands for the real negative roots of $\Psi^\prime_u(q,u)$ or, in other words, for the  real  critical points of $\Psi(q,u)$ we get  $\lim_{N\to \infty}\frac{r'_{N+1}}{r'_N}=1/q.$

\noindent
{\rm (iii)} Finally, let $c_1,c_2,..., $ be the sequence of the critical values of $\Psi(q,u)$ attained at the real critical points $r_1',r_2',...$, then $\lim_{N\to \infty}\frac{c_N^2}{c_{N-1}c_{N+1}}=q.$ 

\end{conjecture}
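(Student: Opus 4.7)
The plan is to prove all three parts via a single rescaling argument. Substituting $u = -q^{-N}\rho$ into $\Psi(q,u) = \sum_{j \ge 0} q^{j(j+1)/2} u^j$ and using the identity
\begin{equation*}
\frac{j(j+1)}{2} - jN \;=\; -\frac{N(N-1)}{2} + \frac{k(k+1)}{2}, \qquad k := j-N,
\end{equation*}
one obtains
\begin{equation*}
\Psi(q,\,-q^{-N}\rho) \;=\; (-1)^N\rho^N\,q^{-N(N-1)/2}\,F_N(\rho), \qquad F_N(\rho)\,:=\,\sum_{k\ge -N}(-1)^k q^{k(k+1)/2}\rho^k.
\end{equation*}
Since $q^{k(k+1)/2}$ is Gaussian in $k$, $F_N$ converges uniformly on compacts of $\{\rho > 0\}$ to the bi-infinite sum $F(\rho) := \sum_{k \in \mathbb{Z}}(-1)^k q^{k(k+1)/2}\rho^k$, which the Jacobi triple product identifies with $-\rho^{-1}\prod_{n \ge 1}(1-q^n)(1-\rho q^{n-1})(1-q^n/\rho)$. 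The zeros of $F$ are therefore exactly $\rho \in \{q^n : n \in \mathbb{Z}\}$, all simple, and in particular $F(1)=0$ with $F'(1) = (q;q)_\infty^3 \neq 0$. Hurwitz's theorem then produces zeros $\rho_N \to 1$ of $F_N$, giving $r_N = q^{-N}\rho_N = q^{-N}(1+o(1))$ and $r_{N+1}(q)/r_N(q) \to 1/q$, settling~(i).

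For~(ii), the same substitution in $\Psi'_u$ yields
\begin{equation*}
\Psi'_u(q,\,-q^{-N}\rho) \;=\; (-1)^{N+1}\,q^N\rho^{N-1}\,q^{-N(N-1)/2}\bigl[NF_N(\rho) + \rho F_N'(\rho)\bigr],
\end{equation*}
where $\rho F_N'(\rho) \to G(\rho) := \sum_{k \in \mathbb{Z}}(-1)^k k\,q^{k(k+1)/2}\rho^k$. Hence a real critical point $\rho'_N$ of $\Psi$ (near $\rho=1$) satisfies $N F_N(\rho'_N) + \rho'_N F_N'(\rho'_N) = 0$, and Taylor-expanding $F_N(\rho) \approx F'(1)(\rho-1)$ gives $\rho'_N = 1 - G(1)/(NF'(1)) + o(1/N)$. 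Thus $r'_N = q^{-N}\rho'_N$ and $r'_{N+1}/r'_N \to 1/q$.

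For~(iii), substituting $\rho = \rho'_N$ back gives
\begin{equation*}
c_N \;=\; (-1)^N(\rho'_N)^N q^{-N(N-1)/2} F_N(\rho'_N) \;=\; (-1)^{N+1}\,\frac{G(1)}{N}\,q^{-N(N-1)/2}(\rho'_N)^N\bigl(1+o(1)\bigr),
\end{equation*}
using $F_N(\rho'_N) = -G(1)/N + O(1/N^2)$. The factor $(\rho'_N)^N$ converges to $\exp(-G(1)/F'(1))$, but the contributions $2N\log\rho'_N - (N-1)\log\rho'_{N-1} - (N+1)\log\rho'_{N+1}$ to the log of the double ratio cancel (each converges to the same limit), as do the $G(1)^2$ prefactors. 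The $1/N^2$ factors yield $\log((N^2-1)/N^2) \to 0$, while the powers of $q$ combine via the identity $2\cdot\tfrac{N(N-1)}{2} - \tfrac{(N-1)(N-2)}{2} - \tfrac{N(N+1)}{2} = -1$ to contribute exactly $\log q$. Hence $c_N^2/(c_{N-1}c_{N+1}) \to q$.

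The technical heart lies in making these expansions quantitative on a fixed neighborhood of $\rho=1$: uniform control of $F_N - F$ and $\rho F_N' - G$, plus a rigorous second-order Taylor remainder for $F_N$ at $\rho'_N$. Because the tails $\sum_{k<-N}(-1)^k q^{k(k+1)/2}\rho^k$ decay at super-geometric rate in $N$, these bounds are straightforward but tedious. The one nontrivial input is $G(1) \neq 0$ for $q\in(0,1)$, which ensures that the perturbation of $\rho'_N$ off $1$ has the expected order $1/N$; this is classical, since $G(1) = \sum_{k\ge 0}(-1)^k(2k+1)q^{k(k+1)/2}$ is, up to a power of $q$, the cube of the Dedekind $\eta$-function, hence positive on $(0,1)$.
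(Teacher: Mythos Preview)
The paper does not prove this statement: it is explicitly labelled a \emph{conjecture}, with the authors noting that ``since we do not need them we have not put enough effort in proving them.'' So there is no proof in the paper to compare against; your proposal is an attempt to settle an open item.

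Your strategy is sound and, with the details filled in, gives a correct proof. The rescaling $u=-q^{-N}\rho$ together with the Jacobi triple product identification of the limit $F(\rho)=-\rho^{-1}(q;q)_\infty(\rho;q)_\infty(q/\rho;q)_\infty$ is exactly the right tool: it shows that, on any fixed annulus around $\rho=1$, the functions $F_N$ have precisely one simple (hence real) zero for $N$ large, and translating back these zeros tile the far negative $u$-axis with one root per interval $[-q^{-N-1/2},-q^{-N+1/2}]$. Your observation that $F'(1)=G(1)=(q;q)_\infty^3$ is correct and makes the first-order expansion of $\rho'_N$ clean: $\rho'_N=1-1/N+o(1/N)$, which is what part~(iii) needs.

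Two points deserve tightening. First, the identification $r_N=q^{-N}\rho_N$ is only correct up to a constant index shift (you have not counted the small roots), but since the statement concerns the limit of a ratio this is harmless; still, you should say explicitly that Hurwitz on a fixed annulus around $\rho=1$ shows there is \emph{exactly one} real root of $\Psi$ in each interval $[-q^{-N-1/2},-q^{-N+1/2}]$ for $N\ge N_0$, so that consecutive large roots are exactly the $q^{-N}\rho_N$. Second, for part~(iii) you need $N\log\rho'_N$ to converge (not just $\rho'_N\to1$), which requires the $o(1/N)$ control on $\rho'_N-1+1/N$; you sketch this via Taylor expansion, but the bootstrap (first $\rho'_N\to1$ from Hurwitz, then $\rho'_N-1\sim-1/N$ from the equation $NF_N(\rho'_N)=-\rho'_N F_N'(\rho'_N)$) should be made explicit.
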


We call    the set of all $\hat q_i,\; |\hat q_i|<1$ such that the entire function $\Psi(\hat q, u)$ has a double root (denoted by $\hat u_i$), i.e. the pair  $(\hat q_i, \hat u_i)$Ê is critical {\em spectrum}  $\fS$ of partial theta function $\Psi(q,u)$.  We propose the following conjecture about the properties of $\Psi(q,u)$ for $q\in (0,1)$.

\medskip 

\begin{conjecture}\label{conj:infinit}  The spectrum $\fS$ contains  infinitely many positive values $0<\q=\hat q_1<\hat q_2<...<\hat q_N<... <1$, i.e.  there exists an infinite sequence $\{\hat q_i\},\;i=1,2,...$ of numbers in $(0,1)$ such that  for each positive integer $i$ the function $\Psi(\hat q_i,u)$ has a negative double root in the variable $u$.  
\end{conjecture}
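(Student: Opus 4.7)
The plan is to combine a monotonicity principle with a discreteness argument. Let $N(q)$ denote the number of non-real zeros of $\Psi(q, \cdot)$; by the Corollary to Lemma~\ref{lm:finite}, $N$ is finite-valued and non-decreasing on $(0,1)$, and by Theorem~\ref{th:limit} one has $N(q) = 0$ on $(0, \q]$. Since $N$ is integer-valued and non-decreasing, its jump set $J \subset (\q, 1)$ is at most countable and discrete in $(0,1)$. It suffices to show $J$ is infinite.

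At each $q_* \in J$, continuity of zeros in the parameter (Hurwitz's theorem applied to the family $\{\Psi(q, \cdot)\}_{q\in(0,1)}$) forces the jump to occur through a collision of complex conjugate zero pairs with the real axis: as $q \searrow q_*$, pairs of non-real zeros of $\Psi(q, \cdot)$ must coalesce on the real line, producing a real zero of multiplicity $\geq 2$ of $\Psi(q_*, \cdot)$. Since all coefficients of $\Psi$ are positive, any real zero is negative, so every $q_* \in J$ is a positive element of $\fS$ whose associated multiple zero is negative. Hence the conjecture reduces to proving
\begin{equation*}
\lim_{q \to 1^-} N(q) = +\infty,
\end{equation*}
after which monotonicity and integer-valuedness of $N$ force $J$ to be infinite.

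The proof of the unboundedness above is the main obstacle. The approach I would take is to approximate $\Psi(q, \cdot)$ by its Taylor sections $S_n(q, u) = \sum_{j=0}^n q^{j(j+1)/2} u^j$. At $q = 1$ one has $S_n(1, u) = (1 - u^{n+1})/(1 - u)$, whose $n$ zeros are the non-trivial $(n+1)$-th roots of unity, of which all but at most one are non-real and lie on the unit circle. By Hurwitz's theorem, for any compact annulus $\{r \leq |u| \leq R\}$ with $0 < r < 1 < R$, at least $n - O(1)$ non-real zeros of $S_n(q, \cdot)$ persist in this annulus for $q$ sufficiently close to $1$ (depending on $n$). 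The delicate point is transferring this count from sections to the full entire function $\Psi(q, \cdot)$: one must choose $n$ as a function of $q$ large enough to capture many non-real roots, but not so large that the tail $\sum_{j > n} q^{j(j+1)/2} u^j$ dominates $S_n(q, u)$ on the annulus and destroys zeros via Rouch\'e. This balance, sensitive to the rapid decay of $q^{j(j+1)/2}$ and to the growth of $\max_{|u|=R}|\Psi(q, u)|$ as $q \to 1^-$, is the technical crux of the argument, and a refinement in the spirit of the sign-change analysis in the proof of Lemma~\ref{lm:finite} seems to me the most promising route.
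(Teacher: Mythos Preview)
The statement you are attempting to prove is labelled \emph{Conjecture}~\ref{conj:infinit} in the paper, and the paper does \emph{not} prove it. Immediately after the statement the authors write that ``some numerical evidence for the validity of Conjecture~\ref{conj:infinit} was newly obtained'' and list the first 25 approximate values of $\hat q_i$; no argument is offered. So there is no ``paper's own proof'' to compare your attempt against.

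As for your proposal itself: the reduction is reasonable. The monotonicity of $N(q)$ comes from the Corollary to Lemma~\ref{lm:finite}, and the observation that a jump of $N$ forces a real multiple zero (hence a point of $\fS$, necessarily with negative $\hat u$ since all coefficients of $\Psi$ are positive) is essentially correct, though you should say a word about why zeros cannot simply escape to infinity rather than collide on the real axis; this can be handled because $\Psi(q,\cdot)$ is entire of order $0$ and Hurwitz applies on any fixed disk containing the relevant zeros.

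However, you yourself identify the real gap: you do not prove $\lim_{q\to 1^-} N(q)=+\infty$. Your sketch via sections has a genuine tension you do not resolve. At $q=1$ the series $\sum u^j$ has radius of convergence $1$, so on the annulus $\{r\le |u|\le R\}$ with $R>1$ the tail $\sum_{j>n} q^{j(j+1)/2}u^j$ is \emph{not} uniformly small as $q\to 1^-$; in fact it dominates. Conversely, if you stay inside $|u|<1$ you lose control of the location of the roots of $S_n(q,\cdot)$, which sit on $|u|=1$ at $q=1$. Choosing $n=n(q)$ to balance these effects is exactly the hard analytic step, and nothing in the paper (in particular not the sign-change argument of Lemma~\ref{lm:finite}, which goes in the opposite direction, bounding $N$ from \emph{above}) supplies it. Until that step is carried out, the argument is a plausible outline, not a proof; this is consistent with the authors leaving the statement as a conjecture.
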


Some numerical evidence for the validity of  Conjecture~\ref{conj:infinit} was newly obtained by two talented high school students A.~Broms and I.~Nilsson. Namely, they calculated the first 25 values of $\hat q_i$ with 12 decimal places. Their list with $6$ decimals is as follows:  
$0.309249, 0.516959, 0.630628,$ $ 0.701265, 0.749269, 0.783984, 
0.810251, 0.830816,$ $ 0.847353,$ $ 0.860942, 0.872305, 0.881949,$ $ 0.890237, 
0.897435, 0.903747, 0.909325,$ $0.914291,$ $ 0.918741,$ $ 0.922751, 0.926384, 0.929689, 0.932711, 0.935482, 0.938035, 0.940393$, see Fig.~\ref{fig58}. 

\medskip

\begin{problem} {\rm Assuming the Conjecture~\ref{conj:infinit} holds what is the asymptotics of the sequence $\{\hat q_i\}$ when $i\to \infty$ ? }
\end{problem}

Let us also mention  the following question posed  by Professor A.~Sokal.  

\begin{problem}
{\rm Is it true that $\fS$ is empty within the open disk $|q|<\q$ ?}
\end{problem}

%\begin{center}
%\includegraphics[scale=0.35]{fig19.pdf} \hskip 1cm \includegraphics[scale=0.35]{fig49.pdf}
%\end{center}
%\vskip 0.3cm

%\begin{center}
%\includegraphics[scale=0.3]{fig79.pdf} \hskip 1cm \includegraphics[scale=0.33]{fig99.pdf}
%\end{center}
%\vskip 0.3cm

%\begin{center}
%\includegraphics[scale=0.38]{fig399.pdf} \hskip 1cm \includegraphics[scale=0.35]{fig699.pdf}
%\end{center}
%\vskip 0.3cm

\medskip 

We finally  prove the remaining Proposition~\ref{prop:log. convexity}. 
Denote by $Pol_n^+\subset Pol_n$ the  set  of all monic degree $n$ polynomials  with all  positive 
coefficients. It contains $\Sigma_n$, the set of degree $n$ polynomials 
with all roots real negative. Let $p(x)=a_nx^n+\cdots +a_0$ be a polynomial.  

\begin{lemma}\label{lm:Hutch}
For any $k=1, 2, \ldots , n-1$ there exists a polynomial $p\in Pol_n^+$, 
$p\not\in \Sigma_n$, for which one 
has $a_i^2\geq 4a_{i-1}a_{i+1}$ for $i\neq k$ and $a_k^2<4a_{k-1}a_{k+1}$.
\end{lemma}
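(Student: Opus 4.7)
The plan is to construct the required polynomial as a one-parameter perturbation of the Hutchinson-tight polynomial, that is, the monic polynomial whose ratios $a_i^2/(a_{i-1}a_{i+1})$ all equal exactly $4$. First, I would set $p_0(x) := \sum_{i=0}^n c_i x^i$ with $c_i := 4^{(n(n-1)-i(i-1))/2}$. A direct computation shows $c_n = 1$ and $c_i^2/(c_{i-1}c_{i+1}) = 4$ for every $i = 1, \ldots, n-1$, so by Hutchinson's theorem $p_0 \in \Sigma_n$ and has only real negative roots.

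Next, I would deform $p_0$ by lowering just the coefficient at $x^k$: set $p_\epsilon(x) := p_0(x) - \epsilon x^k$ for $\epsilon \in (0, c_k)$. On this range $p_\epsilon$ remains monic with all positive coefficients, hence $p_\epsilon \in Pol_n^+$. A one-line check shows that the Hutchinson ratio at $i = k$ drops to $(c_k - \epsilon)^2/(c_{k-1}c_{k+1}) < 4$, the ratios at $i = k \pm 1$ (when these indices lie in $\{1,\ldots,n-1\}$) grow strictly above $4$ since $c_k - \epsilon$ appears only in the denominator, and every other Hutchinson ratio stays fixed at $4$. Thus $p_\epsilon$ satisfies $a_i^2 \geq 4 a_{i-1}a_{i+1}$ for every $i \neq k$ and $a_k^2 < 4 a_{k-1}a_{k+1}$ for any $\epsilon \in (0, c_k)$.

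The step I expect to be the most delicate is arranging in addition that $p_\epsilon \notin \Sigma_n$, and my plan is to push $\epsilon$ up close to its endpoint $c_k$. At $\epsilon = c_k$ itself the coefficient $a_k$ vanishes while every other coefficient remains strictly positive; such a polynomial cannot lie in $\Sigma_n$, because a monic polynomial with all real negative roots $-r_1, \ldots, -r_n$ (with $r_i > 0$) would have coefficients equal to the elementary symmetric functions of the $r_i$, all strictly positive. Hence $p_{c_k}$ has at least one non-real root, and since the set of polynomials with only real roots is closed in $Pol_n$, the same holds for every $p_\epsilon$ with $\epsilon$ in some left neighborhood of $c_k$. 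On that neighborhood $a_k = c_k - \epsilon > 0$ keeps $p_\epsilon$ inside $Pol_n^+$, while $p_\epsilon \notin \Sigma_n$. Any such $\epsilon$ yields the required polynomial, and the edge cases $k = 1$ or $k = n-1$ need no extra treatment since the absent "adjacent" inequality simply drops out of the check.
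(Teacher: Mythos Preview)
Your proof is correct, and it takes a genuinely different route from the paper's. The paper fixes a triple $(a_{k-1},a_k,a_{k+1})$ with $a_k^2<4a_{k-1}a_{k+1}$, so that the ``core'' polynomial $g=x^{k-1}(a_{k+1}x^2+a_kx+a_{k-1})$ already has a complex conjugate pair; it then attaches the remaining coefficients $a_i=b_i\varepsilon^{|i-k|\mp 1}$ so that all other Hutchinson inequalities hold, and for small $\varepsilon$ invokes Rouch\'e/Hurwitz to guarantee the complex pair persists in the full degree-$n$ polynomial. You instead start from the Hutchinson-tight polynomial (all ratios exactly $4$), decrease the single coefficient $a_k$, and observe that at the endpoint $a_k=0$ the polynomial cannot be real-rooted with negative roots (since that forces strictly positive elementary symmetric functions); the openness of the non-real-rooted locus then propagates this to nearby $\epsilon$. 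Your approach avoids Hurwitz entirely and needs only the (standard) closedness of the set of real-rooted polynomials, at the cost of a less explicit witness; the paper's construction, conversely, makes the complex pair visible from the outset and localizes it near the roots of an explicit quadratic.
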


\begin{proof} 
Fix the triple of coefficients $(a_{k-1},a_k,a_{k+1})$ such that 
$a_k^2<4a_{k-1}a_{k+1}$. Hence the polynomial 
$g:=x^{k-1}(a_{k+1}x^2+a_kx+a_{k-1})$ has a $(k-1)$-fold root at $0$ and a 
complex conjugate couple. 

For $i>k+1$ (resp. for $i<k-1$) 
set $a_i=b_i\varepsilon ^{i-k-1}$ (resp. $a_i=b_i\varepsilon ^{k-1-i}$), 
where $\varepsilon \in (0,1]$. 
Choose the coefficients $b_i>0$ such that the inequalities 
$a_i^2\geq 4a_{i-1}a_{i+1}$ hold for $i\neq k$ and $\varepsilon =1$. 
One can do this consecutively.  
E.g. one chooses $b_{k+2}>0$ sufficiently small so that 
$a_{k+1}^2\geq 4a_kb_{k+2}$, then 
$b_{k+3}$ such that $b_{k+2}^2\geq 4b_{k+3}a_{k+1}$ etc. Then in the same way 
$b_{k-2}$, $b_{k-3}$ etc. 

If the inequalities 
$a_i^2\geq 4a_{i-1}a_{i+1}$ ($i\neq k$) hold for $\varepsilon =1$, 
then they hold for any $\varepsilon \in (0,1]$ (to be checked directly). 

For $\varepsilon$ small enough the polynomial $p$ is a perturbation of 
the polynomial $g$. Fix two circles centered at the complex 
roots of $g$ and not intersecting the real axis. 
For $\varepsilon >0$ small enough one has $|p-g|<|g|$ on these circles. 
By the Hurwitz theorem $p$ has 
a root inside each of them. Hence $p\not\in \Sigma_n$.
\end{proof} 

\begin{figure}

\begin{center}
\includegraphics[scale=0.45]{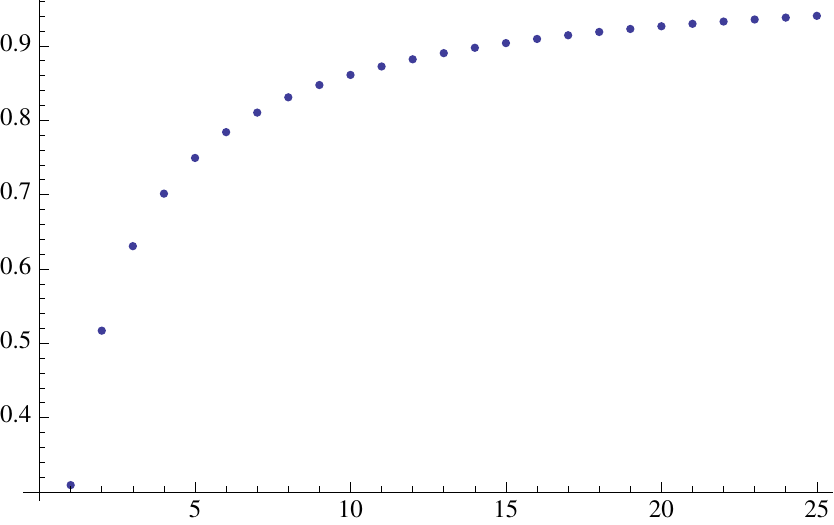}
\end{center}
\vskip 0.3cm

\caption {The first 25 values of $\hat q_i$.}
\label{fig58}
\end{figure}

\begin{proof}[Proof of Proposition~\ref{prop:log. convexity}]
To prove {\rm(i)} notice that by Lemma~5 of \cite{PRS}  Hutchinson's cone is the minimal polyhedral cone containing the set of the so-called sign-independently hyperbolic polynomials and is, on the other hand, contained in $\Delta_n\subset \Sigma_n$.  (A sign-independently hyperbolic polynomial is a hyperbolic polynomial with all positive coefficients and such that any sign change of its coefficient results into a hyperbolic polynomial, see \cite{PRS}.) 
  By Lemma~\ref{lm:Hutch} we see that an arbitrarily small parallel translation 'outward' of any of the hyperplanes defining the logarithmic image of Hutchinson's cone  results into getting outside the logarithmic image of the largest set $\Sigma_n$. Therefore, 
the logarithmic image of Hutchinson's cone is the largest polyhedral cone contained in $L\De_n$ and, analogously, in $L\Sigma_n$.   
 
 To prove {\rm(ii)} observe that Theorem~\ref{th:main} can be interpreted as follows. Consider Hutchinson's cone in the space $Pol_n^1$ and its image under the logarithmic map. Then it has a unique apex, i.e. the vertex where all inequalities become equalities.  Look for a parallel translation of the logarithmic image of Hutchinson's cone  containing the whole $L\De_n$. Then if you take the parallel translation when this apex is placed at the logarithmic image of $P_n$, then the whole $L\De_n$ is covered. Since the translated cone and $L\De_n$ still have a common point this position is minimal for containment. Exactly the same argument  using the known properties of Newton's inequalities tells us that placing the apex at the logarithmic image of $(x+1/n)^n$ does the job.   
\end{proof}

\section {An interesting iteration scheme}

\medskip

%\begin{proof}
%TODO
%\end{proof} 

%\begin{remark}Ê\rm {Similar results for the function $\chi(q,u)=\sum_{j=0}^\infty q^{\binom {j}{2}}u^j/j!$ which solves $\frac{\partial\chi(q,u)}{\partial u} = -\chi(q,qu),\; \chi(q,0) = 1$ are obtained in \cite {Lan}.  The function $\chi(q,u)$ appears often in combinatorics, graph theory and applied mathematics, see e.g. \cite{Is}  but it seems to have more complicated analytic properties than $\Psi(q,u)$. }
%\end{remark}Ê

The main ingredient  in the proof of  Theorem~\ref{th:limit} is  construction of the sequence $\{S_j\}$ starting from a hyperbolic polynomial $S_1$ with all simple roots the rightmost of which is at the origin. The next polynomial is obtained from the previous one by subtracting   its least in absolute value minimum followed by multiplication by $x$.   We have shown that after appropriate scaling the limiting entire function is a specialization of a partial theta function and also that the critical point at which the minimum is located asymptotically stabilizes.  These results naturally lead to the question about what happens if we consider a similar iterative procedure where the point at which we take a value to subtract is fixed from the beginning. A more detailed consideration leads to the following natural set-up. 
 
Given an initial analytic function $f_1(x)$ defined (at least) in a small open neighborhood of the interval $[-1,0]$ on the real line and a number $0<q<1$ define the sequence $\{f_j\}$ given by
\begin{equation}\label{eq:rec}
f_{j}(x)=\left(1+\frac{xf_{j-1}(qx)}{f_{j-1}(-q)}\right),\; j=2,3,....
\end{equation}
Obviously, $f_{j}(x)$ will be well-defined and analytic in the same neighborhood of $[-1,0]$ unless $-q$ is a root of $f_{j-1}(x)$. For generic choices of $f_1$ the latter circumstance never happens. One can easily check that  for a positive integer $j$  if $f_j$ is well-defined,  then it satisfies the normalization conditions $f_{j}(0)=1$ and $f_{j}(-1)=0$. 

Looking at the first  part of the proof of Theorem~\ref{th:relation} before we use the additional condition $V'(-1)=0$ one can see that  
the fixed points of \eqref{eq:rec}, i.e. the analytic functions satisfying on $[0,1]$ the functional relation 
$$F(x)=\left(1+\frac{xF(qx)}{F(-q)}\right),$$
with $q\in (0,1)$  fixed are exactly of the form $F(x)=\Psi(q,-\hat u x)$ where $\hat u$ is one of the real roots of the equation 
$\Psi(q,u)=0$. Here  $u$ is the variable. %The next claim seems to be highly plausible although we do not possess its complete proof at the moment, see Fig~\ref{fig1}.  

\begin{figure}

\begin{center}
\includegraphics[scale=0.35]{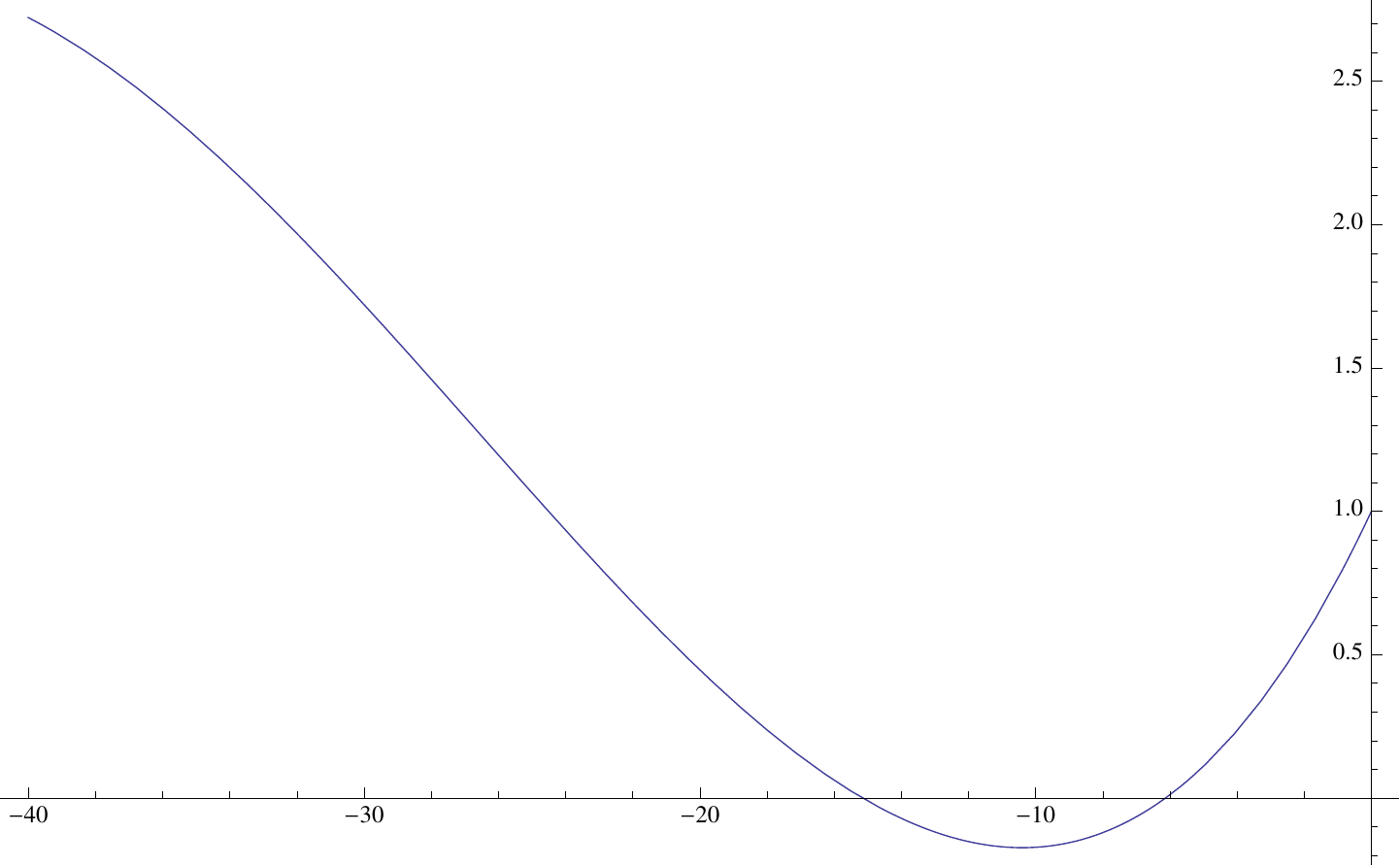} \hskip 1cm \includegraphics[scale=0.35]{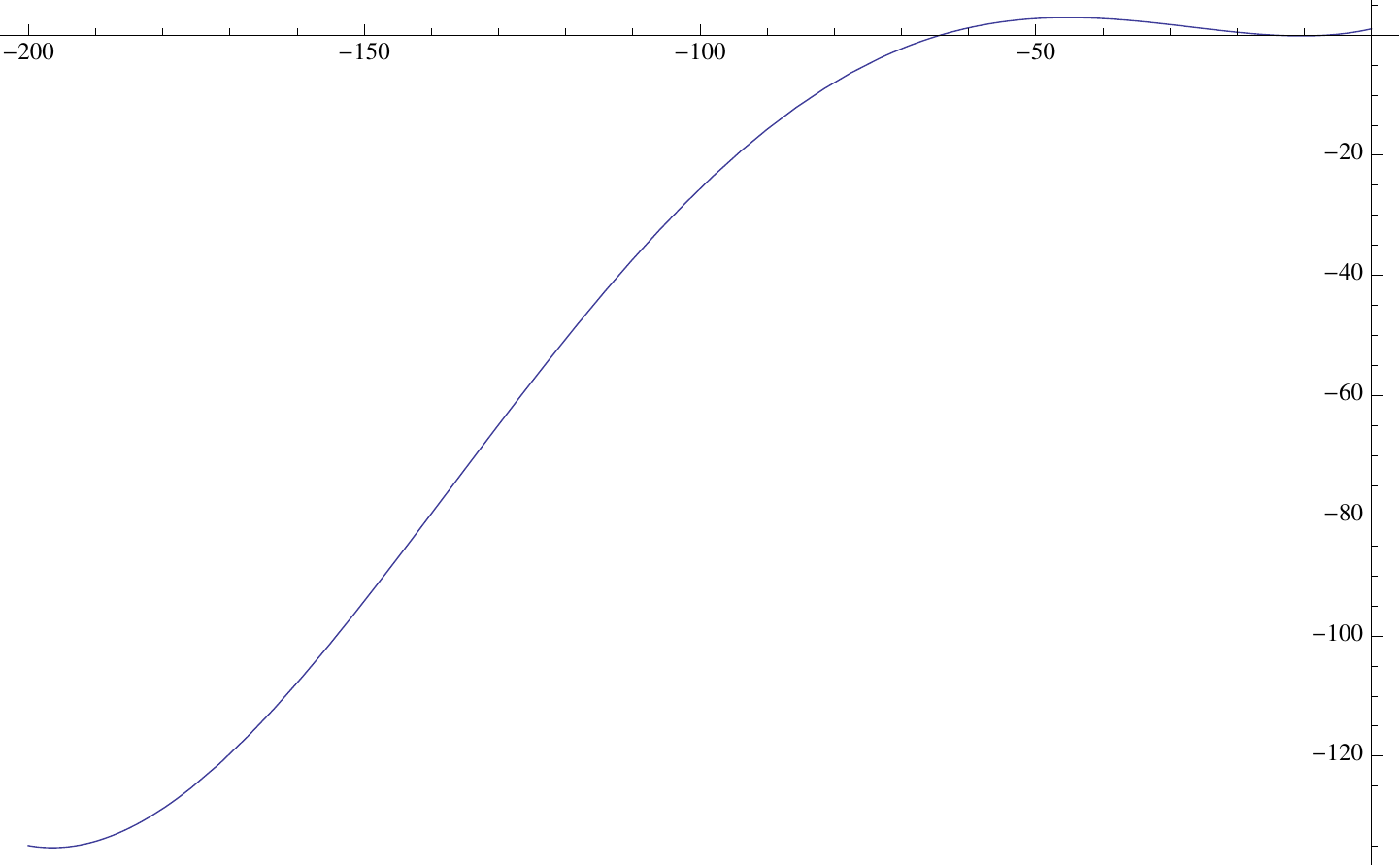}
\end{center}
\vskip 0.3cm

\caption {$\Psi(1/4,u)$ in the intervals $[-40,0]$ and $[-200,0]$.}
\label{fig1}
\end{figure}

Thus for any fixed $q\in (0,1)$  the  iteration scheme~\eqref{eq:rec} considered as a self-map of an appropriate space of analytic in an neighborhood of $[-1,0]$ functions has countably many fixed points.   At the moment  it is by no means clear what local properties these fixed points have. For example, for which $q$ and which of the above fixed points are repelling/attracting? Under which additional assumptions  on $f_1$ the sequence $\{f_j\}$ obtained via scheme~\eqref{eq:rec} converges? 

Our computer experiments and Theorem~~\ref{th:limit} suggest that the following statement should be true. 

\begin{conjecture} \label{th:iter} If $0<q\le \q$ and the initial function  of the form $f_1=(x+1)Q(x)$ where $Q(x)$ is a hyperbolic polynomial  with all negative roots smaller than $-1$, then the polynomial sequence $\{f_{j}(x)\}$ converges uniformly on $[-1,0]$ with any number of  derivatives to the function $\Psi(q,-u(q)x)$, where $u(q)$ is the negative solution of the equation $\Psi(q,u)=0$ with the minimal absolute value. 
\end{conjecture}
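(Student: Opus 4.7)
The plan is to convert the functional iteration \eqref{eq:rec} into an explicit scalar self-referential recursion for $d_j := f_j(-q)$, analyze its fixed points, and perform a linear stability analysis at the fixed point corresponding to the smallest-absolute-value root of $\Psi(q,\cdot)$. Writing $f_j(x)=\sum_{k\ge 0} c_k^{(j)} x^k$ and expanding \eqref{eq:rec} coefficientwise gives $c_0^{(j)}=1$ and $c_k^{(j)} = q^{k-1} c_{k-1}^{(j-1)}/d_{j-1}$ for $k\ge 1$. Iterating backward produces the clean formula $c_k^{(j)} = q^{k(k-1)/2}\big/\prod_{i=1}^k d_{j-i}$ (for $j>k$), and substitution into $d_j=f_j(-q)$ yields the scalar equation $d_j = \sum_{k\ge 0} (-1)^k q^{k(k+1)/2} \prod_{i=1}^{k} d_{j-i}^{-1}$. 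Its constant solutions $d^*$ are in bijection (by exactly the calculation used in the proof of Theorem~\ref{th:relation}) with the real roots $0>u_1(q)>u_2(q)>\cdots$ of $\Psi(q,u)=0$ via $d^{(n)}=-1/(q u_n(q))$; by Theorem~\ref{th:limit} these are all real and negative for $0<q\le \q$, and the target limit corresponds to $d^{(1)}=-1/(q u(q))>0$, the fixed point of largest absolute value.

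Under the hypothesis $f_1=(x+1)Q(x)$ with $Q$ hyperbolic, all roots $<-1$, and positive leading coefficient, one has $d_1=(1-q)Q(-q)>0$. I would argue inductively that $d_j>0$ for every $j$ and that $f_j$ retains a suitable hyperbolicity/positivity structure (positive Taylor coefficients, simple negative real zeros, the rightmost one pinned at $-1$). This keeps the iteration in the relevant real-analytic class and excludes the fixed points $d^{(n)}$ for $n\ge 2$ as possible limits, since they would correspond to $f_j$'s acquiring extra zeros in $(-1,0)$.

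The main technical step is to show that $d^{(1)}$ is attracting and that its basin contains the admissible initial data. Linearizing the scalar equation, $d_j=d^*+\eta_j$ gives (to first order) a constant-coefficient recursion $\eta_j=\sum_{i\ge 1}A_i\eta_{j-i}$ with $A_i=-(d^*)^{-1}\sum_{k\ge i}(-1)^k q^{k(k+1)/2}(d^*)^{-k}$; the superexponential decay of $q^{k(k+1)/2}$ makes the $A_i$ decay rapidly. Stability reduces to the characteristic equation $\sum_{i\ge 1}A_i\zeta^i=1$: at $d^{(1)}$ all roots should lie strictly outside the closed unit disk, while at each $d^{(n)}$, $n\ge 2$, at least one should lie inside. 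For small $q$ this is immediate (the leading root satisfies $\zeta\sim 1/q$); the main obstacle is a uniform lower bound on the spectral gap at $d^{(1)}$ for all $q\in(0,\q]$, since the gap could in principle degenerate as $q\to \q$. A natural route is to exploit the extremality of $d^{(1)}$ together with the coefficientwise monotonicity of the right-hand side of the scalar equation in each $d_{j-i}^{-1}$ to trap $d_j$ between $d^{(1)}$ and an explicit sub/super-solution. Once $d_j\to d^{(1)}$ is established, the coefficient formula gives $c_k^{(j)}\to q^{k(k+1)/2}(-u(q))^k$, i.e.\ the Taylor coefficients of $\Psi(q,-u(q)x)$, and their superexponential decay yields uniform convergence with all derivatives of $f_j$ on $[-1,0]$ (in fact on any compact subset of a fixed complex neighborhood of $[-1,0]$).
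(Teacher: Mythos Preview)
First, a framing point: the statement you are attempting is labeled a \emph{conjecture} in the paper; the authors offer only numerical evidence (Figures~\ref{fig4} and~\ref{fig3}) and explicitly leave it open. There is no proof in the paper to compare your attempt against.

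Your reduction is sound: the coefficient formula $c_k^{(j)}=q^{\binom{k}{2}}\big/\prod_{i=1}^{k}d_{j-i}$ (valid once $j-k\ge 2$) and the resulting scalar recursion for $d_j$ are correct, and the identification of its fixed points with roots of $\Psi(q,\cdot)$ via $d^*=-1/(qu)$ follows exactly as in the proof of Theorem~\ref{th:relation}. Two substantial gaps remain.

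The more serious one is at the endpoint $q=\q$. You say the spectral gap ``could in principle degenerate'' there; in fact it \emph{does}. By the very definition of $\q$ in Theorem~\ref{th:limit}, the smallest-modulus root $u(q)$ becomes the double root $\tilde u$ of $\Psi(\q,\cdot)$, so the two nearest fixed points $d^{(1)}$ and $d^{(2)}$ coalesce. Concretely, writing $H(d)=d-\sum_{k\ge 0}(-1)^kq^{k(k+1)/2}d^{-k}$ one checks $H(d)=d\,\Psi(q,-1/(qd))$, so a double zero of $\Psi(q,\cdot)$ forces a double zero of $H$, i.e.\ $H'(d^{(1)})=0$. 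But a direct computation gives $H'(d^*)=1-\sum_{i\ge 1}A_i$, hence $\sum_{i\ge 1}A_i=1$ and $\zeta=1$ is a root of your characteristic equation $\sum_{i\ge 1}A_i\zeta^i=1$. The linearization therefore has a neutral mode at $q=\q$, and no spectral-gap argument can cover that case; you would need a genuinely nonlinear (e.g.\ center-manifold or monotone-trapping) analysis, and your ``sub/super-solution'' remark is not developed enough to supply it.

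The second gap is global. Even for $0<q<\q$, linear stability at $d^{(1)}$ yields only local attraction; you still have to place the admissible initial data in the basin. Your inductive claim that each $f_j$ remains hyperbolic with positive Taylor coefficients and a unique zero in $[-1,0]$ is asserted without proof. Note also that the scalar recursion for $d_j$ is not autonomous for small $j$: the coefficient formula only stabilizes once $j>k$, so the early iterates carry the initial polynomial $f_1$ in their top coefficients. None of this is insurmountable for $q<\q$, but it is real work that neither you nor the paper have carried out.
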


  \begin{remark} \rm { Apparently under the condition  $0<q\le \q$ the attraction domain of the latter fixed point is much larger than $f_1$ of the form given in the above conjecture. In particular, iterations started with  $f_1=\sin {\pi x}$ converge very quickly  to the same limit. On the other hand,  if $q >\q$ numeric experiments show that iterations typically diverge, see Fig~3.  It might be that for $q >\q\;$ all the fixed points of ~\eqref{eq:rec} become repelling.  There are superficial similarities of the scheme~\eqref{eq:rec}  and the famous logistic map in dynamical systems which also depends crucially on the value of the additional parameter $q$.}
  
  \end{remark}

%\section{Final remarks} 

%\noindent 
%{\bf 1.}Ê Does $\fS$ contain any other points then those given in Lemma~\ref{pr:infinit}? 

%\noindent
%{\bf 2.}  What about logarithmic convexity of the discriminant? 
 
%\begin{problem} In what sense are  the sets $\De_n$ and $\Sigma_n$ logarithmically convex?

%\end{problem}

\begin{figure}

\begin{center}
\includegraphics[scale=0.4]{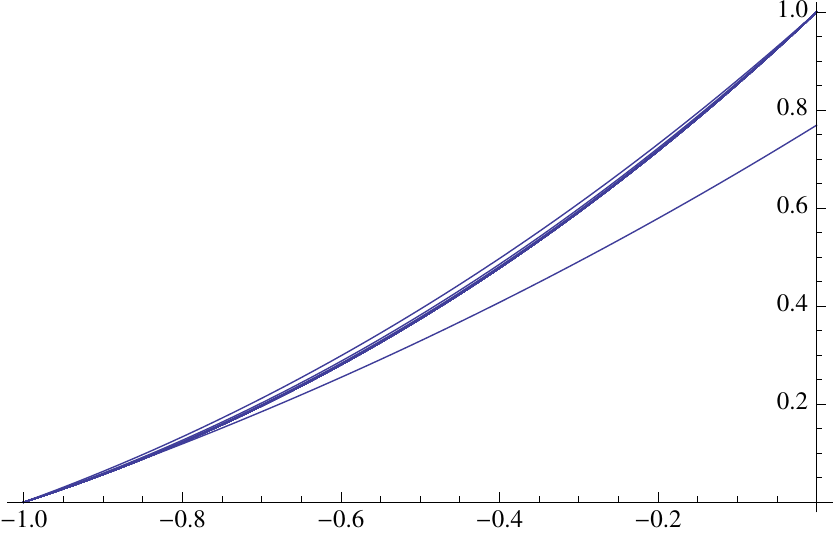} \hskip 1cm \includegraphics[scale=0.4]{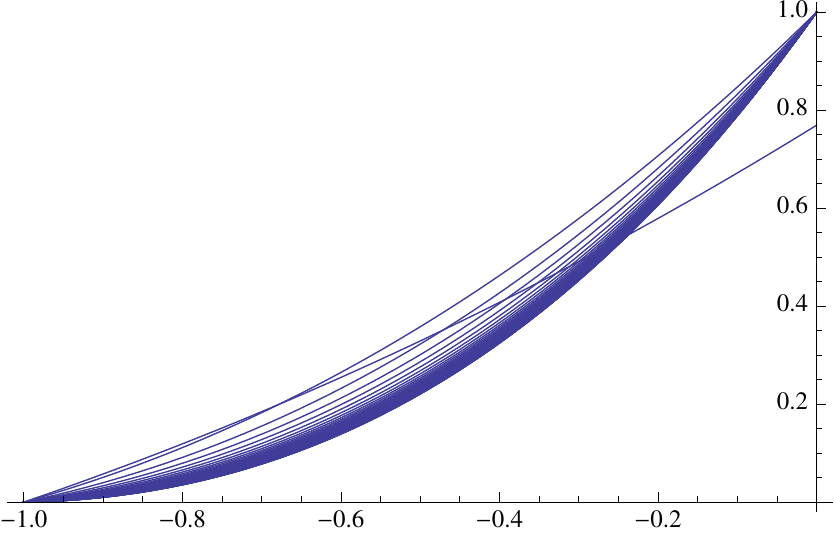}
\end{center}

\vskip 0.3cm

\caption {Convergence of iterations for $q=1/4$ (left) and the critical $\q$ (right).}
\label{fig4}
\end{figure}

  \begin{figure}

\begin{center}
\includegraphics[scale=0.4]{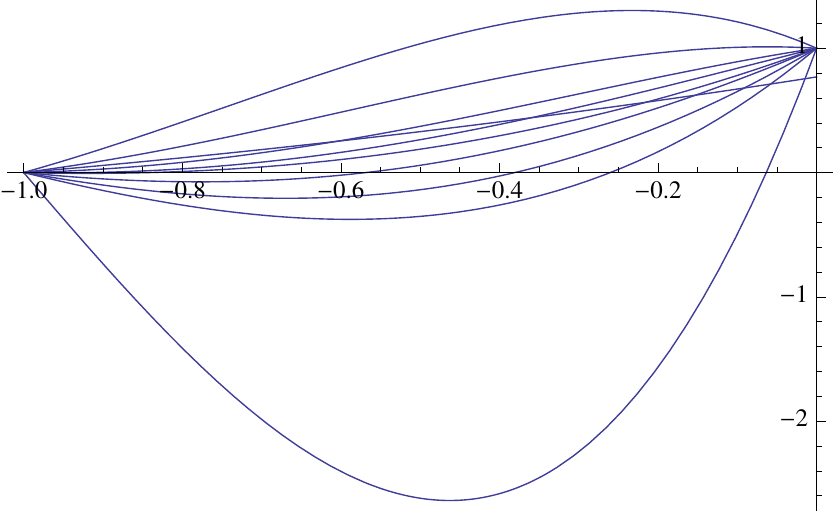} \hskip 1cm \includegraphics[scale=0.4]{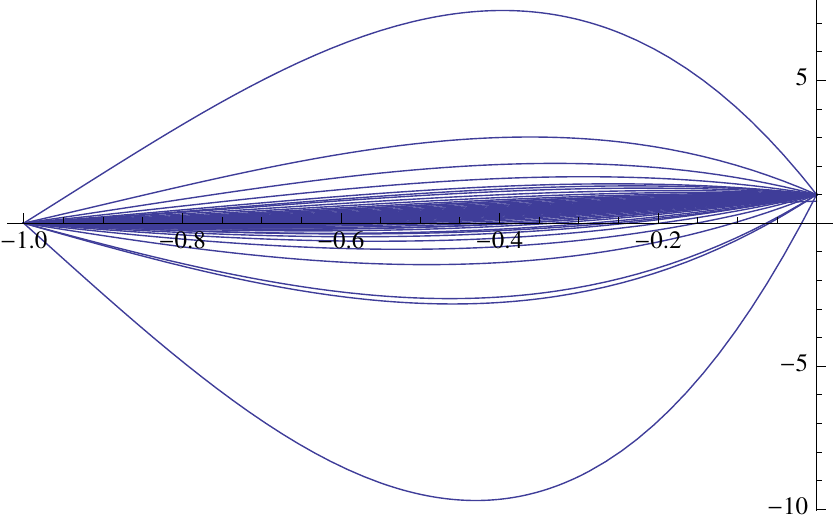}

\end{center}

\vskip 0.3cm

\caption {Divergence of iterations for $q=1/2$. The number of iterations on the left   is 10 and on the right is 50.}
\label{fig3}
\end{figure}

%************************* References ***********************************
%


\begin{thebibliography}{8}

\bibitem{An1} G.~E.~Andrews, Ramanujan's "lost" notebook. I. partial theta functions, Adv. Math. 41 (1981),
137--172.

\bibitem{An2} G. E. Andrews, An introduction to Ramanujan's "Lost" Notebook, Amer. Math. Monthly 86
(1979), 89--108.
%G.~E.~Andrews, RamanujanÕs "lost" notebook. II. #-Function expansions, Adv. Math. 41
%(1981), 173Ð185.

\bibitem{AnWa} G.~E.~Andrews and S.~O.~Warnaar, The product of partial theta functions, , Adv. in Appl. Math.
39 (2007), 116-120.

\bibitem{CC} T.~Craven, G.~Csordas,  Composition theorems, multiplier sequences and complex zero decreasing sequences. Value distribution theory and related topics, 131--166, Adv. Complex Anal. Appl., 3, Kluwer Acad. Publ., Boston, MA, 2004.

\bibitem{Han} D.~Handelman, Arguments of zeros of highly log concave polynomials,  arXiv:1009.6022.
 
 \bibitem{Ha}ÊG.~H.~Hardy, On the zeros of a class of integral functions, Messenger of Mathematics, 34 (1904), 97--101. 

\bibitem{Hu}ÊJ. I. Hutchinson, On a remarkable class of entire functions, Trans. Amer. Math. Soc. 25 (1923), pp. 325--332.

%\bibitem{Fisk}Ê S.~Fisk, {\em Polynomials, roots, and interlacing},  arXiv:math/0612833. 

%\bibitem{Is} A.~Iserles, On the generalized pantograph functional-differential equation, European J. Appl. Math. 4 (1993) 1--38.

\bibitem{Ku} D.~C.~Kurtz, A sufficient condition for all roots of a polynomial to be real, Amer. Math. Monthly 99, no. 3 (1992) 259--263. 

\bibitem{KLV1} O.~M.~Katkova, T.~Lobova, A.~M.Vishnyakova,  On power series having sections with only real zeros. Comput. Methods Funct. Theory 3 (2003), no. 1-2, 425--441.

\bibitem{KLV2} O.~M.~Katkova, T.~Lobova-Eisner, A.~M.Vishnyakova, On entire functions having Taylor sections with only real zeros. Mat. Fiz. Anal. Geom. 11 (2004), no. 4, 449--469.

\bibitem{Le}ÊB.~Ja.~Levin, Distribution of Zeros of Entire Functions, Transl. Math. Mono. Vol. 5, Amer. Math.
Soc., Providence, RI, 1964; revised ed. 1980.

\bibitem {La} ÊE.~Laguerre, Sur quelques points de la th\'eorie des  \'equations num\'eriques, Acta Math. 4 (1884), 97--120. 

%\bibitem{Lan} J.~K.~Langley, A certain functional-differential equation,  J. Math. Anal. Appl. 244 (2000), no. 2, 564--567.

\bibitem {Ni} C.~Niculescu, A new look at Newton's inequalities, J. Inequal. Pure Appl. Math. 1 (2000), no. 2, Article 17, 14 pp.

\bibitem{Ost} I.~V.~Ostrovskii, On zero distribution of sections and tails of   power series, Israel Math. Conf. Proceedings, 15 (2001), 297--310.

\bibitem{Pe} M.~Petrovitch, Une classe remarquable de s\'eries enti\`eres, Atti del IV Congresso Internationale dei Matematici, Rome (Ser. 1) {\bf 2} (1908), 36--43. 



\bibitem{PRS}ÊM.~Passare, J.~M.~Rojas,  B.~Shapiro,  New Multiplier Sequences via Discriminant Amoebae,  Moscow Math. J., to appear.  

\bibitem{PS} G. P\'olya and J. Schur, \"Uber zwei Arten von Faktorenfolgen in der Theorie der algebraischen
Gleichungen, J. Reine Angew. Math. 144 (1914), 89--113.

\bibitem{PSz} G. P\'olya, G. Szeg\"o, Problems and theorems in analysis , Vol. 1, Springer, Heidelberg 1976.

\bibitem{Ra} S.Ramanujan, The Lost Notebook and Other Unpublished Papers, Mathematical Works of Srinivasa Ramanujan,   Narosa Publishing House, New Delhi, (1988), 467 pp.

\bibitem{So} A.~Sokal, The leading root of the partial theta function, arXiv:1106.1003.

\bibitem{Zh} N.~Zheltukhina, On sections and tails of power series, Ph.D Thesis, Bilkent University (2002), 91 pp. 


\end{thebibliography}
\end{document}